\newtheorem*{rep@theorem}{\rep@title}
\newcommand{\newreptheorem}[2]{%
\newenvironment{rep#1}[1]{%
 \def\rep@title{#2 \ref{##1}}%
 \begin{rep@theorem}}%
 {\end{rep@theorem}}}
\newtheorem{theorem}{Theorem}[section]
\newtheorem{lemma}[theorem]{Lemma}
\newtheorem{proposition}[theorem]{Proposition}
\newtheorem{observation}[theorem]{Observation}
\theoremstyle{definition}
\newtheorem{remark}[theorem]{Remark}
\newtheorem{definition}[theorem]{Definition}
\newtheorem*{definition*}{Definition}
\newtheorem*{theorem*}{Theorem}
\newtheorem*{proposition*}{Proposition}
\newtheorem*{corollary*}{Corollary}
\newtheorem{conjecture}[theorem]{Conjecture}
\newcommand{\Var}{\mathrm{Var}}
\newcommand{\Cov}{\mathrm{Cov}}
\author{Kaarel H\" anni}
\address{Department of Mathematics, Massachusetts Institute of Technology, \mbox{Cambridge, MA 02139}}
\email{\href{mailto:kaarelh@mit.edu}{{\tt kaarelh@mit.edu}}}
\begin{document}

\title{Asymptotics of descent functions}
\date{August 2020}

\begin{abstract}
In 1916, MacMahon showed that permutations in $S_n$ with a fixed descent set $I$ are enumerated by a polynomial $d_I(n)$. Diaz-Lopez, Harris, Insko, Omar, and Sagan recently revived interest in this \emph{descent polynomial}, and suggested the direction of studying such enumerative questions for other consecutive patterns (descents being the consecutive pattern $21$). Zhu studied this question for the consecutive pattern $321$. We continue this line of work by studying the case of any consecutive pattern of the form $k,k-1,\ldots,1$, which we call a \emph{$k$-descent}. In this paper, we reduce the problem of determining the asymptotic number of permutations with a certain $k$-descent set to computing an explicit integral. We also prove an equidistribution theorem, showing that any two sparse $k$-descent sets are equally likely.

Counting the number of $k$-descent-avoiding permutations while conditioning on the length $n$ and first element $m$ simultaneously, one obtains a number triangle $f_k(m,n)$ with some useful properties. For $k=3$, the $m=1$ and $m=n$ diagonals are OEIS sequences A049774 and A080635. We prove a $k$th difference recurrence relation for entries of this number triangle. This also leads to an $O(n^2)$ algorithm for computing $k$-descent functions. 

Along the way to these results, we prove an explicit formula for the distribution of first elements of $k$-descent-avoiding permutations, as well as for the joint distribution of first and last elements. We also develop an understanding of discrete order statistics. In our approach, we combine algebraic, analytic, and probabilistic tools. A number of open problems are stated at the end.

\end{abstract}

\maketitle

\section{Introduction}


A permutation $w\in S_n$ is said to \emph{contain} the consecutive pattern $\pi\in S_k$ if there are consecutive indices $i,i+1,\ldots, i+k-1$ such that the relative ordering of $w(i),w(i+1),\ldots, w(i+k-1)$ is the same as the relative ordering of $\pi(1), \ldots, \pi(k)$. A permutation $w$ is said to \emph{avoid} the consecutive pattern $\pi$ if it does not contain $\pi$. The study of consecutive pattern avoidance was started by Elizalde and Noy in 2003 \cite{ELIZALDE2003110}, and has received a great amount of study since. In this paper, we are interested in a slightly different topic, namely the study of permutations containing a consecutive pattern at some fixed set of indices, continuing a line of inquiry started by MacMahon in 1916 for the case of descents \cite{macmahon}. In particular, we will be interested in the consecutive pattern  $k,k-1, \ldots ,2,1$, which we call a \emph{k-descent}. For a permutation $w\in S_n$ and $k\geq 2$, we let $D_k(w)$ be the set of starting points of $k$-descents in $w$. By a starting point of a $k$-descent in $w$, we mean an index $i\in [n]$ such that $w(i)>w(i+1)>\cdots>w(i+k-1)$. For instance, for $w=638541972\in S_9$, the set of starting points of $k$-descents in $w$ is $D_k(w)=\{3,4,7\}$. Our main objects of interest are defined as follows.
\begin{definition}
For $n\in \mathbb{Z}^+$ and $I\subseteq \mathbb{Z}^+$ a finite set, we let
\[\mathcal{D}_k(I,n)=\{w\in S_n\colon D_k(w)=I\}\hspace{5mm}\text{and}\hspace{5mm} d_k(I,n)=|\mathcal{D}_k(I,n)|.\]
We call $d_k(I,n)$ the \emph{$k$-descent function}. 
\end{definition}

The case of $k=2$ has received a considerable amount of interest. MacMahon \cite{macmahon} proved that for fixed $I$, $d_k(I,n)$ is a polynomial in $n$ (for all $n$ sufficiently large); the function $d_k(I,n)$ is known as the \emph{descent polynomial}. Inspired by the work of Billey, Burdzy, and Sagan \cite{billey} on the adjacent topic of peak polynomials, which has received a large amount of further study \cite{MR3463566}\cite{DAVIS20183249}\cite{DIAZLOPEZ201721}, Diaz-Lopez, Harris, Insko, Omar, and Sagan \cite{diaz} recently revived interest in this descent polynomial. Their 2019 paper led to a number of other recent works on descent polynomials \cite{gaetz2019qanalogs}\cite{jiradilok2019roots}\cite{Oguz2019DescentPP}. They also suggested the direction of studying similar questions for other consecutive patterns (Section 6 part (1) \cite{diaz}). Zhu picked up this study \cite{zhu2019enumerating} for the case of the consecutive pattern $321$, i.e. $k=3$ in our notation.

In this paper, we will focus on the study of asymptotics of $d_k(I,n)$ for $k\geq 3$. For our purposes, it will turn out to be particularly useful to partition the set $\mathcal{D}_k(I,n)$ according to the value of the first element of the permutation.
\begin{definition} For $m,n\in \mathbb{Z}^+$ with $1\leq m\leq n$ and $I\subseteq \mathbb{Z}^+$ a finite set, we let
\[\mathcal{D}_k(I,m,n)=\{w\in S_n\colon D_k(w)=I \text{ and } w(1)=m\}\hspace{5mm}\text{and}\hspace{5mm} d_k(I,m,n)=|\mathcal{D}_k(I,m,n)|.\]
We call $d_k(I,m,n)$ the \emph{parametrized $k$-descent function}. 
\end{definition}

We will particularly care about the special case of $I=\emptyset$, which is exactly the case of consecutive pattern avoidance. We introduce the following shorthand notations to avoid notational clutter.
\begin{definition}
For $m,n\in \mathbb{Z}^+$ with $1\leq m\leq n$, we let
\[f_k(n)=d_k(\emptyset, n)\hspace{5mm} \text{and}\hspace{5mm} f_k(m,n)=d_k(\emptyset, m,n).\]
\end{definition}

We now give an outline of our paper, stating our main results. In Section~\ref{sec:rec}, we prove a recurrence relation for $f_k(m,n)$ (Theorem~\ref{thm:mainrec}), which gives rise to a fast algorithm (thinking of $k$ as fixed, and $n,m$ as parameters) for computing $f_k(m,n)$. Along similar lines, we present a fast algorithm for computing $d_k(I,n)$ for any fixed $I$ (Theorem~\ref{thm:otherrec}). We also give a bivariate generating function for $f_3(m,n)$ (Proposition~\ref{prop:gen}), and discuss its generalization to other $k$. In Section~\ref{sec:nasy}, we review some results from the consecutive pattern avoidance literature on the asymptotics of $f_k(n)$. In Section~\ref{sec:mnasy}, we derive the asymptotics of $f_k(m,n)$, with the primary motivation being that this will be crucial in proving our other main results. However, this can also be seen as a statement about the distribution of the first element statistic among permutations avoiding $k$-descents, which in our opinion can be an interesting result in its own right. Perhaps surprisingly, in the following main theorem of Section~\ref{sec:mnasy}, we see that asymptotically, the distribution of $f_k(m,n)$ approaches an explicit smooth distribution $\varphi_k$ (under the right normalization).

\begin{reptheorem}{thm:fmnasy}
For all $k\geq 3$, there is a constant $r_k$ so that for all $m,n\in \mathbb{Z}^+$ with $1\leq m\leq n$,
\[\frac{nf_k(m,n)}{f_k(n)}=\varphi_k\left(\frac{m}{n}\right)\left(1+O_k\left(n^{-0.49}\right)\right),\]
where 
\[\varphi_k\left(x\right)=\frac{1}{r_k}\left(1-\frac{(x/r_k)^{k-1}}{(k-1)!}+\frac{(x/r_k)^k}{k!}-\frac{(x/r_k)^{2k-1}}{(2k-1)!}+\frac{(x/r_k)^{2k}}{(2k)!}-\cdots \right).\]
\end{reptheorem}

Using this, in Section~\ref{sec:dasy} we prove the following theorem on the asymptotics of $d_k(I,n)$ as a corollary of some more precise asymptotic results (Theorem~\ref{thm:precdasy} or Proposition~\ref{prop:niceform}):

\begin{reptheorem}{thm:dasy}
For any $k\geq 3$ and finite $I\subseteq \mathbb{Z}^+$, there is a constant $c_{I,k}$ such that
\[d_k(I,n)=c_{I,k} f_k(n)\left(1+O(n^{-0.49})\right).\]
\end{reptheorem}
In fact, the constant $c_{I,k}$ can be computed (or bounded) efficiently, as it is given by a certain integral formula. This directly settles a conjecture (Conjecture 6.5 \cite{zhu2019enumerating}) by Zhu, and lets us make partial progress towards Zhu's Down-Up-Down-Up Conjecture (Conjecture 6.2 \cite{zhu2019enumerating}). To summarize our approach to proving the above theorem in a few words, the two main ideas are that (1) counting permutations with a certain property is equivalent to finding the probability that a random permutation has a certain property, and that (2) for a certain property, this probability should approach a constant as $n\to \infty$. In Section~\ref{sec:joint}, we bootstrap from the results of Section~\ref{sec:mnasy} to get a description of the joint distribution of the first and last element of a $k$-descent avoiding permutation. Namely, the first and last element turn out to be (almost) independent -- see Theorem~\ref{thm:joint} for a precise statement. Finally, in Section~\ref{sec:equi}, we will use this joint distribution result to prove the following equidistribution theorem.

\begin{reptheorem}{thm:equi}
Fix $k\geq 3$ and $r\in \mathbb{Z}^+$. Let $n\in \mathbb{Z}^+$, $I_1,I_2\subseteq [n]$ with $|I_1|=|I_2|=r$, and no two elements of $I_1$ being closer to each other than $\sqrt{n}$, and similarly for $I_2$. Then 
\[\frac{d_k(I_1,n)}{d_k(I_2,n)}=1+O_{k,r}\left(n^{-\alpha}\right).\]
\end{reptheorem}

Restated another way, the content of the above theorem is that for any two sparse enough $k$-descent sets $I_1,I_2$ of the same size, the number of permutations in $S_n$ with descent set $I_1$ is (almost) the same as the number of permutations with descent set $I_2$. This resolves a conjecture (Conjecture 6.1 \cite{zhu2019enumerating}) by Zhu, stated for the special case of $k=3$ and singleton $I$. On the way to proving this, we use binomial coefficient sum manipulation and the second moment method to prove a concentration result for discrete order statistics.

We finish this introduction with a remark on a simple extension of our results.
\begin{remark}
Taking complements (the complement of $w\in S_n$ is $w^c\in S_n$ defined by $w^c(i)=n+1-w(i)$), one obtains results analogous to Theorem~\ref{thm:fmnasy}, Theorem~\ref{thm:dasy}, Theorem~\ref{thm:joint}, and Theorem~\ref{thm:equi} for the consecutive pattern $1,2,\ldots, k$.
\end{remark} 

\section{A recurrence relation for descent functions}\label{sec:rec}

We begin by giving an outline of this section. In Subsection~\ref{subsec:simprec}, we state and prove Theorem~\ref{thm:mainrec}, giving a simple recurrence relation for $f_k(m,n)$. Next, in Subsection~\ref{subsec:fast}, we discuss how this recurrence allows fast computation of $f_k(m,n)$. In Subsection~\ref{subsec:heuristic}, we take some time off to have a strictly heuristic discussion of what one would expect for the distribution of $f_k(m,n)$ just from the recurrence in Theorem~\ref{thm:mainrec}.

We come back to the rigorous path in Subsection~\ref{subsec:general}, where we state and prove Theorem~\ref{thm:otherrec}, which is a generalization of Theorem~\ref{thm:mainrec} for $\mathcal{D}_k(I,m,n)$ with any $I$ (Theorem~\ref{thm:mainrec} is the case of $I=\emptyset$). As before, we show how this allows fast computation of $\mathcal{D}_k(I,m,n)$. We finish this section with some discussion of generating functions in Subsection~\ref{subsec:generating}.

\subsection{A recurrence relation for $f_k(m,n)$}\label{subsec:simprec}
We start by defining a familiar function.

\begin{definition}
Let $\mathcal{X}$ be the set of all finite length sequences of reals (including the empty sequence). We define the \emph{difference operator} $\Delta\colon \mathcal{X}\to \mathcal{X}$. For $n\geq 2$, $\Delta$ is given by \[\Delta((a_1,a_2,\ldots, a_n))=(a_1-a_2, a_2-a_3, \ldots, a_{n-1}-a_n),\]
and we adopt the convention that for $n=1$ and $n=0$, $\Delta(A)=(\hspace{1mm})$, the empty sequence.

We say that the \emph{$k$th difference of $A$} is $\Delta^k(A)$, i.e., the $k$th iterate of the function $\Delta$ applied to $A$.
\end{definition}

We define multiplication of sequences by constants and addition of sequences of the same length componentwise, i.e., like vectors. It will be useful to note for later that the $k$th difference is linear:
\begin{itemize}
    \item for any $A\in \mathcal{X}$ and scalar $c\in \mathbb{Z}$, $\Delta^k(cA)=c\Delta^k(A)$;
    \item for any $A,B\in \mathcal{X}$ of the same length, $\Delta^k(A+B)=\Delta^k(A)+\Delta^k(B)$.
\end{itemize}

We now state the main recurrence theorem.
\begin{theorem}\label{thm:mainrec}
For any integers $k\geq 2$ and $n\geq 1$,
\[\Delta^k\left(\left(f_k(1,n+k),f_k(2,n+k),\ldots, f_k(n+k,n+k)\right)\right)=\left(f_k(1,n),f_k(2,n),\ldots,f_k(n,n)\right).\]
\end{theorem}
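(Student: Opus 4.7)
The plan is to prove the identity combinatorially by a signed double-count. Unpacking $\Delta^k$ as a binomial sum, the $j$-th entry of the claimed equation becomes
$$\sum_{i=0}^k (-1)^i \binom{k}{i}\, f_k(j+i, n+k) \;=\; f_k(j, n) \qquad (1 \le j \le n),$$
and it suffices to establish this for each $j$. I would interpret $\binom{k}{i}\, f_k(j+i, n+k)$ as the number of pairs $(w, S)$, where $w \in S_{n+k}$ is a $k$-descent-avoiding permutation starting at $j+i$ and $S$ is an $i$-subset of $[k]$; the left-hand side is then a signed enumeration of such pairs with weight $(-1)^{|S|}$.

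The heart of the argument will be a sign-reversing involution on these pairs whose fixed-point set is in bijection with $\mathcal{D}_k(\emptyset, j, n)$. My first guess for the fixed points is the pairs $(w, \emptyset)$ where $w \in S_{n+k}$ is the canonical extension of a permutation $v \in \mathcal{D}_k(\emptyset, j, n)$ obtained by keeping $v$ on the first $n$ positions and appending $n+1, n+2, \ldots, n+k$ in increasing order at the end. This extension plainly preserves $k$-descent-avoidance: the appended tail is internally increasing and each appended value exceeds every entry of $v$, so no new length-$k$ decreasing run can be formed. On non-fixed pairs the involution would toggle the smallest index of $[k]$ not yet accounted for by $S$, paired with a local exchange in $w$ of two consecutive values from the block $\{j, j+1, \ldots, j+k\}$ that shifts $w(1)$ by exactly one, thereby changing $|S|$ by one and flipping the sign.

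The main obstacle will be (i) defining the toggle so that the involution is well-defined on every non-fixed pair, and (ii) verifying that the accompanying exchange preserves $k$-descent-avoidance. The delicate issue in (ii) is that a swap inside the block $\{j, \ldots, j+k\}$ can create or destroy a decreasing run of length $k$ elsewhere in $w$, so the rule for the swap must be sensitive to the lengths of the maximal decreasing runs adjacent to the swap positions; getting this right will likely require a case analysis partitioning $w$ according to where the values of the block sit relative to long decreasing runs. If this bijective route proves too technical, a backup is algebraic: set $F_N(x) := \sum_{m=1}^N f_k(m, N)\, x^m$, recast the identity as the operator equation $(1 - x^{-1})^k F_{n+k}(x) \equiv F_n(x)$ modulo boundary terms in $x$, and derive that equation from a cluster-method or transfer-matrix decomposition of $k$-descent-avoiding permutations refined by their first element.
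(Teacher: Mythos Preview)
Your proposal has a genuine gap: the sign-reversing involution is never actually defined, and the sketch you give for it does not work as stated. The swap you describe---exchanging two consecutive values from the block $\{j,\ldots,j+k\}$ so as to shift $w(1)$ by one---is not an involution on $k$-descent-avoiding permutations. Decreasing $w(1)$ from $m$ to $m-1$ by swapping the values $m$ and $m-1$ is always safe (this is exactly the content of Proposition~\ref{prop:decr}), but the reverse swap can create a $k$-descent: if $m-1$ sits at some position $p$ with $w(p-1)>m>w(p+1)>\cdots>w(p+k-1)$, then after swapping, position $p-1$ begins a $k$-descent. So your proposed toggle pairs some $(w,S)$ with a $(w',S')$ where $w'$ is not $k$-descent-avoiding, and the involution is undefined there. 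You acknowledge this obstacle, but the case analysis you hint at is the entire content of the proof and is not supplied; nor is it clear that any clean local rule exists. Your choice of fixed points is also not forced by anything you have written---there is no mechanism in your toggle that singles out permutations ending in $n+1,n+2,\ldots,n+k$, so even if the swap were well-defined you would still need to explain why precisely those $(w,\emptyset)$ are fixed.

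The paper takes a completely different and more robust route. It first proves an explicit closed form (Proposition~\ref{prop:sum}) expressing $f_k(m,n+k)$ as $f_k(n+k-1)$ minus a sum over $u$ of $f_k(u,n)$ against binomial coefficients $\binom{m-1}{k-1}$ and $\binom{m-1-u}{k-1}$; this is obtained by conditioning on the restriction to the last $n$ positions and counting how many choices of the initial length-$(k-1)$ decreasing block are compatible. Then one applies $\Delta^k$ termwise using linearity: the constant term and the $\binom{m-1}{k-1}$ terms are polynomials in $m$ of degree at most $k-1$, so $\Delta^k$ annihilates them; the $\binom{m-1-u}{k-1}$ terms telescope down under repeated differencing, with the only survivor being $f_k(m,n)$. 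This approach never needs to compare individual permutations and entirely sidesteps the $k$-descent-preservation issue that blocks your involution. If you want a bijective proof, a more promising starting point than value-swapping would be to encode the initial decreasing run explicitly, mirroring the paper's Proposition~\ref{prop:sum}, and build the involution on the set of values in that run rather than on abstract subsets of $[k]$.
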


For the proof, we start from the following more complicated recursive formula.
\begin{proposition}\label{prop:sum}
For any $k\geq 2$ and any integers $m,n\in \mathbb{Z}^+$ with $1\leq m\leq n+k$,
\[f_k(m,n+k)=f_k(n+k-1)-\left(\sum_{u=1}^{n}f_k(u,n)\binom{m-1}{k-1}-\sum_{u=1}^{\min(m-1,n)}f_k(u,n)\binom{m-1-u}{k-1}\right).\]
\end{proposition}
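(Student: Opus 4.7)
The plan is to set up $f_k(m,n+k)=f_k(n+k-1)-B$ by a truncation bijection and inclusion--exclusion on the position-$1$ descent, then count the correction term $B$ directly and simplify with a hockey-stick identity.

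For the first step, sending $w\in S_{n+k}$ with $w(1)=m$ to the standardization $w^*\in S_{n+k-1}$ of $(w(2),\ldots,w(n+k))$ is a bijection onto $S_{n+k-1}$, and it shifts every $k$-descent of $w$ down by one position. Hence the number of such $w$ with no $k$-descent at any position in $\{2,\ldots,n+1\}$ equals $f_k(n+k-1)$. Subtracting those that additionally have a $k$-descent at position $1$ (i.e.\ $m>w(2)>\cdots>w(k)$) gives $f_k(m,n+k)=f_k(n+k-1)-B$, where $B$ counts $w$ with $w(1)=m$, $m>w(2)>\cdots>w(k)$, and no $k$-descent at positions $2,\ldots,n+1$.

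To count $B$, I would first observe a collapse: given the decrease $m>w(2)>\cdots>w(k)$, each potential $k$-descent at a position $j\in\{2,\ldots,k\}$ contains the inequality $w(k)>w(k+1)$, so once $w(k+1)>w(k)$ all of them are avoided automatically, and the remaining constraints at positions $k+1,\ldots,n+1$ involve only the tail $(w(k+1),\ldots,w(n+k))$. Writing $t=w(k)$, the other decreasing values form a $(k-2)$-subset of $\{t+1,\ldots,m-1\}$, contributing $\binom{m-1-t}{k-2}$; after standardizing the tail to $v\in S_n$, the constraints become that $v$ is $k$-descent-avoiding and $v_1\ge t$ (since the elements of the tail that are $\le t$ are exactly $\{1,\ldots,t-1\}$, so having value $>t$ is equivalent to having rank $\ge t$). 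This yields $B=\sum_{t=1}^{m-1}\binom{m-1-t}{k-2}\sum_{s=t}^{n}f_k(s,n)$.

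To finish, I would swap the order of summation and apply the hockey-stick identity $\sum_{t=1}^{T}\binom{m-1-t}{k-2}=\binom{m-1}{k-1}-\binom{m-1-T}{k-1}$ with $T=\min(s,m-1)$. For $s\ge m-1$ the second binomial vanishes, and for $s<m-1$ one can use the convention $\binom{m-1-s}{k-1}=0$ once $s\ge m$ to combine both cases into a single sum up to $\min(m-1,n)$. Factoring out $\binom{m-1}{k-1}$ and using $\sum_s f_k(s,n)=f_k(n)$ reproduces exactly the stated formula. The only step of substance is the collapse argument in the second paragraph, and even that is a short case check; I do not anticipate a real obstacle.
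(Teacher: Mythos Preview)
Your argument is correct and follows essentially the same route as the paper: both write $f_k(m,n+k)=f_k(n+k-1)-B$ via the deletion/standardization bijection, identify $B$ with permutations whose initial block $m>w(2)>\cdots>w(k)$ forces a $k$-descent at position~$1$, and observe that the only further obstruction across positions $2,\ldots,k$ collapses to the single inequality $w(k)<w(k+1)$.

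The one difference is in how $B$ is tallied. The paper conditions on $u$, the first letter of the standardized tail of length $n$, and for each $u$ directly counts the $(k-1)$-subsets of $[m-1]$ that avoid creating a $k$-descent in the tail; this yields $\binom{m-1}{k-1}-\binom{m-1-u}{k-1}$ immediately. You instead condition on $t=w(k)$, count the $(k-2)$ intermediate values and the tails with $v_1\ge t$, and then swap sums and apply a hockey-stick identity to reach the same expression. Your route is one step longer but perfectly sound; the paper's parametrization just skips the hockey-stick by counting the complementary ``bad'' subsets (those with $\min S>u$) in one stroke.
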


\begin{proof}
For $w\in \mathcal{D}_k(m,n+k)$, the first element of $w$ is $m$, and the restriction of $w$ to the last $n+k-1$ indices is an element of $\mathcal{D}_k\left(\emptyset, n+k-1\right)$. However, not every element of $\mathcal{D}_k(m,n+k-1)$ can be inserted here -- the elements that cannot be inserted are precisely those that start with a decreasing sequence of length $k-1$ starting from $m'<m$. Writing this out explicitly, we have that $f_k(m,n+k)=f_k(n+k-1)-|\mathcal{A}|$, where
\[\mathcal{A}=\{v\in \mathcal{D}_k\left(\emptyset, n+k-1\right)\colon m>v(1)>v(2)>\cdots >v(k-1)\}.\]

Fixing some $u\in [n]$, we now consider the number of elements $v\in \mathcal{A}$ such that the restriction of $v$ to the last $n$ indices starts with $u$. We can construct any such element uniquely by choosing the restriction to the last $n$ indices, for which there are $f_k(u,n)$ options, and then choosing values for the initial decreasing subsequence of length $k-1$. If $u\geq m$, then any set of values strictly less than $m$ will be a suitable choice of values for the initial decreasing subsequence of length $k-1$, so the number of options for the subsequence is $\binom{m-1}{k-1}$. If $u<m$, then the number of options for values of the decreasing subsequence in $v$ such that the value of the first element in $w$ is less than $m$ is still $\binom{m-1}{k-1}$, but out of those options, exactly $\binom{m-u-1}{k-1}$ give a decreasing subsequence of length $k$ in $v$ as well (here we use the fact that $v(k-1)>v(k)\iff v(k-1)\geq u$), and hence will give a permutation not in $\mathcal{A}$. Putting these cases together and summing over $u$, we get
\[|\mathcal{A}|=\sum_{u=1}^{n}f_k(u,n)\binom{m-1}{k-1}-\sum_{u=1}^{\min(m-1,n)}f_k(u,n)\binom{m-1-u}{k-1},\]
from which
\[f_k(m,n+k)=f_k(n+k-1)-\left(\sum_{u=1}^{n}f_k(u,n)\binom{m-1}{k-1}-\sum_{u=1}^{\min(m-1,n)}f_k(u,n)\binom{m-1-u}{k-1}\right),\]
which is the desired formula.
\end{proof}


Theorem~\ref{thm:mainrec} follows with some sum manipulation.
\begin{proof}[Proof of Theorem~\ref{thm:mainrec}]
By Proposition~\ref{prop:sum} and linearity of $\Delta^k$, 

\begin{align*}
    \Delta^k\left((f_k(m,n+k))_{m=1,\ldots,n+k}\right)&=\Delta^k\left((f_k(n+k-1))_{m=1,\ldots,n+k}\right)\\
    &\hphantom{=}-\sum_{u=1}^{n}f_k(u,n)\Delta_k\left(\left(\binom{m-1}{k-1}\right)_{m=1,\ldots,n+k}\right)\\
    &\hphantom{=}+\Delta^k\left(\left(\sum_{u=1}^{\min(m-1,n)}f_k(u,n)\binom{m-1-u}{k-1}\right)_{m=1,\ldots,n+k}\right).
\end{align*}
The first term is a constant sequence, so its $k$th difference is the all $0$s sequence $\mathbf{0}$. The second term is $\mathbf{0}$ as well, since \[\left(\binom{m-1}{k-1}\right)_{m=1,\ldots n+k}\xrightarrow{\Delta}\left(\binom{m-1}{k-1}\right)_{m=1,\ldots n+k-1}\xrightarrow{\Delta}\cdots \xrightarrow{\Delta}\left(\binom{m-1}{0}\right)_{m=1,\ldots n+1}\xrightarrow{\Delta} \mathbf{0}.\]
Here, we used the binomial coefficient identity $\binom{m}{k-1}=\binom{m-1}{k-1}+\binom{m-1}{k-2}$. For the third (final) sum, a similar thing happens for all terms except the last summand. We start by finding the first difference:
\[\Delta\left(\left(\sum_{u=1}^{\min(m-1,n)}f_k(u,n)\binom{m-1-u}{k-1}\right)_{m=1,\ldots,n+k}\right)\]\[=\left(\left(\sum_{u=1}^{\min(m-1,n)}f_k(u,n)\binom{m-1-u}{k-2}\right)+\mathbbm{1}_{m\leq n}f_k(m,n)\binom{0}{k-1}\right)_{m=1,\ldots,n+k-1}.\]
Note that $\binom{0}{k-1}=0$, so we are left with just the sum term. As we take successive differences (formally by induction), this pattern continues: after taking $\ell<k$ finite differences, we get the same sum with $k-1-\ell$ replacing $k-1$ in the binomial coefficients, as well as a leftover term of $\binom{0}{k-\ell}$, which is $0$. In particular, after doing this $k-1$ times, we have $k-1$ replaced by $0$ in the binomial coefficients, which means that the matching index terms cancel with the next $\Delta$. The only nonzero contribution comes from the leftover term of $f_k(m,n)\binom{0}{0}=f_k(m,n)$. Hence, we arrive at the desired
\[\Delta^k\left((f_k(m,n+k))_{m=1,\ldots,n+k}\right)=\left(f_k(m,n)\right)_{m=1,\ldots,n}.\]
\end{proof}

\subsection{Fast computation of $f_k(m,n)$}\label{subsec:fast}
Theorem~\ref{thm:mainrec} lets us compute the values of $f_k(m,n)$ for all $m,n$ with $1\leq m\leq n\leq N$, in $O(k N^2)$ arithmetic operations. That is, we are computing $\Theta(N^2)$ numbers in $O_k(N^2)$ operations. We think of these $f_k(m,n)$ as forming a triangle of integers, with the first row containing $f_k(1,1)$, the second containing $f_k(1,2), f_k(2,2)$, and so on. See Figure~\ref{fig:triangle} for a picture of this layout, and Figure~\ref{fig:fmn} for this triangle for $k=3$. The first $k$ rows are easy to find; namely, for any $1\leq m\leq n\leq k$ except $n=m=k$, $f_k(m,n)=(n-1)!$, and $f_k(k,k)=(k-1)!-1$ (so for $n\leq k$, each row can be computed with one multiplication from the previous row). To find subsequent rows one by one, we use the recurrence from Theorem~\ref{thm:mainrec}. Namely, when finding the $(n+k)$th row, we look at the $n$th row, which forms its $k$th difference. We now compute $k$ antidifferences (inverse finite differences) successively. We think of this as first laying out (in a new triangle) the $n$th row, which is the $k$th difference. We now find the $(k-1)$th difference in the row below it, by noting that the last entry in the $(k-1)$th difference sequence is $0$ (one can observe this in the proof of Theorem~\ref{thm:mainrec}, noting that the sums cancel), and then filling out the rest of the terms using the row above which is its first difference. For instance, the $n$th element of the second row is the last element of the second row minus the $n$th entry in the row above; the $(n-1)$th element of the second row is the $n$th element of the second row minus the $(n-1)$th entry in the row above; and so on, filling the second row backwards. We then repeat for the third row, using the fact that the second row is its first difference, except now we start from the first term being $0$ -- this follows from the fact that $f_k(1,n+k)=f_k(2,n+k)=\cdots=f_k(k-1,n+k)=f(n+k-1)$. For the fourth row, the first element is again $0$, and we fill it out using the third row as its first difference. In fact, the first element is $0$ for the third up to $k$th row, and we fill these out one by one. Each time, we only need $\leq n+k$ additions to find the terms in a row. For the $(k+1)$th row, we have that the first element is $f_k(1,n+k)=f_k(n+k-1)$ (this is easy to see combinatorially, by noting that if the first element is $1$, the restriction to the last $n+k-1$ elements can be any permutation in $\mathcal{D}_k(\emptyset,n+k-1)$) which can be computed as the sum of the elements of the previous row of our triangle, but the rest of the procedure is the same as for previous rows. All in all, given previous rows of the triangle, we have computed the $f_k(1,n+k),f_k(2,n+k),\ldots, f_k(n+k,n+k)$ row of our triangle in $\Theta(nk)$ additional arithmetic operations. For the case $k=3$, this process is depicted in Figure~\ref{fig:comp}. 

\begin{figure}
    \centering
    \begin{tikzpicture}[scale=1.5]
\foreach \n in {1,...,4} {
  \foreach \k in {1,...,\n} {
    \node at (\k-\n/2,-0.7*\n) {$f_k(\k,\n)$};
  }
}
\end{tikzpicture}
    \caption{It can be convenient to think of $f_k(m,n)$ as occupying such a triangle.}
    \label{fig:triangle}
\end{figure}

\begin{figure}
    \centering
    \begin{tikzpicture}
    
    \node at (1-1/2,-0.7*1) {$1$};
    \node at (1-2/2,-0.7*2) {$1$};
    \node at (2-2/2,-0.7*2) {$1$};
    \node at (1-3/2,-0.7*3) {$2$};
    \node at (2-3/2,-0.7*3) {$2$};
    \node at (3-3/2,-0.7*3) {$1$};
    \node at (1-4/2,-0.7*4) {$5$};
    \node at (2-4/2,-0.7*4) {$5$};
    \node at (3-4/2,-0.7*4) {$4$};
    \node at (4-4/2,-0.7*4) {$3$};
    \node at (1-5/2,-0.7*5) {$17$};
    \node at (2-5/2,-0.7*5) {$17$};
    \node at (3-5/2,-0.7*5) {$15$};
    \node at (4-5/2,-0.7*5) {$12$};
    \node at (5-5/2,-0.7*5) {$9$};
    \node at (1-6/2,-0.7*6) {$70$};
    \node at (2-6/2,-0.7*6) {$70$};
    \node at (3-6/2,-0.7*6) {$65$};
    \node at (4-6/2,-0.7*6) {$57$};
    \node at (5-6/2,-0.7*6) {$48$};
    \node at (6-6/2,-0.7*6) {$39$};
    \node at (1-7/2,-0.7*7) {$349$};
    \node at (2-7/2,-0.7*7) {$349$};
    \node at (3-7/2,-0.7*7) {$332$};
    \node at (4-7/2,-0.7*7) {$303$};
    \node at (5-7/2,-0.7*7) {$267$};
    \node at (6-7/2,-0.7*7) {$228$};
    \node at (7-7/2,-0.7*7) {$189$};
    
\end{tikzpicture}
    \caption{The triangle from Figure~\ref{fig:triangle} for $k=3$}
    \label{fig:fmn}
\end{figure}

\begin{figure}
    \centering
    \begin{tikzpicture}
    
    \node at (1-8/2,-0.7*5+0.02) {\color{green} $n$th row:};
    \node at (2-12/2,-0.7*6+0.02) {\color{blue} $1$st antidifference:};
    \node at (3-15/2,-0.7*7+0.02) {\color{blue} $2$nd antidifference:};
    \node at (3-19/2,-0.7*8+0.02) {{\color{green}$(n+3)$th row} $=$ {\color{blue} $3$rd antidifference:}}; 
    
    \node at (1-5/2,-0.7*5) {$17$};
    \node at (2-5/2,-0.7*5) {$17$};
    \node at (3-5/2,-0.7*5) {$15$};
    \node at (4-5/2,-0.7*5) {$12$};
    \node at (5-5/2,-0.7*5) {$9$};
    \node at (1-6/2,-0.7*6) {$-70$};
    \node at (2-6/2,-0.7*6) {$-53$};
    \node at (3-6/2,-0.7*6) {$-36$};
    \node at (4-6/2,-0.7*6) {$-21$};
    \node at (5-6/2,-0.7*6) {$-9$};
    \node at (6-6/2,-0.7*6) {$\color{red}0$};
    \node at (1-7/2,-0.7*7) {$\color{red}0$};
    \node at (2-7/2,-0.7*7) {$-70$};
    \node at (3-7/2,-0.7*7) {$-123$};
    \node at (4-7/2,-0.7*7) {$-159$};
    \node at (5-7/2,-0.7*7) {$-180$};
    \node at (6-7/2,-0.7*7) {$-189$};
    \node at (7-7/2,-0.7*7) {$-189$};
    \node at (1-8/2,-0.7*8) {$\color{red}2017$};
    \node at (2-8/2,-0.7*8) {$2017$};
    \node at (3-8/2,-0.7*8) {$1947$};
    \node at (4-8/2,-0.7*8) {$1824$};
    \node at (5-8/2,-0.7*8) {$1665$};
    \node at (6-8/2,-0.7*8) {$1485$};
    \node at (7-8/2,-0.7*8) {$1296$};
    \node at (8-8/2,-0.7*8) {$1107$};
    
\end{tikzpicture}
    \caption{For $k=3$ and $n=5$, example computation of the $n+3$th row given the $n$th row and the sum of the $(n+2)$th row, here $k=3$ and $n=8$. For each row, the first entry that is assigned to it is in red. The rest of the entries are computed by taking the antidifference of the row above. This gives that the next row of the triangle in Figure~\ref{fig:fmn} is $2017,2017,1947,1824,1665,1485,1296,1107$.}
    \label{fig:comp}
\end{figure}

\subsection{A heuristic calculation of the distribution of $f_k(m,n)$}\label{subsec:heuristic}
In this subsection, we will make a non-rigorous calculation for the distribution of $f_k(m,n)$. For this subsection only, let's assume that $f_k(m,n)$ is asymptotically given by some distribution, in the sense that there is $\varphi_k\colon [0,1]\to \mathbb{R}$ such that $f_k(m,n)$ is close to $\frac{1}{n}\varphi_k\left(\frac{m}{n}\right) f_k(n)$. Note that if $f_k(m,n)$ is to approach some continuous distribution on $[0,1]$, then the normalization $\frac{1}{n}$ is needed in this statement, as the entire mass $f_k(n)$ is divided between the $n$ values $1,\ldots, n$ for $m$. Geometrically, if the values $\frac{f_k(m,n)}{f(n)}$ are supposed to approach areas of columns of width $\frac{1}{n}$ under some density function $\varphi_k$ supported on $[0,1]$, then the area of a column should be $\frac{1}{n}\varphi_k\left(\frac{m}{n}\right)f_k(n)$. 

Under this assumption for the distribution of $f_k(m,n)$, let's see what we can heuristically make of the fact that this distribution should be stable under the finite difference recurrence in Theorem~\ref{thm:mainrec}. Each finite difference can (heuristically) be approximated by the derivative times the gap $\frac{1}{n}$. So heuristically and ignoring terms we anticipate to be lower order, e.g. treating $\frac{1}{n-k}$ as essentially $\frac{1}{n}$, we get \[\frac{d^k \varphi_k(x)}{(dx)^k}\frac{1}{n^k}f_k(n)\approx \varphi_k(x)f_k(n-k).\]

From previous work (see Section~\ref{sec:nasy} and Theorem~\ref{thm:nasy} in particular), it is known that $\frac{f_k(n)}{f_k(n-k)}\approx r_k^k n^k$ for some constant $r_k>0$. Plugging this into our heuristic calculation, we get
\[\frac{d^k \varphi_k(x)}{(dx)^k}\approx \varphi_k(x).\]
Note that 
\[\frac{d^k \varphi_k(x)}{(dx)^k}= r_k^k\varphi_k(x)\]
is a $k$th order differential equation. The boundary conditions of the recurrence given in Subsection~\ref{subsec:fast} suggest the boundary conditions $\varphi'(0)=0, \varphi''(0)=0,\ldots, \varphi^{(k-2)}(0)=0$, and $\varphi^{k-1}(1)=0$, and there is also the normalization $\int_{0}^1\varphi_k(x) dx=1$. We have a $k$th order equation with $k$ boundary conditions, so we would expect there to be a unique solution, although even this would require work to show rigorously.

But for instance for $k=3$, there is indeed a unique solution, and this turns out to be exactly what we will later (in Theorem~\ref{thm:fmnasy}) rigorously find to be the distribution of $f_3(m,n)$. For any $k$, what we will rigorously find to be the distribution of $f_k(m,n)$ satisfies this differential equation with these boundary conditions (so if the solution is unique, then the unique solution of this differential equation is indeed the distribution of $f_k(m,n)$). However, our proof of the distribution theorem (Theorem~\ref{thm:fmnasy}) in Section~\ref{sec:mnasy} is along very different lines. It remains open whether the approach described in this subsection can somehow be made rigorous. Perhaps one could define some operators on a suitable space of functions, and show convergence to a fixed point. However, we have not been able to carry this out.

\subsection{The case of general $I$}\label{subsec:general}

In fact, a similar line of reasoning as in Subsection~\ref{subsec:simprec} works for any $k$-descent set $I$, as long as the part of the permutation where the recurrence is derived is away from $I$. To make this work, we flip our permutations to bring all the $k$-descents to the end, and get the recurrence from the first $k$ indices. We start with the following definition.

\begin{definition}
For $I=\{i_1,\ldots, i_\ell\}\subseteq \mathbb{Z}^+$ and $n\geq \max(I)+k-1$, we define the \emph{$n$-reverse of $I$} to be
\[r_n(I)=\{n+2-k-i_1,\ldots, n+2-k-i_\ell\}.\]
\end{definition}

We have the following simple observation, with the proof omitted. (Reminder: $D_k(w)$ is the set of starting indices of $k$-descents in $w$.)

\begin{observation}
For $w\in S_n$ and letting $\mathrm{rc}(w)\in S_n$ denote the reverse-complement of $w$, i.e. $\left(\mathrm{rc}(w)\right)(i)=n+1-w(n+1-i)$, we have
\[D_k(\mathrm{rc}(w))=r_n(D_k(w)).\]
\end{observation}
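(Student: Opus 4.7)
The plan is to prove the observation by a direct index-chasing calculation, showing that the statement $j\in D_k(\mathrm{rc}(w))$ is logically equivalent, after unfolding the definition of $\mathrm{rc}$, to the statement $n+2-k-j\in D_k(w)$. Since $r_n$ is defined exactly as the map $i\mapsto n+2-k-i$ applied elementwise to a set, this equivalence is what we want.

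Concretely, I would begin by writing out what it means for $j$ to be a starting index of a $k$-descent of $\mathrm{rc}(w)$:
\[\mathrm{rc}(w)(j)>\mathrm{rc}(w)(j+1)>\cdots>\mathrm{rc}(w)(j+k-1).\]
Substituting $\mathrm{rc}(w)(i)=n+1-w(n+1-i)$ and cancelling the constant $n+1$, the inequalities become
\[w(n+1-j)<w(n-j)<\cdots<w(n+2-k-j),\]
since a subtraction flips every inequality. Reading this chain from right to left and setting $i := n+2-k-j$, so that the successive arguments of $w$ are $i,i+1,\ldots,i+k-1$, it says exactly
\[w(i)>w(i+1)>\cdots>w(i+k-1),\]
i.e.\ $i\in D_k(w)$. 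Thus $j\in D_k(\mathrm{rc}(w))\iff n+2-k-j\in D_k(w)$.

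Finally, one checks the range constraint: for $w\in S_n$ and a genuine $k$-descent starting at $i$, the argument $i+k-1$ must be at most $n$, so $i\leq n+1-k$; the corresponding $j=n+2-k-i$ then lies in $[1,n+1-k]$, and symmetrically. In particular the hypothesis $n\geq \max(I)+k-1$ used in the definition of $r_n$ is exactly the condition that $r_n$ preserves the set of valid $k$-descent starting indices, so no element is lost. The main obstacle here is purely notational — making sure the off-by-one bookkeeping in the substitution $i\leftrightarrow n+2-k-j$ is done correctly — and there is no genuine difficulty beyond that.
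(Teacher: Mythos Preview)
Your proof is correct; the index-chasing argument you give is exactly the natural one, and the paper in fact omits the proof entirely (stating it as an observation ``with the proof omitted''), so there is nothing further to compare.
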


The above observation together with the fact that $\mathrm{rc}\colon S_n\to S_n$ is a bijection prove the following remark.
\begin{remark}\label{rmk:bij}
For $k\in \mathbb{Z}^+$, finite $I\subseteq \mathbb{Z}^+$, and $n\geq \max(I)+k-1$,
\[d_k(I,n)=d_k(r_n(I),n).\]
\end{remark}

With this notation, we can state an analog of Theorem~\ref{thm:mainrec} for arbitrary $I$. 

\begin{theorem}\label{thm:otherrec}
For $k\geq 2$, finite $I\subseteq \mathbb{Z}^+$, and $n\geq \max(I\cup \{0\})+k-1$,
\[\Delta^k\left(\left(d_k(r_{n+k}(I),1,n+k),d_k(r_{n+k}(I),2,n+k)\ldots, d_k(r_{n+k}(I),n+k,n+k)\right)\right)\]\[=\left(d_k(r_n(I),1,n),d_k(r_n(I),2,n),\ldots,d_k(r_n(I),n,n)\right).\]
\end{theorem}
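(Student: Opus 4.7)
The plan is to mirror the proof of Theorem~\ref{thm:mainrec} by first establishing the natural analog of Proposition~\ref{prop:sum}, namely
\begin{align*}
d_k(r_{n+k}(I), m, n+k) &= d_k(r_{n+k-1}(I), n+k-1) \\
&\hphantom{=} - \left(\sum_{u=1}^{n} d_k(r_n(I), u, n) \binom{m-1}{k-1} - \sum_{u=1}^{\min(m-1,n)} d_k(r_n(I), u, n) \binom{m-1-u}{k-1}\right),
\end{align*}
and then applying $\Delta^k$ to both sides as a function of $m$. Once this recurrence is in hand, the $\Delta^k$ manipulation is verbatim that in Theorem~\ref{thm:mainrec}: the binomial-coefficient-difference identities and the telescoping do not care about what the coefficients $d_k(r_n(I),u,n)$ actually are. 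So essentially all the work is in establishing the recurrence.

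To set up that recurrence, I associate to each $w \in \mathcal{D}_k(r_{n+k}(I), m, n+k)$ the pattern $\tilde{v} \in S_{n+k-1}$ of the restriction $(w(2),\ldots,w(n+k))$. The key preliminary calculation is that $r_{n+k}(I) = \{n+2-i : i \in I\}$, whose smallest element is $n+2-\max(I) \geq k+1$ under the hypothesis $n \geq \max(I)+k-1$; hence $1 \notin r_{n+k}(I)$, so the index shift $j \mapsto j-1$ puts $D_k(w)$ in bijection with $D_k(\tilde v)$, and $D_k(\tilde{v}) = r_{n+k-1}(I)$. Conversely, any $\tilde{v} \in \mathcal{D}_k(r_{n+k-1}(I), n+k-1)$ together with the value $m$ yields a unique candidate $w$, whose only failure mode is accidentally creating a $k$-descent at position $1$. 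This gives $d_k(r_{n+k}(I),m,n+k) = d_k(r_{n+k-1}(I),n+k-1) - |\mathcal{A}_I|$ for
\[\mathcal{A}_I = \{\tilde v \in \mathcal{D}_k(r_{n+k-1}(I), n+k-1) : m > \tilde v(1) > \tilde v(2) > \cdots > \tilde v(k-1)\}.\]

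To evaluate $|\mathcal{A}_I|$, I condition on the pattern $\pi$ of the tail $(\tilde v(k),\ldots,\tilde v(n+k-1))$. The same index-shift calculation gives $D_k(\pi) = r_n(I)$, so $\pi$ ranges over $\mathcal{D}_k(r_n(I),n)$, contributing $d_k(r_n(I),u,n)$ choices with first-element rank $u$. The $k-1$ values of the initial decreasing prefix then form a subset $S \subseteq [m-1]$ of size $k-1$ with $\min(S) \leq u$ (the constraint $\tilde v(k) > \tilde v(k-1)$ that forbids a $k$-descent at position $1$ of $\tilde v$), and the count of such $S$ is exactly $\binom{m-1}{k-1} - \binom{m-1-u}{k-1}$. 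Summing over $u$ produces the displayed recurrence.

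The main subtle point, and hence the hard part if anything is, is verifying that no spurious $k$-descents are created at positions $2,3,\ldots,k-1$ of $\tilde v$, which would ruin the count since those positions must lie outside $r_{n+k-1}(I) = \{n+1-i : i \in I\}$ whose smallest element is $\geq k$ by hypothesis. Any putative $k$-descent of $\tilde v$ at position $j \in \{1,\ldots,k-1\}$ would demand $\tilde v(j) > \cdots > \tilde v(j+k-1)$, a range that always contains the consecutive pair $(k-1,k)$; but this pair satisfies $\tilde v(k-1) < \tilde v(k)$ by construction (the initial prefix decreases and then jumps up to the tail), so no such descent can occur. With this consistency check the combinatorial argument runs parallel to the $I = \emptyset$ case, and the theorem follows.
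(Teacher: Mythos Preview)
Your proposal is correct and follows exactly the approach the paper intends: the paper's own proof consists solely of the sentence ``The proof is essentially identical to the proof of Theorem~\ref{thm:mainrec},'' and you have carefully carried out that identical argument, establishing the analog of Proposition~\ref{prop:sum} with $d_k(r_n(I),u,n)$ in place of $f_k(u,n)$ and then applying $\Delta^k$ verbatim. Your verification that $\min(r_{n+k-1}(I)) \geq k$ and hence that the prefix construction introduces no spurious $k$-descents at positions $1,\ldots,k-1$ of $\tilde v$ is exactly the check needed to make the ``essentially identical'' claim go through, and your equivalence $\tilde v(k-1) < \tilde v(k) \Leftrightarrow \min(S) \leq u$ is the correct translation of the paper's condition ``$v(k-1)>v(k)\iff v(k-1)\geq u$'' into your notation.
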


\begin{proof}
The proof is essentially identical to the proof of Theorem~\ref{thm:mainrec}.
\end{proof}

\subsection{Fast computation of $d_k(I,n)$}\label{subsec:genfast}
Given Theorem~\ref{thm:otherrec}, one can repeat the argument in Subsection~\ref{subsec:fast} to get a $\Theta_{k,I}(N^2)$ algorithm for computing $d_k(r_n(I),n)$, and hence for computing $d_k(I,n)$ as well, as these are equal by Remark~\ref{rmk:bij}. However, with $t:=\max(I)+k-1$, Theorem~\ref{thm:otherrec} only allows for fast computation once all $d_k(r_n(I),m,n)$ are found for $t\leq n\leq t+k-1$. If these numbers were found naively by checking all permutations, it would take more than $t!$ time, which could be the bottleneck for practical purposes. However, the computation of these initial values can also be done much faster using a dynamic programming approach. Namely, one can start from $n=1$ and go up to $n=t+k-1$ doing the following. For each $n$, $m\leq n$, and $\ell\leq k$, we find the number of permutations in $S_n$ that start with $m$, have an initial decreasing sequence of length $\ell$ (and no initial decreasing sequence of length $\ell+1$), and do not violate the prescribed descent structure so far. These values for $n+1$ can each be found as a sum of values for $n$. For instance, the only way to have an initial decreasing sequence of length $\ell+1$ starting at $m$ is to concatenate a new larger value to the start of a decreasing sequence of length $\ell$ starting from $m'<m$ in the relative ordering on the last $n$ elements.  We will leave the details of figuring out the general case to the interested reader, as it is our opinion that this is easier to understand by giving it some thought, rather than by reading a formal description (for instance, we would need to introduce some new notation just to formally say what it means for a permutation to not violate the $k$-descent structure $I$ for $n<t$).

\subsection{Generating functions for $f_k(m,n)$}\label{subsec:generating}
In this subsection, we discuss generating functions. We are mainly interested in describing the ordinary generating function (o.g.f.) of $f_k(m.n)$.
\begin{definition}
For $k\geq 2$, we let $T_k(x,y)$ be the ordinary generating function for $f_k(m,n)$:
\[T_k(x,y)=\sum_{m,n\geq 1}f_k(m,n)x^m y^n.\]
\end{definition}

To translate Theorem~\ref{thm:mainrec} into the language of generating functions, we will use the following well-known lemma.

\begin{lemma}
Let $k,n\in \mathbb{Z}^+$. Suppose
$\Delta^k\left(\left(a_1,\ldots, a_{n+k}\right)\right)=\left(b_1, \ldots, b_n\right).$
Then for any $m\in [n]$, \[b_m=\sum_{i=0}^k (-1)^{k-i}\binom{k}{i}a_{m+i}.\]
\end{lemma}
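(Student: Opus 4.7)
The plan is to prove the lemma by induction on $k$, with Pascal's identity as the key algebraic tool in the inductive step.

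For the base case $k = 1$, the right-hand side is $(-1)^{1}\binom{1}{0} a_m + (-1)^{0}\binom{1}{1} a_{m+1} = -a_m + a_{m+1}$, which (modulo the sign convention discussed in the final paragraph below) matches $\Delta(A)_m = a_m - a_{m+1}$ as defined at the start of this section.

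For the inductive step, suppose the formula holds at level $k$. Writing $\Delta^{k+1} A = \Delta(\Delta^k A)$ and letting $c_\ell = (\Delta^k A)_\ell$, one has $b_m = c_m - c_{m+1}$. Expanding each of $c_m$ and $c_{m+1}$ via the inductive hypothesis and then re-indexing the shifted sum via $j = i + 1$, one obtains a single sum over $a_{m+j}$ for $0 \leq j \leq k+1$ in which the coefficient of $a_{m+j}$ is a signed combination of $\binom{k}{j}$ and $\binom{k}{j-1}$. Pascal's identity $\binom{k+1}{j} = \binom{k}{j} + \binom{k}{j-1}$ then collapses these into the binomial coefficients $\binom{k+1}{j}$ at level $k+1$, completing the induction. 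The boundary values $j=0$ and $j=k+1$ need to be handled separately, but each one arises from exactly one of the two sums and matches the claimed formula directly.

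The only nontrivial thing to track is the sign bookkeeping, because the paper's difference convention $\Delta(A)_m = a_m - a_{m+1}$ differs from the more commonly used $a_{m+1}-a_m$ by an overall factor of $-1$ per application of $\Delta$. This contributes a factor of $(-1)^k$ overall, which must be accounted for when reconciling with the $(-1)^{k-i}$ sign in the statement. Beyond this, the lemma is a standard derivation of the closed form for the $k$th iterate of a first-order finite difference operator, and no deeper idea is required; the main obstacle is simply keeping the signs consistent across the induction.
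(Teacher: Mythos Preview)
Your approach is the same as the paper's: induction on $k$ with Pascal's identity $\binom{k}{i}+\binom{k}{i-1}=\binom{k+1}{i}$ in the inductive step, which is exactly what the paper's two-line proof sketches. Moreover, your final paragraph correctly flags a genuine sign discrepancy: with the paper's convention $\Delta(A)_m=a_m-a_{m+1}$, the closed form is actually $b_m=\sum_{i=0}^k(-1)^{i}\binom{k}{i}a_{m+i}$, differing from the stated $(-1)^{k-i}$ by a global factor $(-1)^k$; the paper's own proof does not address this, so your observation is a useful sharpening rather than a defect in your argument.
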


\begin{proof}
This is true by induction on $k$. The base case $k=1$ is trivial, and the inductive step is just $\binom{k}{i}+\binom{k}{i-1}=\binom{k+1}{i}$.
\end{proof}

Applying this to the expression in Theorem~\ref{thm:mainrec} and multiplying everything by $x^{m+k} y^{n+k}$, we arrive at the following identity.

\begin{lemma}\label{lem:gen}
For any $k\geq 3$,
\[f_k(m,n)x^{m+k} y^{n+k}=\sum_{i=0}^k (-1)^{k-i}\binom{k}{i}f_k(m+i,n+k)x^{m+k}y^{m+k}.\]
\end{lemma}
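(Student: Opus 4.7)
The plan is to simply combine Theorem~\ref{thm:mainrec} with the preceding lemma and then multiply by a monomial. Concretely, fix $k\ge 3$ and $n\ge 1$, and set $a_j := f_k(j,n+k)$ for $j=1,\ldots,n+k$ and $b_j := f_k(j,n)$ for $j=1,\ldots,n$. Theorem~\ref{thm:mainrec} asserts exactly that $\Delta^k\bigl((a_1,\ldots,a_{n+k})\bigr) = (b_1,\ldots,b_n)$.

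Now I would invoke the previous lemma, which expresses each $b_m$ as an explicit alternating binomial combination of the $a_j$'s. Substituting back the definitions of $a_j$ and $b_j$, this yields the pointwise identity
\[
f_k(m,n) \;=\; \sum_{i=0}^k (-1)^{k-i}\binom{k}{i}\,f_k(m+i,n+k)
\]
for every $m\in [n]$. Multiplying both sides by the monomial $x^{m+k}y^{n+k}$ (which does not depend on the summation index $i$, so it passes inside the sum) produces exactly the claimed identity.

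There is essentially no obstacle: the content of the lemma has already been established in Theorem~\ref{thm:mainrec} and the preceding binomial expansion of iterated finite differences. The only "work" is bookkeeping — checking that the ranges of $m$ and $n$ for which this holds match the ranges in Theorem~\ref{thm:mainrec}, and noting that the monomial factor is introduced purely to prepare for summation over $m,n\ge 1$ so that the identity can later be assembled into a functional equation for the ordinary generating function $T_k(x,y)$ in the subsequent discussion.
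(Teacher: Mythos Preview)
Your proposal is correct and follows exactly the same approach as the paper: apply the preceding binomial-expansion lemma to the identity of Theorem~\ref{thm:mainrec}, then multiply through by $x^{m+k}y^{n+k}$. The paper's own proof is literally the one sentence ``Applying this to the expression in Theorem~\ref{thm:mainrec} and multiplying everything by $x^{m+k} y^{n+k}$, we arrive at the following identity,'' so there is nothing further to compare.
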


Summing this over all pairs $(m,n)\in \mathbb{Z}^+\times\mathbb{Z}^+$ with $1\leq m\leq n$, we get $T_k(x,y)x^k y^k$ on the left-hand side. As for the terms appearing on the right-hand side, these have the form $\sum_{1\leq m\leq n} f_k(m+i,n+k)x^{m+k}y^{n+k}$. This is almost equal to $T_k(x,y)x^{k-i}$, except that all terms with $m\leq i$ or $n\leq k$ or $m\geq n-k+i+1$ are missing. To deal with these missing terms, we proceed with a few more definitions. 


\begin{definition}
For $k\geq 2$, we define $F_k(y)$ to be the ordinary generating function for $f_k(n)$:
\[F_k(y)=\sum_{n\geq 1}f_k(n)y^n.\]
We also define $G_\ell(z)$ to be the ordinary generating function for $f_k(n,n+1-\ell)$:
\[G_{k,\ell}(z)=\sum_{n\geq \ell+1}f_k(n,n+1-\ell)z^{n-1}.\]
\end{definition}

The choices of indices in the definition of $G_{k,\ell}(z)$ may look strange, but this will be a convenient choice for later. To see how all the missing terms can be written in terms of these generating functions, first recall from before that $f_k(1,n)=f_k(2,n)=\cdots=f_k(k-1,n)=f_k(n-1)$ and $f_k(k,n)=f_k(n-1)-f_k(n-k)$. For each fixed value of $m\leq k$, consider the sum of missing terms with this $m$. Because of the fact we just recalled, any such sum can be written as a polynomial in $x,y$ times $F_k(y)$. For the missing terms with $m\geq n-k+1$, one can write these as a polynomial in $x,y$ times $G_{k,n+1-m}(xy)$. As for the missing terms with $n\leq k$, there are only finitely many, so these can be subtracted as a polynomial. Carrying all this out explicitly (e.g. for some particular $k$, such as $k=5$) is a huge mess, as for instance one needs to make sure that terms with $n\leq k$ only get subtracted once (so really these need to be added back in according to how many times each term is overcounted by the $m\leq k$ and $m\geq n-k+1$ sums). Nevertheless, even without computing all the coefficients explicitly, this argument gives us a functional equation for $T_k(x,y)$ of the following form:
\[T_k(x,y)\left(x^k y^k -(1-x)^k\right)=P_k(x,y)F_k(y)+\sum_{\ell=1}^k Q_{k,\ell}(x,y)G_{k,\ell}(xy)+R_k(x,y),\]
where $P_k(x,y)$, $Q_{k,\ell}(x,y)$ (for any $\ell\in [k]$), and $R_k(x,y)$ are polynomials, and we used the fact that the coefficients of $(1-x)^k=\sum_{i=0}^k (-1)^{k-i}\binom{k}{i}x^{k-i}$ match those in Lemma~\ref{lem:gen}. As $x^k y^k -(1-x)^k$ is invertible, we get a functional equation for $T_k(x,y)$.

\begin{proposition}\label{prop:genf}
For any $k\geq 3$, there are polynomials $P_k(x,y)$, $Q_{k,1}(x,y),\ldots, Q_{k,k}(x,y)$, and $R_k(x,y)$ so that
\[T_k(x,y)=\frac{P_k(x,y)F_k(y)+\sum_{\ell=1}^k Q_{k,\ell}(x,y)G_{k,\ell}(xy)+R_k(x,y)}{x^k y^k-(1-x)^k}.\]
\end{proposition}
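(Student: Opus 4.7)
The plan is to carry out explicitly the substitution argument sketched in the paragraph preceding the statement, starting from Lemma~\ref{lem:gen}. After multiplying that lemma by $x^{m+k}y^{n+k}$ and summing over all pairs $(m,n)\in\mathbb{Z}^+\times\mathbb{Z}^+$ with $1\le m\le n$, the left-hand side is exactly $x^ky^k T_k(x,y)$. On the right-hand side, the inner sum $\sum_{1\le m\le n} f_k(m+i,n+k)\,x^{m+k}y^{n+k}$ becomes, after the shift $(m',n')=(m+i,n+k)$, equal to $x^{k-i}$ times a sum of $f_k(m',n')x^{m'}y^{n'}$ over a shifted region. If that region were the whole of $\{1\le m'\le n'\}$, the summands would combine into $T_k(x,y)\sum_{i=0}^k(-1)^{k-i}\binom{k}{i}x^{k-i}=T_k(x,y)(1-x)^k$, so the whole problem reduces to accounting for the boundary terms that have to be added or subtracted because the shifted index region is not the full $\{1\le m'\le n'\}$.

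Next I would classify the missing terms by which edge they lie near. There are three strips: a \emph{left strip} where $m'\le k$, a \emph{right strip} where $m'\ge n'-k+1$, and a \emph{bottom strip} where $n'\le k$. On the left strip, the identities $f_k(m,n)=f_k(n-1)$ for $1\le m\le k-1$ and $f_k(k,n)=f_k(n-1)-f_k(n-k)$ (both established in Subsection~\ref{subsec:fast}) collapse every such sum into a polynomial-in-$(x,y)$ multiple of $F_k(y)$; this contributes the $P_k(x,y)F_k(y)$ term. On the right strip, for each fixed diagonal distance $\ell=n'-m'+1\in\{1,\dots,k\}$ the sum takes the form $\sum_{n'\ge\ell+1} f_k(n'+1-\ell,n')(xy)^{n'-1}\cdot(\text{polynomial in }x,y)$, which is precisely a polynomial multiple of $G_{k,\ell}(xy)$; this yields the $\sum_\ell Q_{k,\ell}(x,y)G_{k,\ell}(xy)$ terms. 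Finally, the bottom strip is a finite set and contributes a polynomial $R_k(x,y)$.

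Combining and rearranging gives
\[
T_k(x,y)\bigl(x^ky^k-(1-x)^k\bigr)=P_k(x,y)F_k(y)+\sum_{\ell=1}^k Q_{k,\ell}(x,y)G_{k,\ell}(xy)+R_k(x,y),
\]
after which one divides by $x^ky^k-(1-x)^k$ to obtain the claimed formula. The denominator is a nonzero polynomial (its constant coefficient is $-1$), so the division is valid in the formal Laurent field, and one checks that the resulting quotient really is the power series $T_k$ by matching coefficients via the original recurrence.

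The main obstacle is the bookkeeping: the three boundary strips overlap near the corners of the summation region, so terms near $(m,n)=(1,1),(k,k),(1,k),\dots$ are omitted several times and must be reinserted by inclusion–exclusion. The cleanest way to handle this is to define the right-hand side of Lemma~\ref{lem:gen} as a sum over the rectangle $\{1\le m, 1\le n\}$ first, subtract off the $n<m$ half-plane (which is another polynomial $\times G_{k,\ell}$ contribution after re-indexing the diagonals), and keep the overlap corrections inside $R_k$. Since the proposition only asserts existence of such polynomials rather than giving closed forms, we do not need to compute $P_k,Q_{k,\ell},R_k$ explicitly — it suffices to observe that each correction is supported on finitely many diagonals and on the near-axis boundary, hence is captured by polynomials times $F_k(y)$, $G_{k,\ell}(xy)$, or by a standalone polynomial $R_k(x,y)$.
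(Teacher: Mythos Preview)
Your proposal is correct and follows essentially the same approach as the paper: sum Lemma~\ref{lem:gen} over $1\le m\le n$, identify the three boundary strips (left, diagonal, bottom) whose corrections are absorbed respectively into $P_kF_k(y)$, $\sum_\ell Q_{k,\ell}G_{k,\ell}(xy)$, and $R_k$, and then invert $x^ky^k-(1-x)^k$. One small remark: since the constant term of $x^ky^k-(1-x)^k$ is $-1$, the inversion already takes place in the formal power series ring, so there is no need to pass to a Laurent field or to re-verify that the quotient equals $T_k$.
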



We will now specialize to the case $k=3$. We let $g_3(n)$ be the number of permutations in $S_n$ with no $3$-descents and no initial descent. Noting that $f_3(n,n)=g_3(n-1)$ (since the $3$-descent-avoiding permutations starting with $n$ are precisely concatenations of $n$ with a permutation on $n-1$ elements that avoids $3$-descents and does not start with a descent), we get that $G_{3,1}$ defined before is also precisely the ordinary generating function for $g_3(n)$, which we will denote $G_3$ from now on for convenience:

\[G_3(z)=G_{3,1}(z)=\sum_{n\geq 1}g_3(n)z^n.\]
This was the motivation for the choice of indexing before. As a side remark, the facts that $f_k(1,n)=f_k(n-1)$ and $f_3(n,n)=g_3(n-1)$ provide another reason to think that the triangle of numbers $f_k(m,n)$ is nice -- namely, for $k=3$, the diagonal of first elements of rows is the sequence $f_3(n)$ (sequence A049774 in OEIS), and the diagonal of last elements of rows is the sequence $g_3(n)$ (sequence A080635 in OEIS), both of which are well-studied.

The following lemma will help us get a more explicit equation for $T_3(x,y)$.

\begin{lemma}
For any $n\geq 3$, $f_3(n,n)=g_3(n-1)$, $f_3(n-1,n)=g_3(n-1)+g_3(n-2)$, and $f_3(n-2,n)=g_3(n-1)+2g_3(n-2)$.
\end{lemma}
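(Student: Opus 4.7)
The plan is to prove the three identities successively by conditioning on the value of $w(2)$ relative to $w(1) = m$, using the earlier claims as each is established. The key bijective principle throughout is that once $w(1)$ is fixed, the restriction of a $3$-descent-avoiding $w \in S_n$ to positions $2,\ldots,n$ is (after normalizing to a permutation in $S_{n-1}$) a $3$-descent-avoiding permutation in $S_{n-1}$, with the extra constraint that $w(2) < w(3)$ whenever $w(1) > w(2)$ (to kill a potential $3$-descent at position $1$).

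For the first identity $f_3(n,n) = g_3(n-1)$: since $w(1) = n$ is the maximum, $w(1) > w(2)$ is automatic, so one needs $w(2) < w(3)$ to avoid a $3$-descent at position $1$. The normalized restriction of $w$ to positions $2,\ldots,n$ is then exactly an element of $S_{n-1}$ avoiding $3$-descents with no initial descent, counted by $g_3(n-1)$.

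For the second identity $f_3(n-1,n) = g_3(n-1) + g_3(n-2)$: split on whether $w(2) = n$ or $w(2) \in \{1,\ldots,n-2\}$. In the first case, $w(2)$ is the maximum of the values appearing in positions $2,\ldots,n$, so after normalization the tail is a $3$-descent-avoiding permutation of $S_{n-1}$ starting with its maximum, giving $f_3(n-1,n-1) = g_3(n-2)$ by the first identity. In the second case, $w(1) > w(2)$ forces $w(2) < w(3)$; the tail then normalizes to a $3$-descent-avoiding element of $S_{n-1}$ with $w(1) < w(2)$, and one checks that the constraint that $w(2)$ is not the largest in the tail is automatic from $w(2) < w(3)$. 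This contributes $g_3(n-1)$.

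For the third identity $f_3(n-2,n) = g_3(n-1) + 2g_3(n-2)$: split on $w(2) \in \{n, n-1\}$ versus $w(2) \in \{1,\ldots,n-3\}$. The case $w(2) = n$ again gives $f_3(n-1,n-1) = g_3(n-2)$. The case $w(2) = n-1$ gives $f_3(n-2,n-1)$, which by the second identity equals $g_3(n-2) + g_3(n-3)$. The case $w(2) \leq n-3$ forces $w(2) < w(3)$ and yields $3$-descent-avoiding permutations in $S_{n-1}$ with $w(1) < w(2)$ and $w(1)$ not equal to the second-largest value; subtracting the "bad" permutations (those with $w(1)$ equal to second-largest, forcing $w(2)$ to be the max, hence counted by $f_3(n-2,n-2) = g_3(n-3)$ applied to the remaining $n-2$ positions) from $g_3(n-1)$ yields $g_3(n-1) - g_3(n-3)$. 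Summing the three subcases, the $\pm g_3(n-3)$ terms cancel and leave $g_3(n-1) + 2g_3(n-2)$.

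The only real obstacle is careful bookkeeping of the relabeling when $w(2) \in \{n-1, n\}$ and the small cancellation in the third identity; since this cancellation is exact and the arguments for Claims 1 and 2 feed directly into Claim 3, no induction beyond using the just-proved identities is needed.
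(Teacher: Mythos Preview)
Your argument is correct. For the first two identities your decomposition coincides with the paper's. For the third identity the two proofs diverge: you condition on the \emph{value} of $w(2)$ (the three cases $w(2)=n$, $w(2)=n-1$, $w(2)\le n-3$), invoke the first two identities at the smaller parameter $n-1$, and then rely on the cancellation of the $\pm g_3(n-3)$ terms. The paper instead conditions on whether $w(2)<w(3)$ or $w(2)>w(3)$: the first case immediately gives $g_3(n-1)$, and in the second case one is forced into $w(2)\in\{n-1,n\}$, each subcase contributing $g_3(n-2)$ directly by looking at positions $3,\dots,n$. The paper's split is slightly cleaner in that no auxiliary $g_3(n-3)$ terms appear and the earlier identities are not needed at a shifted index; your split has the virtue of being uniform (always peel off one position and normalize), at the cost of the small bookkeeping you flagged.
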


\begin{proof}
The first claim was proved earlier. As for $f_3(n-1,n)$, any permutation starting with $n-1$ and having a $3$-descent-avoiding restriction to the last $n-1$ indices that does not start with a descent is counted by $f_3(n-1,n)$, and there are $g_3(n-1)$ such permutations. The only other permutations counted by $f_3(n-1,n)$ start with $n-1$ and have $n$ as the second element, in which case there are $g_3(n-2)$ options for the restriction to the last $n-2$ elements. So $f_3(n-1,n)=g_3(n-1)+g_3(n-2)$.

For $f_3(n-2,n)$, any permutation starting with $n-2$ and having a $3$-descent-avoiding restriction to the last $n-1$ indices that does not start with a descent is counted by $f_3(n-1,n)$, and there are $g_3(n-1)$ such permutations. Any other permutation counted by $f_3(n-1,n)$ has either $n$ or $n-1$ as the second element (and the third element less than the second). The first case is counted by $g_3(n-2)$ as before. For the second case, the third element cannot be $n$, so the third and fourth element must not form a descent (and this is sufficient as well) so this case is also counted by $g_3(n-2)$. Hence, $f_3(n-2,n)=g_3(n-1)+2g_3(n-2)$.
\end{proof}

The above lemma implies that for $k=3$, the $m\geq n-k+1$ missing terms can all be expressed in terms of $G_3$ alone. We worked our way through this calculation, figuring out these polynomials explicitly. The result is the following.

\begin{proposition}\label{prop:gen}
\[T_3(x,y)=\frac{P(x,y)F_3(y)+Q(x,y)G_3(xy)+R(x,y)}{x^3 y^3-(1-x)^3},\]

where $P,Q,R$ are the following polynomials in $x$ and $y$:
\[P(x,y)=xy(x^2y^2-(1-x)^2),\]
\[Q(x,y)=(x-1)x^2y(xy+x-1),\]
\[R(x,y)=(x-1)xy\left((x-1)^2-x^2y^2\right).\]
\end{proposition}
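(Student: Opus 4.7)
The plan is to specialize the derivation outlined just before Proposition~\ref{prop:genf} to the case $k=3$, carrying out every bookkeeping step explicitly instead of hiding corrections inside abstract polynomial placeholders. The key simplification, provided by the lemma immediately preceding the statement, is that $f_3(n,n)$, $f_3(n-1,n)$, and $f_3(n-2,n)$ are all linear combinations of $g_3(n-1)$ and $g_3(n-2)$. Consequently all three right-boundary series $G_{3,1},G_{3,2},G_{3,3}$ that occur in the generic argument can be folded into a single $G_3(xy)$ term with polynomial coefficient, collapsing $\sum_{\ell=1}^3 Q_{3,\ell}(x,y)G_{3,\ell}(xy)$ into the single $Q(x,y)G_3(xy)$ summand appearing on the right-hand side.

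First I would specialize Lemma~\ref{lem:gen} to $k=3$ and sum both sides over $(m,n)\in \mathbb{Z}^+\times\mathbb{Z}^+$ with $m\leq n$. The left-hand side contributes $x^3y^3T_3(x,y)$. On the right, I substitute $(m',n')=(m+i,n+3)$ in the $i$-th summand, note $\sum_{i=0}^3(-1)^{3-i}\binom{3}{i}x^{3-i}=(1-x)^3$, and rearrange to obtain
\[T_3(x,y)\bigl[x^3y^3-(1-x)^3\bigr]=-\sum_{i=0}^3(-1)^{3-i}\binom{3}{i}x^{3-i}S_i(x,y),\]
where $S_i(x,y)$ is the generating sum of $f_3(m',n')x^{m'}y^{n'}$ over those pairs $(m',n')\in\{1\le m'\le n',\,n'\ge 1\}$ that fail the shifted constraint $i+1\le m'\le n'-3+i$, $n'\ge 4$.

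Second, for each $i\in\{0,1,2,3\}$ I would split the missing region into a finite corner $\{n'\le 3\}$, a left strip $\{m'\le i,\ n'\ge 4\}$, and a right strip $\{m'\ge n'-2+i,\ n'\ge 4\}$ (empty when $i=3$). The corner contributes an explicit polynomial. The left-strip sums are evaluated using $f_3(1,n')=f_3(2,n')=f_3(n'-1)$ (from Subsection~\ref{subsec:fast}) and, for the $i=3$ case, also $f_3(3,n')=f_3(n'-1)-f_3(n'-3)$, which is the instance $m=k=3$ of Proposition~\ref{prop:sum}; these produce contributions of the form $(\text{polynomial in }x,y)\cdot F_3(y)$ plus a polynomial residue. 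The right-strip sums are evaluated by substituting the three identities from the lemma preceding the proposition, expressing $f_3(n'-j,n')$ ($j=0,1,2$) as a $\mathbb{Z}$-linear combination of $g_3(n'-1)$ and $g_3(n'-2)$; reindexing so that $\sum_{n'\ge n_0}g_3(n'-j)(xy)^{n'}$ becomes a monomial multiple of $G_3(xy)$ minus finitely many initial-term corrections, each contribution takes the form $(\text{polynomial in }x,y)\cdot G_3(xy)$ plus a polynomial residue.

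Third, I would assemble everything: multiply each $S_i$ by $(-1)^{3-i}\binom{3}{i}x^{3-i}$, sum over $i$, and simplify. If done correctly, the coefficient of $F_3(y)$ collapses to $P(x,y)=xy(x^2y^2-(1-x)^2)$, the coefficient of $G_3(xy)$ to $Q(x,y)=(x-1)x^2y(xy+x-1)$, and the leftover polynomial to $R(x,y)=(x-1)xy((x-1)^2-x^2y^2)$, after which dividing by $x^3y^3-(1-x)^3$ gives the claim. The main obstacle will be precisely this bookkeeping: each right-strip identity contributes a generating-series truncation whose lower threshold depends on $j$, and the polynomial residues from the three strip types and from the corner $\{n'\le 3\}$ must be tallied with the right signs so that everything telescopes into the clean $R(x,y)$ above. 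As a sanity check I would expand the candidate right-hand side as a power series in $x,y$ and match it against the first several rows of Figure~\ref{fig:fmn} computed via the recurrence in Theorem~\ref{thm:mainrec}.
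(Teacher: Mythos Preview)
Your proposal is correct and follows essentially the same approach as the paper: the paper simply asserts that the lemma expressing $f_3(n,n),f_3(n-1,n),f_3(n-2,n)$ in terms of $g_3$ collapses the right-boundary contributions into a single $G_3(xy)$ term, and then says ``we worked our way through this calculation, figuring out these polynomials explicitly,'' without giving further detail. Your plan is precisely that calculation carried out explicitly---the same specialization of the argument before Proposition~\ref{prop:genf}, with the same three ingredients (corner, left strip via $f_3(1,n)=f_3(2,n)=f_3(n-1)$ and $f_3(3,n)=f_3(n-1)-f_3(n-3)$, right strip via the lemma), and your sign and index bookkeeping checks out.
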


For $k>3$, it remains open whether it is possible to reduce $G_{k,\ell}$ to some small number of generating functions, and whether the coefficient polynomials appearing in Proposition~\ref{prop:genf} can be explicitly understood.

\section{Asymptotics of $f_k(n)$}\label{sec:nasy}

In this section, we discuss the asymptotics of $f_k(n)$ for $k$ fixed and $n\to \infty$. This section is mostly review of work by other authors and well-known methods. In later sections, we will mostly use the following theorem which is a special case of Corollary 1.4. in \cite{kitaev}.

\begin{theorem}[Ehrenborg-Kitaev-Perry \cite{kitaev}]\label{thm:nasy}
For $k\in \mathbb{Z}$, $k\geq 2$, there are $c_k,r_k,\gamma_k\in \mathbb{R}$ with $0<r_k$, $0<c_k$, and $0\leq \gamma_k<1$, such that
\[f_k(n)=n!c_k r_k^n\left(1+O_k(\gamma_k^n)\right).\]
\end{theorem}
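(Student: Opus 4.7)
The plan is to deduce the asymptotic from analytic properties of the exponential generating function
\[F_k(x) = \sum_{n \geq 0} \frac{f_k(n)}{n!} x^n.\]
First I would invoke the cluster method of Goulden--Jackson (as adapted to consecutive pattern avoidance by Elizalde--Noy) to obtain an explicit expression of the form $F_k(x) = 1/u_k(x)$, where $u_k$ is entire. For the decreasing pattern $k(k-1)\cdots 1$ specifically, the overlaps are very clean (two consecutive copies of the pattern can only share one letter), so the cluster polynomial has a simple shape and one gets an explicit series expansion for $u_k$ along the lines of
\[u_k(x) = \sum_{j \geq 0} (-1)^j \left( \frac{x^{jk}}{(jk)!} - \frac{x^{jk+1}}{(jk+1)!} \right).\]

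The next step is to understand the zeros of $u_k$. Since $u_k(0) = 1$ and the series is, up to truncation, alternating in sign with rapidly decreasing magnitudes, a standard estimate on the tail shows that $u_k$ has a smallest positive real zero $\rho_k > 0$, that this zero is simple, and that every other zero has strictly larger modulus $\geq \rho_k' > \rho_k$. This is the main obstacle in the argument: one needs a quantitative separation between $\rho_k$ and the next nearest zero in order to get an exponentially small error term, rather than just subexponential. The cleanest way to do this is to use a Rouch\'e-type comparison with a dominating truncation of $u_k$ on a circle of radius slightly larger than $\rho_k$, exploiting the fact that the coefficients decay faster than geometrically.

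Once the zero structure is established, I would apply Cauchy's coefficient formula on the circle $|x| = \rho_k' - \epsilon$ for some small $\epsilon > 0$. Writing $F_k(x) = A_k/(x - \rho_k) + H_k(x)$ where $A_k = -1/u_k'(\rho_k)$ and $H_k$ is analytic on $|x| < \rho_k'$, extraction of the residue at $x = \rho_k$ gives
\[\frac{f_k(n)}{n!} = \frac{-A_k}{\rho_k} \rho_k^{-n} + O_k\bigl( (\rho_k' - \epsilon)^{-n} \bigr),\]
so setting $r_k = 1/\rho_k$, $c_k = -A_k/\rho_k > 0$, and $\gamma_k = \rho_k/(\rho_k' - \epsilon) < 1$ and multiplying through by $n!$ yields the claimed statement. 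Positivity of $c_k$ follows because $f_k(n) > 0$ grows at least as fast as $r_k^n n!$, forcing the residue to have the right sign.
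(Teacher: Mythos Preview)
Your approach is essentially the same as the paper's: the paper does not prove this theorem from scratch (it is cited as a special case of Corollary~1.4 of Ehrenborg--Kitaev--Perry), but Section~3 re-derives the $k=3$ case and sketches general $k$ by exactly your method --- write $F_k(x)=1/u_k(x)$ with $u_k$ entire, locate the smallest-modulus zero of $u_k$ (the paper cites Warlimont for uniqueness and simplicity, which is your Rouch\'e step), and extract coefficients by the standard pole-cancellation argument.

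One small slip worth flagging: your displayed formula for $u_k$ carries a spurious $(-1)^j$. The correct expression, quoted in the paper from Stanley's EC1 (Chapter~2, Exercise~23(b)), is
\[u_k(x)=\sum_{\ell\ge 0}\left(\frac{x^{k\ell}}{(k\ell)!}-\frac{x^{k\ell+1}}{(k\ell+1)!}\right),\]
with no alternating sign across the pairs; you can check this already for $k=2$, where avoiding $21$ forces $F_2(x)=e^x$ and hence $u_2(x)=e^{-x}$, which your version does not reproduce.
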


\subsection{The value of $r_3$}

In this subsection, we re-derive Theorem~\ref{thm:nasy} for the special case $k=3$, both to find the value of $r_3$, but also as exposition of a nice analytic method for finding asymptotics of generating functions. The reader is referred to \cite{flajolet} for a much more general overview of this method.

We start with the following exponential generating function (e.g.f.) for $f_3(n)$ (OEIS sequence A049774, e.g.f. given by Noam Elkies \cite{oeis}):
\[B(x):=\frac{\sqrt{3}}{2}\frac{e^{x/2}}{\sin\left(\frac{\sqrt{3}}{2}x+\frac{2}{3}\pi\right)}.\]

That is, $f_3(n)$ is $n!$ times the $x^n$ coefficient of $B(x)$. Note that $B(x)$ is a meromorphic function with poles at $x=\frac{2\pi}{3\sqrt{3}}+\ell \frac{2\pi}{\sqrt{3}}$ for all $\ell\in \mathbb{Z}$, and these poles are simple. The two poles with smallest magnitudes are $x_1=\frac{2\pi}{3\sqrt{3}}$ and $x_2=\frac{-4\pi}{3\sqrt{3}}$. For reasons soon to be apparent, we will want to multiply $B(x)$ with a function that cancels out the pole at $x_1$, getting a function which is holomorphic in a disk around $0$ containing $x_1$. We define
\[A_1(x)=\frac{\sqrt{3}}{2}\frac{e^{x/2}}{\sin\left(\frac{\sqrt{3}}{2}x+\frac{2}{3}\pi\right)}\left(\frac{2\pi}{3\sqrt{3}}-x\right).\]

Getting rid of the pole at $x_2$ as well, we further define
\[A_2(x)=\frac{\sqrt{3}}{2}\frac{e^{x/2}}{\sin\left(\frac{\sqrt{3}}{2}x+\frac{2}{3}\pi\right)}\left(\frac{2\pi}{3\sqrt{3}}-x\right)\left(\frac{-4\pi}{3\sqrt{3}}-x\right).\]

We can use this to express $b_n$, the $x^n$ coefficient of $B(x)$, via $A_1(x_1)$. We pick some $R\in \mathbb{R}$ with $|x_2|>R>|x_1|$, so $A_1$ is holomorphic in the disk of radius $R$ around $0$. In this disk, we can write $A_1(x_1)$ as a power series:
\[A_1(x_1)=\sum_{i=0}^\infty a_i x_1^i.\]
On the other hand, we have the formal power series expansion
\[\frac{1}{x_1-x}=\frac{1}{x_1}\left(1+\frac{x}{x_1}+\frac{x^2}{x_1^2}+\cdots\right),\]
from which
\[b_n=\frac{a_n}{x_1}+\frac{a_{n-1}}{x_1^2}+\cdots+\frac{a_0}{x_1^{n+1}}.\]
Hence,
\[b_n x_1^{n+1}=a_0+a_1 x_1+ a_2 x_1^2+\cdots +a_n x_1^n=A_1(x_1)-\sum_{i=n+1}^{\infty}a_i x_1^i,\]
where we used the fact that the power series converges to $A_1(x_1)$ at $x_1$ in the last equality. Since as $n\to \infty$, $\sum_{i=n+1}^\infty a_i x_1^i\to 0$, we get that $b_n x_1^{n+1}\to A_1(x_1)$, from which 
\[b_n\sim \frac{A_1(x_1)}{x_1^{n+1}}.\]
From here, we already get that in Theorem~\ref{thm:nasy}, $r_3=\frac{1}{x_1}=\frac{3\sqrt{3}}{2\pi}$, and $c_3=A_1(x_1)/x_1=\frac{3\sqrt{3}}{2\pi}e^{\frac{\pi}{3\sqrt{3}}}$. By iterating this procedure once more (with $A_1(x)$ in place of $B(x)$ and $A_2(x)$ in place of $A_1(x)$), we get that asymptotically in $i$,
\[a_i\sim \frac{A_2(x_2)}{x_2^{i+1}}.\]
Hence, there is some constant $C$, such that for all $i\geq 0$,
\[|a_i|\leq \frac{C}{|x_2|^{i+1}}.\]

Using this, we can bound the error term in our equation for $b_n$:

\[|b_n x_1^{n+1}-A_1(x_1)|=\left\lvert\sum_{i=n+1}^{\infty}a_i x_1^{i}\right\rvert\leq \sum_{i=n+1}^\infty \frac{C}{|x_2|^{i+1}}x_1^{i}=C' \sum_{i=n+1}^\infty \left(\frac{x_1}{|x_2|}\right)^i=\frac{C'}{2^n}.\]

Hence, we can pick $\gamma_3=\frac{1}{2}$ in Theorem~\ref{thm:nasy}. We will summarize what we just proved in the next proposition.

\begin{proposition}
We define 
\[A_1(x)=\frac{\sqrt{3}}{2}\frac{e^{x/2}}{\sin\left(\frac{\sqrt{3}}{2}x+\frac{2}{3}\pi\right)}\left(\frac{2\pi}{3\sqrt{3}}-x\right)\]
and $x_1=\frac{2\pi}{3\sqrt{3}}$. Then
\[f_3(n)=n!\frac{A_1(x_1)}{x_1^{n+1}}\left(1+O\left(\frac{1}{2^n}\right)\right).\]
\end{proposition}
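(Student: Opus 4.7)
The plan is to promote the residue-at-the-dominant-pole sketch preceding the proposition into a complete argument. Writing $b_n = f_3(n)/n!$ for the $x^n$-coefficient of the e.g.f.\ $B(x)$, my first step is to record that $B$ is meromorphic with only simple poles, located precisely at $x_1 = \frac{2\pi}{3\sqrt{3}}$, $x_2 = -\frac{4\pi}{3\sqrt{3}}$, and further poles of modulus at least $\frac{8\pi}{3\sqrt{3}} = 2|x_2|$. Fixing any $R$ with $|x_1| < R < |x_2|$, the product $A_1(x) = B(x)(x_1 - x)$ is then holomorphic on the open disk $D_R$ of radius $R$. I would then write its Taylor expansion $A_1(x) = \sum_{i \geq 0} a_i x^i$, expand $(x_1 - x)^{-1} = \sum_{j \geq 0} x^j/x_1^{j+1}$, and equate coefficients of $x^n$ in $B = A_1/(x_1 - x)$ to obtain $b_n = \sum_{i=0}^n a_i / x_1^{n+1-i}$. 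Multiplying by $x_1^{n+1}$ and using convergence of the Taylor series of $A_1$ at $x_1 \in D_R$, this rearranges as
\[b_n x_1^{n+1} = A_1(x_1) - \sum_{i > n} a_i x_1^i.\]

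With $A_1(x_1)$ identified as the main term, it remains to bound the tail $\sum_{i > n} a_i x_1^i$. For this I would iterate the construction one more step. Define $A_2(x) = A_1(x)(x_2 - x)$, which is holomorphic on a disk of radius $R'$ with $|x_2| < R' < \frac{8\pi}{3\sqrt{3}}$ by the same pole-cancellation reasoning. Applying the analogous coefficient identity (now to $A_1$ in place of $B$ and $x_2$ in place of $x_1$), I obtain an expression for the $a_i$ that immediately yields a uniform bound of the form $|a_i| \leq C/|x_2|^{i+1}$ for some constant $C$ and all $i \geq 0$. The tail is then dominated by the geometric series
\[\left|\sum_{i > n} a_i x_1^i\right| \;\leq\; \sum_{i > n} \frac{C x_1^i}{|x_2|^{i+1}} \;=\; O\!\left((x_1/|x_2|)^n\right) \;=\; O(2^{-n}),\]
using that $x_1/|x_2| = 1/2$. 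Substituting this into the previous display and multiplying through by $n!/x_1^{n+1}$ yields the claimed asymptotic.

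I do not anticipate any real obstacle; the whole argument is a standard application of the residue method for asymptotics of Taylor coefficients of meromorphic functions. The only bookkeeping requiring care is confirming that the poles of $B$ are all simple and that no pole of $B$ has modulus strictly between $|x_1|$ and $|x_2|$, or between $|x_2|$ and $\frac{8\pi}{3\sqrt{3}}$ -- both facts are immediate from the location of the zeros of $\sin\!\left(\frac{\sqrt{3}}{2} x + \frac{2}{3}\pi\right)$ at the arithmetic progression $\frac{2\pi}{3\sqrt{3}} + \ell\,\frac{2\pi}{\sqrt{3}}$, $\ell \in \mathbb{Z}$. One should also note in passing that $A_1(x_1) \neq 0$, since it equals $-\mathrm{Res}_{x_1} B$ and $x_1$ is a genuine simple pole of $B$, so that the factor $A_1(x_1)\bigl(1 + O(2^{-n})\bigr)$ is meaningful as an asymptotic.
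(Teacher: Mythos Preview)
Your proposal is correct and follows essentially the same approach as the paper: cancel the dominant pole to get $A_1$, extract the main term $A_1(x_1)$ via the identity $b_n x_1^{n+1} = A_1(x_1) - \sum_{i>n} a_i x_1^i$, then iterate once with $A_2$ to bound $|a_i|$ by $C/|x_2|^{i+1}$ and sum the resulting geometric tail with ratio $x_1/|x_2| = 1/2$. Your added remarks verifying the pole locations and that $A_1(x_1)\neq 0$ are sound bookkeeping that the paper leaves implicit.
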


\subsection{Other $r_k$} We now consider the case of general $k\geq 3$. We start from the following well-known exponential generating function for $f_k(n)$, which appears as Exercise 23.(b) in Chapter $2$ of Stanley's Enumerative Combinatorics 1 \cite{10.5555/2124415}.

\begin{proposition}[\cite{genfunc}]
For any $k\geq 3$, the following is an exponential generating function for $f_k(n)$:
\[B_k(n)=\frac{1}{\sum_{\ell=0}^\infty \frac{x^{k\ell}}{(k\ell)!}-\frac{x^{k\ell+1}}{(k\ell+1)!}}.\]
That is, with $b_{k,n}$ being the $x^n$ coefficient in the power series for $B_k(n)$, we have $f_k(n)=b_{k,n}n!$.
\end{proposition}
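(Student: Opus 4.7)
The plan is to prove the equivalent identity $\omega_k(x)\cdot P_k(x)=1$, where $\omega_k(x):=\sum_{n\geq 0}f_k(n)\,x^n/n!$ and $P_k(x)$ is the series in the denominator on the right-hand side. The approach is inclusion--exclusion on $k$-descent positions combined with a cluster decomposition, in the spirit of the Goulden--Jackson cluster method as adapted to permutations by Elizalde and Noy \cite{ELIZALDE2003110}.

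For each $i\in[n-k+1]$ set $A_i:=\{w\in S_n:w(i)>\cdots>w(i+k-1)\}$. Inclusion--exclusion gives $f_k(n)=\sum_{I\subseteq[n-k+1]}(-1)^{|I|}|A_I|$ with $A_I=\bigcap_{i\in I}A_i$. The key geometric observation is that if $i,i'\in I$ with $|i-i'|\leq k-1$, the two forcing windows $[i,i+k-1]$ and $[i',i'+k-1]$ overlap and chain into a single longer decreasing run; partitioning $I$ into maximal chains under this adjacency yields a collection of merged decreasing blocks of sizes $m_1,\ldots,m_s\geq k$, and a direct choose-and-order count gives $|A_I|=n!/(m_1!\cdots m_s!)$. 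Reorganizing the sum by the resulting block shape (ordered sizes plus placement in $[n]$), the signs factor over blocks into the local quantity $c(m):=\sum_{S}(-1)^{|S|}$, where $S$ ranges over admissible cluster patterns: subsets of $[m-k+1]$ containing both endpoints with consecutive gaps $\leq k-1$.

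The standard labeled-sequence EGF formula for decompositions of $[n]$ into atoms---each atom being either a free position (EGF $x$) or a cluster (EGF $D_k(x):=\sum_{m\geq k}c(m)\,x^m/m!$)---then yields $\omega_k(x)=1/(1-x-D_k(x))$. It remains to compute $c(m)$: parametrizing $S$ by its sequence of consecutive gaps in $\{1,\ldots,k-1\}$ and summing a signed geometric series gives
\[\sum_{m\geq k}c(m)\,z^m=\frac{-z^k}{1+z+\cdots+z^{k-1}},\]
from which $c(m)=-1$ if $m\equiv 0\pmod{k}$, $c(m)=+1$ if $m\equiv 1\pmod{k}$, and $c(m)=0$ otherwise (for $m\geq k$). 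Substituting back collapses $1-x-D_k(x)$ exactly into $P_k(x)$, finishing the proof.

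I expect the main obstacle to be the bookkeeping in the cluster decomposition: verifying that each pair (block shape, admissible cluster pattern per block) corresponds to a unique $I$ and that the signs collect correctly as $c(m)$ per block, together with justifying the labeled-sequence EGF formula in this setting (which goes through because the atoms partition $[n]$ into consecutive intervals, so the "sequence of labeled structures" construction applies verbatim). Everything else---the inclusion--exclusion, the $n!/\prod m_j!$ count, the gap-composition evaluation of $c(m)$, and the final simplification---is routine.
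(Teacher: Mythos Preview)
The paper does not give its own proof of this proposition: it is quoted as a well-known result (cited from \cite{genfunc} and identified as Exercise~23(b) in Chapter~2 of Stanley's \emph{Enumerative Combinatorics~1}), so there is nothing to compare your argument against directly.

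That said, your proposal is correct and is essentially the standard modern derivation of this formula via the Goulden--Jackson cluster method for consecutive patterns, as in Elizalde--Noy \cite{ELIZALDE2003110}. The bookkeeping you flag as the main obstacle does go through cleanly for the monotone pattern: the map from subsets $I\subseteq[n-k+1]$ to pairs (tiling of $[n]$ by free singletons and cluster blocks, admissible mark set per block) is a bijection because two cluster blocks that are merely adjacent cannot merge---the last mark of one and the first mark of the next are at distance at least $k$---so the decomposition is recovered uniquely from $I$. The multinomial $|A_I|=n!/\prod m_j!$ is exactly what the labeled sequence-of-intervals EGF needs, so $\omega_k(x)=1/(1-x-D_k(x))$ follows. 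Your evaluation of $c(m)$ via gap compositions and the resulting collapse $1-x-D_k(x)=P_k(x)$ are both correct.
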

Warlimont \cite{warlimont} proves that this exponential generating function has a unique smallest magnitude pole and that this pole is simple. Together with an analogous standard argument to what we just showed for $k=3$, this implies that $r_k$ (in Theorem~\ref{thm:nasy}) is the reciprocal of the smallest magnitude root of $\sum_{\ell=0}^\infty \frac{x^{k\ell}}{(k\ell)!}-\frac{x^{k\ell+1}}{(k\ell+1)!}$. Warlimont also provides bounds on $r_k$. We state all of this in the next proposition.

\begin{proposition}[Warlimont \cite{warlimont}]\label{prop:war}
For any $k\geq 4$, the constant $r_k$ in Theorem~\ref{thm:nasy} is the unique smallest magnitude pole of $B_k$. Furthermore, we have the following bounds:
\[1+\frac{1}{k!}\left(1-g(k)\right)\leq \frac{1}{r_k}\leq 1+\frac{1}{k!}\left(1+h(k)\right),\]
where
\[g(k)=\frac{k!+1}{(k+1)!+1},\hspace{5mm}h(k)=\frac{2(k+1)}{k!-2(k+1)}.\]
\end{proposition}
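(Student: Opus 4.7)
The plan is to work directly with the denominator
\[D_k(x)=\sum_{\ell=0}^\infty \frac{x^{k\ell}}{(k\ell)!}-\frac{x^{k\ell+1}}{(k\ell+1)!},\]
so that $B_k(x)=1/D_k(x)$, and the poles of $B_k$ are precisely the zeros of $D_k$. The constant $r_k$ from Theorem~\ref{thm:nasy} is the reciprocal of the smallest-magnitude zero of $D_k$, so both claims in the proposition become claims about the zero set of $D_k$: the smallest-magnitude zero is unique and simple, and it is a positive real $\rho_k$ lying in the interval $\bigl[1+\tfrac{1}{k!}(1-g(k)),\,1+\tfrac{1}{k!}(1+h(k))\bigr]$.

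For the quantitative bounds, I would just locate a real root of $D_k$ in the claimed interval by an intermediate value argument. Write $D_k(x)=1-x+\tfrac{x^k}{k!}-\tfrac{x^{k+1}}{(k+1)!}+T_k(x)$, where $T_k(x)=\sum_{\ell\ge 2}\bigl(\tfrac{x^{k\ell}}{(k\ell)!}-\tfrac{x^{k\ell+1}}{(k\ell+1)!}\bigr)$ is the tail. On any compact interval slightly larger than $[1,1+2/k!]$ one can crudely bound $|T_k(x)|$ in terms of $1/(2k)!$, which will turn out to be negligible compared to $1/k!$ once $k\ge 4$. Then evaluate the leading part of $D_k$ at $x=1+\tfrac{1}{k!}(1-g(k))$ and at $x=1+\tfrac{1}{k!}(1+h(k))$: a straightforward Taylor expansion in the small parameter $1/k!$ shows these two evaluations have opposite signs (the precise choice of $g$ and $h$ is designed so that even after absorbing $|T_k|$ the signs are unambiguous). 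The intermediate value theorem then produces a real zero in the target interval, and because $\rho_k$ will be shown to be the unique smallest-magnitude zero, it must be this one.

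For the uniqueness and simplicity of the smallest-magnitude zero, which I expect to be the main obstacle, the natural tool is Rouch\'e's theorem. On the circle $|x|=R$ for $R$ slightly larger than $\rho_k$ but safely smaller than the next candidate zero, one wants to compare $D_k(x)$ with its truncation $D_k^{(0)}(x)=1-x+\tfrac{x^k}{k!}-\tfrac{x^{k+1}}{(k+1)!}$. The plan is to show $|D_k(x)-D_k^{(0)}(x)|<|D_k^{(0)}(x)|$ on this circle, which transfers the zero-count from $D_k$ to $D_k^{(0)}$. For $D_k^{(0)}$, an elementary direct analysis (factoring out $1-x$ and examining the low-degree polynomial that remains, or using Descartes' rule of signs on the real axis combined with a winding number computation) identifies exactly one small-magnitude root, which is real, positive, and simple. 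Simplicity of the zero of $D_k$ then follows because $D_k'(\rho_k)\ne 0$: one can show $D_k'(\rho_k)$ differs from $D_k^{(0)\prime}(\rho_k)$ by a quantity of size $O(1/(2k-1)!)$, whereas the latter is of size $\Theta(1)$ at $x\approx 1$.

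The hardest technical step is producing clean enough estimates on $|D_k(x)-D_k^{(0)}(x)|$ on a circle of the right radius to run Rouch\'e, because one needs to rule out zeros not only slightly beyond $\rho_k$ on the real axis but everywhere on $|x|\le R$; this forces one to use the fact that $|x^m/m!|$ decays rapidly in $m$ for $|x|$ of constant size and $m$ large compared to $k$, together with an elementary lower bound for $|D_k^{(0)}(x)|$ on $|x|=R$ (for instance, by showing that $D_k^{(0)}$ is bounded away from zero on this circle via a compactness argument and Taylor estimate). Once Rouch\'e is set up on this single contour, everything else—uniqueness, simplicity, and the matching of $1/r_k$ with the real root located in the previous paragraph—falls out, completing the proof.
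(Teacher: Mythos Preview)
The paper does not prove this proposition at all: it is stated as a result of Warlimont \cite{warlimont} and used as a black box. There is no ``paper's own proof'' to compare against; the paper simply cites the result and moves on.

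Your sketch is a plausible outline of how such a proof could go (locate a real root by the intermediate value theorem, then isolate it via Rouch\'e), but as written it is not a proof. The intermediate value step is only asserted, not checked: you would need to actually evaluate $D_k$ at the two claimed endpoints and verify the signs, including controlling the tail $T_k$ explicitly, and this is exactly where the specific forms of $g(k)$ and $h(k)$ enter. The Rouch\'e step is even more schematic: you acknowledge that the hard part is a uniform lower bound for $|D_k^{(0)}(x)|$ on a suitable circle, but you do not supply one, and appealing to ``a compactness argument'' does not give a bound that is uniform in $k$. Without that, you cannot conclude uniqueness of the smallest-magnitude zero for all $k\ge 4$. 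If you want a self-contained argument, you should consult Warlimont's paper for the actual estimates, or at minimum carry out the sign checks and the Rouch\'e comparison with explicit constants.
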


The first three values are $\frac{1}{r_3}=\frac{2\pi}{3\sqrt{3}}=1.209199576\ldots$, $\frac{1}{r_4}=1.038415637\ldots$, and $\frac{1}{r_5}=1.007187547786\ldots$ (given by Kotesovec on OEIS sequences A049774, A117158, and A177523, respectively) \cite{oeis}.

We finish this section with another remark. There is a trick which lets us rewrite the aforementioned infinite sum in a finite form. We start from an identity which can be proven by expanding all terms on the RHS as infinite series:
\[\sum_{\ell=0}^\infty \frac{x^{k\ell}}{(k\ell)!}=\frac{1}{k}\left(e^x+e^{\omega_k x}+\cdots+e^{\omega_k^{k-1}}x\right),\]
where $\omega_k=e^{\frac{2\pi i}{k}}$. Integrating both sides, we get
\[\sum_{\ell=0}^\infty \frac{x^{k\ell+1}}{(k\ell+1)!}=\frac{1}{k}\left(e^x+\frac{1}{\omega_k}e^{\omega_k x}+\cdots+\frac{1}{\omega_k^{k-1}} e^{\omega_k^{k-1}x}\right).\]

Subtracting the second from the first, we get
\[\frac{1}{B_k(x)}=\frac{1}{k}\left(\left(1-\frac{1}{\omega_k}\right)e^{\omega_k x}+\left(1-\frac{1}{\omega_k^2}\right)e^{\omega_k^2 x}+\cdots+\left(1-\frac{1}{\omega_k^{k-1}}\right)e^{\omega_k^{k-1}x}\right).\]

\section{Asymptotics of $f_k(m,n)$}\label{sec:mnasy}

\subsection{Two propositions on $f_k(m,n)$}
In this subsection, we present two propositions on $f_k(m,n)$. The first one is a nice fact about $f_k(m,n)$ which will be needed to finish the proof of Theorem~\ref{thm:fmnasy} later.

\begin{proposition}\label{prop:decr}
For any $k\geq 2$ and $n\in \mathbb{Z}^+$,
\[f_k(1,n)\geq f_k(2,n)\geq \cdots \geq f_k(n,n).\]
\end{proposition}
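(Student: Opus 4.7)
The plan is to exhibit, for each $1 \leq m \leq n-1$, an injection $\phi\colon \mathcal{D}_k(\emptyset,m+1,n)\to \mathcal{D}_k(\emptyset,m,n)$; iterating gives the full chain of inequalities at once. The map I propose is the simplest imaginable: given $w \in \mathcal{D}_k(\emptyset,m+1,n)$, swap the two values $m$ and $m+1$, producing $w'$ with $w'(1)=m$ and $w'(j)=m+1$, where $j\geq 2$ denotes the unique position of $m$ in $w$. Since the swap is its own inverse, injectivity is automatic, and the entire content of the proof reduces to checking that $w'$ still avoids the pattern $k(k-1)\cdots 1$.

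To verify this, I would argue by contradiction: assume $w'$ has a $k$-descent in some window $W=\{i,i+1,\ldots,i+k-1\}$, and case-split on which of positions $1$ and $j$ lie in $W$. If neither does, then $w'$ agrees with $w$ on $W$ and $w$ would itself contain a $k$-descent, which is forbidden. If only position $1$ lies in $W$, then $i=1$ and the chain $m=w'(1)>w(2)>\cdots>w(k)$ upgrades trivially to $m+1>w(2)>\cdots>w(k)$, again giving a $k$-descent in $w$. If both positions $1$ and $j$ lie in $W$, then $i=1$ and $j\in\{2,\ldots,k\}$, and the descent demands $m=w'(1)>\cdots>w'(j)=m+1$, which is absurd.

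The only substantive case is when position $j$ alone lies in $W$ (so $i \geq 2$). Here the descent in $w'$ gives $w'(j-1)>m+1>w'(j+1)$, with the obvious modifications if $j=i$ or $j=i+k-1$. Outside position $j$ one has $w=w'$ on $W$; moreover $w(j+1)=w'(j+1)<m+1$ together with $w(j+1)\neq m$ (since $m$ sits uniquely at position $j$ in $w$) forces $w(j+1)<m$, while $w(j-1)=w'(j-1)>m+1>m$. So substituting $m$ for $m+1$ at position $j$ still yields a decreasing run of length $k$, i.e., $w$ contains a $k$-descent, contradicting our hypothesis.

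The hard part is really just keeping the case analysis organized; I do not anticipate any genuine obstacle. The conceptual observation driving everything is that because $m$ and $m+1$ are consecutive integers, the only way the swap can create a new $k$-descent would require both values to lie inside the same window of length $k$, and the only such window (namely $W \ni 1$ and $W \ni j$) pins the descent down to $i=1$, where the required inequality $m > m+1$ is plainly false.
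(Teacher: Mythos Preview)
Your proposal is correct and takes essentially the same approach as the paper: both construct the injection by swapping the two adjacent values at the first position and its partner elsewhere. The paper dispatches the key claim (``no $k$-descent can be created by this operation'') in a single clause, whereas you spell out the full case analysis on which of positions $1$ and $j$ lie in a putative descent window; your argument is more detailed but not materially different.
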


\begin{proof}
For $2\leq m\leq n$ and any $w\in \mathcal{D}_k(\emptyset,m,n)$, switching $m$ and $m-1$ in $w$ gives $w'\in \mathcal{D}_k(\emptyset,m-1,n)$, since no $k$-descent can be created by this operation. Furthermore, $w\mapsto w'$ is injective because $w$ can be uniquely recovered from $w'$ by switching $m$ and $m-1$ in $w'$. Hence, $f_k(m,n)\leq f_k(m-1,n)$.
\end{proof}

The second proposition will be crucial in deriving the asymptotic distribution of $f_k(m,n)$.

\begin{proposition}\label{prop:fmn}
For any $k\geq 3$ and $m\leq n\in \mathbb{Z}^+$,
\[f_k(m,n)=\binom{m-1}{0}f_k(n-1)-\binom{m-1}{k-1}f_k(n-k)+\binom{m-1}{k}f_k(n-k-1)-\binom{m-1}{2k-1}f_k(n-2k)+\cdots.\]
\end{proposition}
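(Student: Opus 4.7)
The plan is to prove the identity by strong induction on $n$. The base cases are $n\leq k$: when $n<k$ no $k$-descent fits inside $S_n$, so $f_k(m,n)=(n-1)!$, while for $n=k$ a direct count gives $f_k(m,k)=(k-1)!-\mathbbm{1}[m=k]$. On the right-hand side only the $\binom{m-1}{0}f_k(n-1)$ and $\binom{m-1}{k-1}f_k(n-k)$ terms can be nonzero for $n\leq k$ (since $\binom{m-1}{jk}=0$ whenever $jk>m-1$ and $m\leq n\leq k$), and one checks these match the stated values.

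For the inductive step $n\geq k+1$, rewrite Proposition~\ref{prop:sum} (substitute $n\mapsto n-k$ and collapse $\sum_u f_k(u,n-k)=f_k(n-k)$) as
\[f_k(m,n)=f_k(n-1)-\binom{m-1}{k-1}f_k(n-k)+\sum_{u\geq 1}f_k(u,n-k)\binom{m-1-u}{k-1},\]
where the upper summation bound may be dropped since $\binom{m-1-u}{k-1}$ vanishes outside its natural range. Next, apply the inductive hypothesis to each $f_k(u,n-k)$ and swap the order of summation. Every resulting inner sum has the shape $\sum_u\binom{u-1}{a}\binom{m-1-u}{k-1}$, which equals $\binom{m-1}{a+k}$ by the Chu--Vandermonde identity $\sum_{v\geq 0}\binom{v}{a}\binom{N-v}{b}=\binom{N+1}{a+b+1}$ (after the substitution $v=u-1$, with $N=m-2$, $b=k-1$). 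Choosing $a=jk$ and $a=jk-1$ produces the coefficients $\binom{m-1}{(j+1)k}$ and $\binom{m-1}{(j+1)k-1}$ of $f_k(n-(j+1)k-1)$ and $f_k(n-(j+1)k)$, respectively.

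Finally, reindex $j'=j+1$ and fold the $-\binom{m-1}{k-1}f_k(n-k)$ term into the resulting $j'\geq 2$ sum so as to extend it to $j'\geq 1$; the result matches the stated formula exactly. The main obstacle I foresee is tracking the reindexing and offsets cleanly, since several sums with slightly different ranges appear simultaneously; the Chu--Vandermonde invocation itself is routine and boundary worries do not arise because all binomials vanish outside their natural ranges.
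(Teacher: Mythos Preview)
Your proof is correct and takes a genuinely different route from the paper. The paper gives a direct combinatorial argument: it interprets the $\ell$th term $\binom{m-1}{\ell}f_k(n-\ell-1)$ as counting permutations starting with a length-$(\ell+1)$ decreasing run from $m$ whose tail avoids $k$-descents, and then checks that in the alternating sum each permutation is counted once if it lies in $\mathcal{D}_k(\emptyset,m,n)$ and zero times otherwise (the two extreme admissible $\ell$'s cancel for any permutation with a bad initial run). No induction and no Vandermonde are used.

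Your argument instead bootstraps from Proposition~\ref{prop:sum} by strong induction on $n$, reducing the claim to the convolution $\sum_{u}\binom{u-1}{a}\binom{m-1-u}{k-1}=\binom{m-1}{a+k}$. This is perfectly valid; the identity you invoke is exactly the ``choose the $(a{+}1)$st element'' form of Vandermonde, and the sign bookkeeping in your final reindexing is correct. One small point worth making explicit: you implicitly use the convention $f_k(0)=1$ (needed both in the $n=k$ base case when $m=k$, and when the inductive hypothesis is applied at $n-k=1$), and the convention that $\binom{a}{b}=0$ for integer $a<0$. Both are standard, but since the paper defines $f_k$ only on $\mathbb{Z}^+$, flagging the former would tighten the write-up.

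The trade-off: the paper's proof is self-contained and explains \emph{why} the particular sequence of binomial indices $0,k-1,k,2k-1,2k,\ldots$ arises (they are the two boundary lengths of the maximal initial decreasing run modulo $k$). Your approach is more mechanical but has the virtue of showing that the formula is forced purely by the recurrence of Proposition~\ref{prop:sum}, with no further combinatorial input.
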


For $k=3$, one can prove this by observing that in Proposition~\ref{prop:gen}, the $G(xy)$ and $R(x,y)$ terms will only contribute to coefficients of $x^m y^n$ with $m>n$. So $f_3(m,n)$ is just given by the coefficients of $\frac{Q(x,y)F(y)}{P(x,y)}$. For general $k$, we give the following combinatorial proof.

\begin{proof}
Let us consider the right-hand side of the equation we want to prove. We think of the first term, $\binom{m-1}{0}f_k(n-1)$, as counting all permutations that start with $m$ and for which the restriction to the other $n-1$ indices is a permutation in $\mathcal{D}_k(\emptyset,n-1)$. We think of the second term, $\binom{m-1}{k-1}f_k(n-k)$, as counting all permutations that start with a decreasing sequence of $k$ elements beginning at $m$, and the restriction to the other $n-k$ indices is a permutation in $\mathcal{D}_k(\emptyset,n-k)$. We think of the next term, $\binom{m-1}{k}f_k(n-k-1)$ as the same except the initial sequence is now of length $k+1$, and so on. Let us consider how many times each permutation in $S_n$ gets counted, taking signs into account. If a permutation does not start with $m$, then it is clearly not counted by any term. If a permutation starts with $m$ and contains a $k$-descent somewhere not in an initial decreasing sequence, then it does not get counted by any term. If a permutation starts with $m$, avoids $k\ldots 1$ except in the initial decreasing sequence, and has an initial decreasing sequence of length $t\geq k$, it gets counted in exactly the terms with $\binom{m-1}{\ell}$ with the two maximal values $\ell<t$; since these have opposing signs, these counts cancel each other. If a permutation starts with $m$ and avoids $k$-descents, then it is counted exactly once, namely by just the first term. This covers all the cases. Hence, the right-hand side counts the number of permutations starting with $m$ and avoiding $k$-descents, which is equal to $f_k(m,n)$ by definition. So the right-hand side is equal to the left-hand side, completing the proof.
\end{proof}

We remark that one can also derive the $k$th difference equation (Theorem~\ref{thm:mainrec}) from Proposition~\ref{prop:fmn}.

\subsection{The asymptotic distribution of $f_k(m,n)$}
We now prove the following theorem which describes the asymptotic distribution of $f_k(m,n)$. The content of this theorem is that as $n\to \infty$, the mass of $f_k(n)$ is distributed among $f_k(1,n),\ldots, f_k(n,n)$ according to an explicit distribution $\varphi_k\left(\frac{m}{n}\right)$ -- crucially, this distribution does not depend on $n$ (after the appropriate normalization of $\frac{1}{n}$).

\begin{theorem}\label{thm:fmnasy}
For all $k\geq 3$, with $r_k$ from Theorem~\ref{thm:nasy}, for all $m,n\in \mathbb{Z}^+$ with $1\leq m\leq n$,
\[\frac{nf_k(m,n)}{f_k(n)}=\varphi_k\left(\frac{m}{n}\right)\left(1+O_k\left(n^{-0.49}\right)\right),\]
where 
\[\varphi_k\left(x\right)=\frac{1}{r_k}\left(1-\frac{(x/r_k)^{k-1}}{(k-1)!}+\frac{(x/r_k)^k}{k!}-\frac{(x/r_k)^{2k-1}}{(2k-1)!}+\frac{(x/r_k)^{2k}}{(2k)!}-\cdots \right).\]
\end{theorem}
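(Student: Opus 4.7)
The plan is to substitute the asymptotic of Theorem~\ref{thm:nasy} into the exact combinatorial identity of Proposition~\ref{prop:fmn} term by term. The crucial structural observation is that the sequence of indices $(a_j)_{j\ge 0}=(0,\,k-1,\,k,\,2k-1,\,2k,\ldots)$ appearing in the binomial coefficients $\binom{m-1}{a_j}$ of Proposition~\ref{prop:fmn}, together with the alternating signs $(-1)^j$, matches exactly the exponents and signs in the defining series for $\varphi_k$. So the identity lines up term-by-term with the target, and what remains is to estimate the per-term error.

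Writing $nf_k(m,n)/f_k(n)=\sum_{j}T_j$ with $T_j:=(-1)^j\, n\binom{m-1}{a_j}f_k(n-a_j-1)/f_k(n)$, the elementary identity $n\binom{m-1}{a}(n-a-1)!/n!=\tfrac{1}{a!}\prod_{i=1}^{a}(m-i)/(n-i)$ combined with Theorem~\ref{thm:nasy} gives
\[T_j=\tilde T_j\cdot\prod_{i=1}^{a_j}\frac{1-i/m}{1-i/n}\cdot\bigl(1+O(\gamma_k^{n-a_j-1})\bigr),\qquad \tilde T_j:=(-1)^j\cdot\frac{(m/(r_kn))^{a_j}}{r_k\cdot a_j!},\]
so that $\sum_{j}\tilde T_j=\varphi_k(m/n)$ by construction. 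Each factor of the product lies in $(0,1]$ for $m\le n$, and a Taylor expansion yields $\prod=1+O(a^2(n-m)/(mn))=1+O(a^2/m)$ in the range $a\le\sqrt{m}$.

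To bound $\sum_{j}|T_j-\tilde T_j|$, split at $a_j=\sqrt{m}$. The low-$a$ part contributes at most $(1/m)\sum_j a_j^2|\tilde T_j|=O_k(1/m)$, using the identity $\sum_{a\ge 0}a^2x^a/a!=x(1+x)e^x$ evaluated at $x=m/(r_kn)\le 1/r_k$. The high-$a$ part is bounded by $\sum_{a>\sqrt{m}}(1/r_k)^a/a!$, which decays faster than any polynomial in $m$ once $\sqrt{m}>e/r_k$. The few $j$ with $a_j$ so close to $n$ that Theorem~\ref{thm:nasy} needs care (because $n-a_j-1$ is small) are handled individually using the trivial bound $f_k(n-a-1)\le (n-a-1)!$ together with $f_k(n)\sim n!c_kr_k^n$, contributing a negligible $O_k(\gamma_k^{n/2})$. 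Altogether one obtains $|nf_k(m,n)/f_k(n)-\varphi_k(m/n)|=O_k(1/m+\gamma_k^{n/2})$.

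To promote this additive bound to the claimed multiplicative $O_k(n^{-0.49})$, one needs a uniform lower bound $\varphi_k(x)\ge c_k>0$ on $[0,1]$. Proposition~\ref{prop:decr} passed to the limit makes $\varphi_k$ non-increasing, and Proposition~\ref{prop:war} gives $1/r_k=1+O(1/k!)$, which makes the subtracted terms in $\varphi_k(1)$ small enough that $\varphi_k(1)\ge c_k>0$ follows easily. This immediately handles $m\ge n^{0.49}$. For $m<n^{0.49}$ the estimate $1/m$ is no longer tight enough, but in this regime direct computation from Proposition~\ref{prop:fmn} shows $\binom{m-1}{k-1}f_k(n-k)/f_k(n)=O_k((m/n)^{k-1})=O_k(n^{-(k-1)/2})$, so both $nf_k(m,n)/f_k(n)$ and $\varphi_k(m/n)$ agree with $1/r_k$ up to $O_k(n^{-1})$, well inside the required error. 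The main obstacle is the uniform positivity of $\varphi_k$, which requires rigorously controlling the tail of an alternating series of increasing exponents; the rest is routine asymptotic bookkeeping.
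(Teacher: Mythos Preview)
Your proposal is correct and follows essentially the same route as the paper: substitute Theorem~\ref{thm:nasy} into Proposition~\ref{prop:fmn} term by term, truncate the sum, and control the error. The only tactical differences are that the paper truncates at $\ell=\log n$ rather than $\sqrt{m}$, and for small $m$ the paper sandwiches $f_k(m,n)$ between $f_k(1,n)$ and $f_k(\lceil\sqrt{n}\rceil,n)$ via Proposition~\ref{prop:decr}, whereas you bound the $\binom{m-1}{k-1}$ term directly; your treatment of that regime is slightly cleaner. One small point: your appeal to Proposition~\ref{prop:war} for $\varphi_k(1)>0$ is fine for large $k$ but for $k=3,4$ needs an explicit check; the paper instead pairs consecutive terms of the series using $1/r_k<\sqrt{2}$, which works uniformly.
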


We note that the exponent $-0.49$ is just chosen for clarity, and our proof really gives something slightly stronger.
\begin{remark}
In Theorem~\ref{thm:fmnasy}, the exponent $-0.49$ can be replaced by any $\alpha>-0.5$.
\end{remark}

Given Proposition~\ref{prop:fmn} and Theorem~\ref{thm:nasy}, the proof of Theorem~\ref{thm:fmnasy} is just algebra and analysis. 

\begin{proof}[Proof of Theorem~\ref{thm:fmnasy}]
Plugging the expression for $f_k(n)$ from Theorem~\ref{thm:nasy} into the expression for $f_k(m,n)$ in Proposition~\ref{prop:fmn}, we get
\[\frac{n}{f_k(n)}f_k(m,n)=\binom{m-1}{0}r_k\left(1+O(\gamma_k^{n-1})\right) -\binom{m-1}{k-1}\frac{(n-k)!}{(n-1)!}r_k^{-k}\left(1+O(\gamma_k^{n-k})\right)\]\[+\binom{m-1}{k}\frac{(n-k-1)!}{(n-1)!}r_k^{-k-1} \left(1+O(\gamma_k^{n-k-1})\right)-\binom{m-1}{2k-1}\frac{(n-2k)!}{(n-1)!}r_k^{-2k}\left(1+O(\gamma_k^{n-2k})\right)+\cdots.\]

Our proof strategy will be to first show that the terms with $\ell\geq \log{n}$ are negligible, then estimate the terms with $\ell\leq \log{n}$ just for the case $m\geq \sqrt{n}$, and then complete the proof for the remaning $m<\sqrt{n}$ case using analytic arguments and Proposition~\ref{prop:decr}. We will state whenever we restrict to a particular case. We start by consider a general term; it has the following form:
\[\binom{m-1}{\ell}\frac{(n-\ell-1)!}{(n-1)!}r_k^{-\ell-1}\left(1+O(\gamma_k^{n-\ell})\right)\]
\[=\frac{(m-1)\cdots(m-\ell)}{(n-1)\cdots(n-\ell)}\frac{r_k^{-\ell-1}}{\ell!}\left(1+O(\gamma_k^{n-\ell})\right).\]

We use the fact that $\ell!\geq \left(\frac{\ell}{e}\right)^n$, the fact that $m\leq n$ implies that $\frac{m-i}{n-i}\leq 1$, and the fact that $\ell\leq n$ implies that $1+O(\gamma_k^{n-\ell})\leq C$ (where the constant is independent of $\ell$) to upper-bound such a term:
\[\frac{(m-1)\cdots(m-\ell)}{(n-1)\cdots(n-\ell)}\frac{r_k^{-\ell-1}}{\ell!}\left(1+O(\gamma_k^{n-\ell})\right)\leq \frac{C}{r_k}\left(\frac{e/r_k}{\ell}\right)^{\ell}.\]
For $\ell\geq \log n$, we further bound this:
\[\frac{C}{r_k}\left(\frac{e/r_k}{\ell}\right)^{\ell}\leq\frac{C}{r_k}\left(\frac{e/r_k}{\log n}\right)^{\log n}=\frac{C}{r_k}n^{\log \frac{e/r_k}{\log n}}=\frac{C}{r_k}n^{-\log \frac{\log n}{r_k/e}}.\]
Hence, the contribution of all terms with $\ell\geq \log n$ is at most $n$ times the contribution of one such term (since there are at most $n$ terms), totaling to
\[O\left(n^{-\log \frac{\log n}{r_k/e}+1}\right).\]
Let us now focus on the case $m\geq \sqrt{n}$ and $\ell<\log n$. We bring our attention back to a general term. Note that
\[\binom{m-1}{\ell}=\frac{m^\ell}{\ell!}\left(1+O\left(\frac{\ell^2}{m}\right)\right)\]
and
\[\frac{(n-\ell-1)!}{(n-1)!}=n^{-\ell}\left(1+O\left(\frac{\ell^2}{n}\right)\right).\]

Hence, with these bounds on $\ell$ and $m$, a general term is
\[\binom{m-1}{\ell}\frac{(n-\ell-1)!}{(n-1)!}r_k^{-\ell-1}\left(1+O(\gamma_k^{n-\ell})\right)=\frac{1}{r_k}\frac{\left(\frac{m}{n}\frac{1}{r_k}\right)^\ell}{\ell!}\left(1+O\left(n^{-0.49}\right)\right).\]
Now, comparing the sum of the first $\log n$ terms in the initial sum for $f_k(m,n)$ with the sum
\[\frac{1}{r_k}\left(1-\frac{\left(\frac{m}{n}\frac{1}{r_k}\right)^{k-1}}{(k-1)!}+\frac{\left(\frac{m}{n}\frac{1}{r_k}\right)^k}{k!}-\frac{\left(\frac{m}{n}\frac{1}{r_k}\right)^{2k-1}}{(2k-1)!}+\cdots\right),\]
where the sum goes up to the largest $\ell<\log n$, we note that the difference is upper bounded by $\left(1+O\left(n^{-0.49}\right)\right)$ times the sum of absolute values of these terms. It is still upper bounded by the same thing with the finite sum replaced by an infinite sum, which is equal to some constant between $\frac{1}{r_k}$ and $\frac{1}{r_k}e^{\frac{1}{r_k}}$ (just by comparing terms). This observation together with our previous bound on the contribution of terms with $\ell\geq \log n$ implies that
\[\frac{nf_k(m,n)}{f_k(n)}=\frac{1}{r_k}\left(1-\frac{\left(\frac{m}{n}\frac{1}{r_k}\right)^{k-1}}{(k-1)!}+\frac{\left(\frac{m}{n}\frac{1}{r_k}\right)^k}{k!}-\frac{\left(\frac{m}{n}\frac{1}{r_k}\right)^{2k-1}}{(2k-1)!}+\cdots\right)+O(n^{-0.49}),\]
where we used the fact that $O\left(n^{-\log \frac{\log n}{r_k/e}+1}\right)=O(n^{-0.49})$. Now we proceed to bound the difference between the series cut off at $\log n$ and the corresponding infinite series. The sum of absolute values of tail terms (after $\log n$) can be upper-bounded by a geometric series with first term $\frac{1}{r_k}\frac{\left(\frac{m}{n}\frac{1}{r_k}\right)^{\log n}}{(\log n)!}$ and ratio $\frac{\frac{m}{n}\frac{1}{r_k}}{\log n}$. This is $O\left(\left(\frac{e/r_k}{\log n}\right)^n\right)=O(n^{-0.49})$, as argued before. Hence,
\[\frac{nf_k(m,n)}{f_k(n)}=\varphi_k\left(\frac{m}{n}\right)+O(n^{-0.49}),\]
with
\[\varphi_k(x)=\frac{1}{r_k}\left(1-\frac{\left(x/r_k\right)^{k-1}}{(k-1)!}+\frac{\left(x/r_k\right)^k}{k!}-\frac{\left(x/r_k\right)^{2k-1}}{(2k-1)!}+\cdots\right)\]
with the series being infinite now.

We still need to deal with the case $m<\sqrt{n}$. In order to do so, we analyze the function $\varphi_k(x)$. Note that for $x\in [0,1]$, $\varphi_k(x)$ is non-increasing. One can see this by showing that the derivative is non-positive by taking the derivative of the series, pairing up consecutive terms, and using the fact that $\frac{1}{r_k}<\sqrt{2}$, implied by Proposition~\ref{prop:war} and a manual computation for $k=3$, to show that each successive term is smaller in magnitude. A similar pairing argument gives that the second derivative is negative, and that $\varphi_k(1)>0$. We now deal with the case $m\leq \sqrt{n}$. In that case, by Proposition~\ref{prop:decr}, the value of $n\frac{f_k(m,n)}{f_k(n)}$, is at most $\frac{nf_k(1,n)}{f_k(n)}=\frac{nf_k(n-1)}{f_k(n)}=\frac{1}{r_k}\left(1+O(\gamma_k^{n-1})\right)$, and at least $\frac{n f_k(\lceil\sqrt{n}\rceil,n)}{f_k(n)}$. By what we have already proved, this lower bound is $\varphi_k\left(\frac{\lceil\sqrt{n}\rceil}{n}\right)+O(n^{-0.49})$. Since the first and second derivatives of $\varphi_k(x)$ are both negative, the magnitude of the first derivative of $\varphi_k$ is upper-bounded by $|\varphi_k'(1)|$. This is a constant, so $\varphi_k\left(\frac{\lceil\sqrt{n}\rceil}{n}\right)+O(n^{-0.49})=\varphi_k(0)+O(n^{-0.5})=\frac{1}{r_k}+O(n^{-0.5})$. Also, for $m\leq \sqrt{n}$, $\varphi_k(\frac{m}{n})=\frac{1}{r_k}+O(n^{-0.5})$. By combining the upper and lower bounds with this estimate, we thus get $n\frac{f_k(m,n)}{f_k(n)}=\varphi_k\left(\frac{m}{n}\right)+O(n^{-0.49})$.

So far, we have proved that for all $m\leq n$,
\[\frac{nf_k(m,n)}{f_k(n)}=\varphi_k\left(\frac{m}{n}\right)+O(n^{-0.49}).\]
We can finish the proof by noting that there is a uniform lower bound on $\varphi_k(x)$, namely $\varphi_k(x)>\varphi_k(1)>0$. With this, we arrive at the desired result:
\[\frac{nf_k(m,n)}{f_k(n)}=\varphi_k\left(\frac{m}{n}\right)\left(1+O(n^{-0.49})\right).\]

\end{proof}

Before moving on, we note that in analogy to what we did at the end of Section~\ref{sec:nasy}, one can instead write $\varphi_k(x)$ in the following finite form.

\begin{proposition}
We let $\omega_k=e^{\frac{2\pi i}{k}}$. For any $k\geq 3$,
\[\varphi_k(x)=\frac{1}{k r_k}\left((1-\omega_k)e^{\omega_kx}+\left(1-\omega_k^2\right)e^{\omega_k^2 x}+\cdots+\left(1-\omega_k^{k-1}\right)e^{\omega_k^{k-1}x}\right).\]
\end{proposition}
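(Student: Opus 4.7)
The plan is to apply the same roots-of-unity filter that was used at the end of Section~\ref{sec:nasy} to rewrite $1/B_k(x)$ in finite form, and simply transport it to the series that defines $\varphi_k$. Substituting $y = x/r_k$, the defining series can be organized as
\[
r_k\,\varphi_k(x) \;=\; \sum_{\ell=0}^{\infty}\frac{y^{k\ell}}{(k\ell)!} \;-\; \sum_{\ell=0}^{\infty}\frac{y^{k\ell+k-1}}{(k\ell+k-1)!},
\]
since the positive terms in the series are exactly those whose exponent lies in residue class $0$ modulo $k$, and the negative terms are exactly those whose exponent lies in residue class $k-1$ modulo $k$ (the values $k-1,2k-1,3k-1,\ldots$).

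Next, I would introduce the mod-$k$ slices $S_r(y) := \sum_{\ell\ge 0} y^{k\ell+r}/(k\ell+r)!$ for $0 \le r \le k-1$, so that $r_k\varphi_k(x) = S_0(y) - S_{k-1}(y)$. Using the standard roots-of-unity identity $\sum_{j=0}^{k-1}\omega_k^{j(n-r)} = k\cdot\mathbbm{1}[n\equiv r\!\!\pmod k]$, one obtains
\[
S_r(y) \;=\; \frac{1}{k}\sum_{j=0}^{k-1}\omega_k^{-jr}\,e^{\omega_k^j y}.
\]
This is exactly the identity the paper already verified for $r=0$ and $r=1$ in the $B_k$ calculation; the derivation is identical for general $r$ and consists of expanding each $e^{\omega_k^j y}$ as a power series, swapping the summations, and using the filter identity above.

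Subtracting $S_{k-1}$ from $S_0$ and using $\omega_k^{-j(k-1)} = \omega_k^{-jk}\,\omega_k^{j} = \omega_k^j$ gives
\[
r_k\,\varphi_k(x) \;=\; \frac{1}{k}\sum_{j=0}^{k-1}\bigl(1-\omega_k^{j}\bigr)\,e^{\omega_k^{j}\,x/r_k}.
\]
The $j=0$ summand vanishes because $1-\omega_k^0 = 0$, so only $j=1,\ldots,k-1$ contribute, and this is precisely the claimed finite form (with the exponent understood as $\omega_k^j x/r_k$, matching the rescaling $y = x/r_k$ inherent in the definition of $\varphi_k$).

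There is essentially no obstacle in this argument: the whole proof is a direct transcription of the finite-sum computation at the end of Section~\ref{sec:nasy}, with the single bookkeeping point being to identify that it is the residue class $r=k-1$ (not $r=1$) that now appears, which in turn is what produces the coefficient $(1-\omega_k^j)$ rather than $(1-\omega_k^{-j})$ as in the $B_k$ formula. No analytic input beyond absolute convergence of the series (for every $x\in\mathbb{R}$) is needed.
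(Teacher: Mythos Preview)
Your argument is correct and is exactly the approach the paper intends: it explicitly says the proposition follows ``in analogy to what we did at the end of Section~\ref{sec:nasy},'' i.e.\ by the same roots-of-unity filter, and you have carried this out cleanly, correctly observing that the residue class $k-1$ (rather than $1$) is what appears for $\varphi_k$. You are also right that the exponent in the stated formula should be $\omega_k^j x/r_k$ rather than $\omega_k^j x$; this is a typo in the paper, as one can confirm from the explicit $\varphi_3$ formula given immediately afterward.
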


By comparing power series expansions, one can write down something quite simple for the case $k=3$.

\begin{proposition}
\[\varphi_3(x)=\frac{4\pi}{9}e^{-\pi x/ (3\sqrt{3})}\sin((x+1)\pi/3)\]
\end{proposition}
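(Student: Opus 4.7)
The plan is to specialize the preceding finite form for $\varphi_k$ to $k=3$ and simplify using elementary trigonometry. Setting $k=3$ gives $\omega_3 = e^{2\pi i/3}$ and $\omega_3^2 = e^{-2\pi i/3} = \overline{\omega_3}$, so the two summands inside the parentheses of the preceding proposition are complex conjugates of each other. Writing $y := x/r_3$ (so that the exponents become $\omega_3^j y$ matching the power series definition of $\varphi_k$), their sum equals $2\,\mathrm{Re}\bigl[(1-\omega_3)\,e^{\omega_3 y}\bigr]$.

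Now I would just expand everything. From $\omega_3 = -\tfrac{1}{2} + \tfrac{\sqrt{3}}{2}i$ we get $1-\omega_3 = \tfrac{3}{2} - \tfrac{\sqrt{3}}{2}i$, and Euler's formula gives $e^{\omega_3 y} = e^{-y/2}\bigl(\cos(\sqrt{3}y/2) + i\sin(\sqrt{3}y/2)\bigr)$. Multiplying out and taking real parts yields
\[
2\,\mathrm{Re}\bigl[(1-\omega_3)\,e^{\omega_3 y}\bigr] = e^{-y/2}\bigl(3\cos(\sqrt{3}y/2) + \sqrt{3}\sin(\sqrt{3}y/2)\bigr).
\]
The identity $a\sin\theta + b\cos\theta = \sqrt{a^2+b^2}\,\sin(\theta+\phi)$ with $a=\sqrt{3}$, $b=3$ (so $\sqrt{a^2+b^2} = 2\sqrt{3}$ and $\tan\phi = \sqrt{3}$, i.e.\ $\phi = \pi/3$) collapses this to $2\sqrt{3}\,e^{-y/2}\sin(\sqrt{3}y/2 + \pi/3)$.

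All that remains is to substitute $r_3 = \tfrac{3\sqrt{3}}{2\pi}$ (established earlier in Section~\ref{sec:nasy}). Then $y/2 = \tfrac{\pi x}{3\sqrt{3}}$ and $\sqrt{3}y/2 = \tfrac{\pi x}{3}$, so $\sqrt{3}y/2 + \pi/3 = (x+1)\pi/3$, while the leading constant becomes $\tfrac{1}{3r_3}\cdot 2\sqrt{3} = \tfrac{2\sqrt{3}\cdot 2\pi}{9\sqrt{3}} = \tfrac{4\pi}{9}$. Assembling the pieces gives $\varphi_3(x) = \tfrac{4\pi}{9}\,e^{-\pi x/(3\sqrt{3})}\sin\bigl((x+1)\pi/3\bigr)$, as desired. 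There is no real obstacle here beyond the bookkeeping of constants; this is a straightforward specialization of the preceding proposition, as the paper itself indicates (``by comparing power series expansions'').
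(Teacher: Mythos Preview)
Your proof is correct and follows exactly the route the paper signals (``by comparing power series expansions''): specialize the finite exponential-sum form of $\varphi_k$ to $k=3$, collapse the two conjugate summands into a real part, and rewrite $a\sin\theta+b\cos\theta$ in amplitude--phase form. The paper gives no details beyond that one sentence, so you have simply filled them in.

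One point worth flagging explicitly: you quietly repair a typo in the preceding proposition when you set $y:=x/r_3$. As stated in the paper, the finite form has exponents $\omega_k^j x$, but matching against the power series definition of $\varphi_k$ (whose argument is $x/r_k$) shows the exponents must be $\omega_k^j x/r_k$; indeed, with the literal $\omega_k^j x$ the second derivative at $0$ would be $-1/r_3$ rather than the correct $-1/r_3^3$. Your parenthetical ``matching the power series definition'' is exactly this correction, and it is what makes the final constants come out to $4\pi/9$ and $\pi/(3\sqrt{3})$ rather than something off by powers of $r_3$.
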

Figure~\ref{fig:plots} shows a plot of $\varphi_3$. 

We will now argue that as $k\to \infty$, the sequence of functions $\varphi_k$ converges pointwise (and uniformly) to the constant function $1$. One can prove this by using Warlimont's bounds on $r_k$ from Proposition~\ref{prop:war} to bound the contribution of all terms after the first term of the series expansion for $\varphi_k$ given in Theorem~\ref{thm:fmnasy}. Namely, all terms appearing in the series expansion for $\varphi_k(x)$ also appear in the series expansion for $e^{x/r_k}$, and as $\frac{1}{r_k}<\sqrt{2}$, the absolute value of each term is upper bounded by the corresponding term in the series expansion for $e^{\sqrt{2}}$. Since this series expansion converges, the tail sum goes to $0$, and as $k\to \infty$, all non-zero terms of the expansion for $\varphi_k$ are contained in a tail sum starting further along, and hence have contribution going to $0$ (and this is uniform over $x\in [0,1]$; alternatively, one see that pointwise convergence implies uniform convergence from the fact that $\varphi_k(0)=\frac{1}{r_k}$ and $\varphi_k(x)$ is decreasing in $[0,1]$, so it suffices to show pointwise convergence to $\frac{1}{r_k}$ for $x=1$).

So $\lim_{k\to \infty}\left(\varphi_k-\frac{1}{r_k}\right)$ is $0$. Again, using Warlimont's bounds (or just that $\varphi_k$ has integral $1$), $\frac{1}{r_k}\to 1$, so we get that $\lim_{k\to\infty}\varphi_k$ is $1$. One would expect that avoiding $k$-descents says less and less about the first element of a permutation as $k$ becomes larger, so this result matches with intuition.

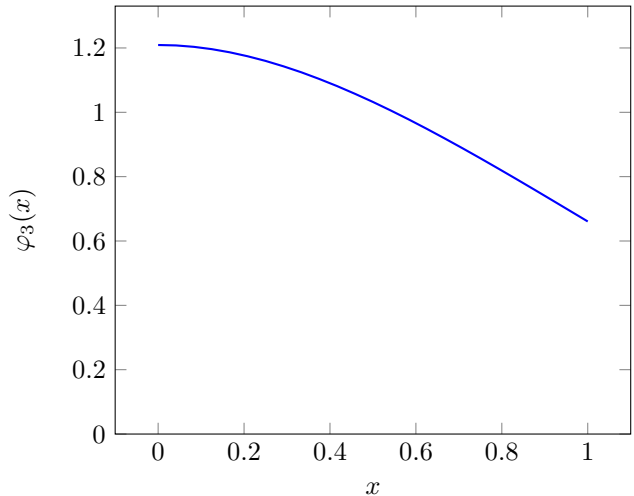
\begin{figure}
    \centering
    \begin{tikzpicture}
  \begin{axis}[domain=0:1,
    xlabel=$x$,
    ylabel={$\varphi_3(x)$},
    ymin=0,
    every axis plot/.append style={thick}
    ] 
    \addplot+[mark=none] {4/9*pi*e^(-pi*x/3^(3/2))*sin(deg(pi*(x+1)/3))}; 
  \end{axis}
\end{tikzpicture}

    \caption{Plot of $\varphi_3$}
    \label{fig:plots}
\end{figure}

\section{Asymptotics of $d_k(I,n)$}\label{sec:dasy}
In this section, we prove that if we fix a finite set $I\subseteq\mathbb{Z}^+$, then $d(I,n)$ is asymptotically given by an integral formula, and that this allows for efficient determination of the asymptotics of $d(I,n)$. As a consequence, we will prove the following theorem.

\begin{theorem}\label{thm:dasy}
For any $k\geq 3$ and finite $I\subseteq \mathbb{Z}^+$, there is a constant $c_{I,k}$ such that
\[d_k(I,n)=c_{I,k} f_k(n)\left(1+O(n^{-0.49})\right).\]
\end{theorem}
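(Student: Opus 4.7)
The plan is to decompose $\mathcal{D}_k(I,n)$ according to its initial segment and reduce to the already understood $f_k(m, n')$. Set $t_0 := \max(I) + k - 1$ and write $i_r := \max I$. Let $H_k(I) := \{\pi \in S_{t_0} : D_k(\pi) = I\}$ be the set of ``head patterns''. Every $w \in \mathcal{D}_k(I,n)$ is determined uniquely by a triple $(\pi, S, \tau)$, where $\pi \in H_k(I)$ is the order type of $(w(1),\ldots,w(t_0))$, $S \subseteq [n]$ with $|S|=t_0$ is the set of values placed in the head, and $\tau \in S_{n-t_0}$ is the order type of $(w(t_0+1),\ldots,w(n))$. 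The condition $D_k(w) = I$ forces $\tau \in \mathcal{D}_k(\emptyset, n-t_0)$ and also forbids $k$-descents at the boundary positions $p \in \{i_r+1,\ldots,t_0\}$. The crucial simplification is that the segment $w(i_r), w(i_r+1),\ldots, w(t_0)$ is already strictly decreasing (because $i_r \in I$), so every boundary $k$-descent condition collapses to the single inequality $w(t_0+1) > w(t_0)$.

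Parametrize now by $v := w(t_0) \in [n]$ and $m := \tau(1) \in [n-t_0]$, and write $j := j(\pi) := \pi(t_0)$, which is the rank of $v$ inside $S$. The number of admissible $S$ with $v$ as its $j$th smallest element is $\binom{v-1}{j-1}\binom{n-v}{t_0-j}$, and a short computation (there are $v - j$ elements of $[n]\setminus S$ below $v$) shows that $w(t_0+1) > w(t_0)$ is equivalent to $m \geq v - j + 1$. This yields the exact identity
\[ d_k(I,n) = \sum_{\pi \in H_k(I)} \sum_{v} \binom{v-1}{j(\pi)-1}\binom{n-v}{t_0-j(\pi)} \sum_{m=\max(1, v-j(\pi)+1)}^{n-t_0} f_k(m, n-t_0). \]
I would then substitute $f_k(m, n-t_0) = \frac{f_k(n-t_0)}{n-t_0}\,\varphi_k\!\left(\tfrac{m}{n-t_0}\right)\!(1 + O(n^{-0.49}))$ from Theorem~\ref{thm:fmnasy}, uniformly in $m$, and use Theorem~\ref{thm:nasy} in the form $f_k(n-t_0)/f_k(n) = n^{-t_0}\, r_k^{-t_0}\,(1 + O(n^{-1}))$ (after accounting for the falling factorial). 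The inner sum over $m$ is a Riemann sum for $(n-t_0)\int_{v/n}^{1}\varphi_k(x)\,dx$, and the outer sum over $v$, under $y := v/n$, is a Riemann sum for an integral weighted by $y^{j-1}(1-y)^{t_0-j}/((j-1)!(t_0-j)!)$. The explicit constant drops out as
\[ c_{I,k} = r_k^{-t_0}\sum_{\pi \in H_k(I)} \frac{1}{(j(\pi)-1)!\,(t_0-j(\pi))!}\int_0^1 y^{j(\pi)-1}(1-y)^{t_0-j(\pi)}\left(\int_y^1 \varphi_k(x)\,dx\right)dy. \]

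The main obstacle is clean error bookkeeping. The $O(n^{-0.49})$ from Theorem~\ref{thm:fmnasy} is the bottleneck, and one must verify that both the Riemann-sum discretization errors (which are $O(n^{-1})$, since $\varphi_k$ is smooth and the binomial weights concentrate on a window of width $\Theta(\sqrt{n})$ around $v \approx jn/t_0$, where $v$ and $n-v$ are both of order $n$) and the $O(n^{-1})$ correction from Theorem~\ref{thm:nasy} are swamped by the $n^{-0.49}$ term. Once the asymptotic formula for $c_{I,k}$ is established, one notes that $\varphi_k \geq \varphi_k(1) > 0$ on $[0,1]$ (as shown in the proof of Theorem~\ref{thm:fmnasy}) and that every other factor in $c_{I,k}$ is positive, so $c_{I,k} > 0$ whenever $H_k(I) \neq \emptyset$; consequently the additive error translates into the multiplicative form $(1 + O(n^{-0.49}))$ stated in the theorem. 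The trivial edge case $I = \emptyset$ gives $c_{\emptyset,k} = 1$ immediately.
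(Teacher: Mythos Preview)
Your proposal is correct and follows essentially the same route as the paper: decompose by the head pattern on the first $t_0=\max(I)+k-1$ positions, reduce the boundary constraint to the single inequality $w(t_0)<w(t_0+1)$, and express the count as a double sum over $(v,m)$ that is estimated by a double integral involving $\varphi_k$ and a Beta density; the paper phrases this probabilistically (constructing a random $w$ and computing $\mathbb{P}(w(t)<w(t+1))$) rather than via your explicit identity, but the content is identical, and your formula for $c_{I,k}$ agrees with the paper's after Fubini and the substitution $\Phi_s^t(y)=\tfrac{t!}{(s-1)!(t-s)!}y^{s-1}(1-y)^{t-s}$.

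One correction in your error bookkeeping: the parenthetical claim that ``the binomial weights concentrate on a window of width $\Theta(\sqrt{n})$ around $v\approx jn/t_0$'' is false in this setting, because $t_0$ is a \emph{fixed} constant (depending only on $I$ and $k$); the $j$th order statistic of $t_0$ points in $[n]$ is spread across all of $[n]$ according to a rescaled Beta law, not concentrated. (You may be thinking of Section~\ref{sec:equi}, where the block lengths grow with $n$.) Fortunately this does not damage your argument: the Riemann-sum error is $O(n^{-1})$ simply because the integrand is smooth and the mesh is $1/n$, which is exactly how the paper bounds it as well, so the $O(n^{-0.49})$ from Theorem~\ref{thm:fmnasy} remains the bottleneck.
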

Corollaries of this theorem include three conjectures by Zhu; these are Conjecture 3.2, Conjecture 6.4, and Conjecture 6.5 (which is a generalization of Conjecture 6.4) \cite{zhu2019enumerating}. Again, the exponent $-0.49$ can be replaced with anything strictly greater than $-0.5$, as will be evident from our proof (together with the fact that an analogous statement is true for Theorem~\ref{thm:fmnasy}).

Throughout this section, we will think of $I$ as being fixed, and we let $t=\max(I)+k-1$ (or in other words, $t$ is the index of the end of the last $k$-descent). We now start with discussion that will lead to a proof of Theorem~\ref{thm:dasy}.
Begin by noting that for any permutation $w\in\mathcal{D}_k(I,n)$, the restriction of $w$ to the first $t$ indices has descent set $I$: it is an element of $\mathcal{D}_k(I,t)$. In particular, note that it ends with a $k$-descent (this will be useful soon). The restriction of $w$ to the last $n-t$ indices has no $k$-descents: it is an element of $\mathcal{D}_k(\emptyset,n-t)$.

On the other hand, we can construct a (unique) permutation in $S_n$ by picking some $\tau\in \mathcal{D}_k(I,t)$ to be its restriction to the first $t$ indices; picking some $v\in \mathcal{D}_k(\emptyset,n-t)$ to be its restriction to the last $n-t$ indices; and picking a set of images for the first $t$ indices -- we are choosing a $t$-element subset of $[n]$, so there are $\binom{n}{t}$ choices for this last step. All in all, one can construct $d_k(I,t)\cdot f_k(n-t)\cdot \binom{n}{k}$ distinct permutations this way, and by what we argued before, all permutations in $d_k(I,n)$ are among these.



Now, the first piece of bad news is that not every $\tau\in \mathcal{D}_k(I,n)$, $v\in \mathcal{D}_k(\emptyset,n-t)$, and choice of $t$ elements from $[n]$ give a permutation $w\in \mathcal{D}_k(I,n)$. The problem is that although all indices in $I$ are starts of $k$-descents in $w$, it is possible that $w$ has additional $k$-descents starting at some other indices; namely, $w$ could have a $k$-descent starting at $\max(I)+1$ or $\max(I)+2$ or $\ldots$ or $\max(I)+k-1=t$. The first piece of good news is that such an unprescribed $k$-descent occurs in a $w$ constructed this way if and only if $w(t)>w(t+1)$ (this is a consequence of $\tau$ ending with a $k$-descent). In fact, $w(t)>w(t+1)$ will turn out to be the sort of event whose probability we can find using our knowledge of the asymptotic distribution of $f_k(m,n)$, i.e. Theorem~\ref{thm:fmnasy}. 

The second piece of bad news is that the permutations in $\mathcal{D}_k(I,t)$ might in general be hard to describe. To counteract this, we have our second piece of good news: as $I$ is fixed, there are only finitely many $\tau\in \mathcal{D}_k(I,t)$ (namely, no more than $t!$). We will first count the number of permutations $w\in \mathcal{D}_k(I,n)$ that start with a fixed $\tau\in \mathcal{D}_k(I,t)$, and sum over all $\tau\in \mathcal{D}_k(I,t)$ later. From now on, we will think of $\tau$ as being fixed, and we will let $s=s(\tau)$ be the last element of $\tau$, i.e. $s:=\tau(t)$. Now, we construct a random permutation $w\in S_n$ by picking a uniformly random set of $t$ elements from $[n]$ for the (unordered) set of values of $w(1),\ldots, w(t)$, setting their relative order in $w$ to be $\tau$, and then picking a random $v\in \mathcal{D}_k(\emptyset,n-t)$ to be the restriction of $w$ to the last $n-t$ indices. Repeating what we observed before, we get a permutation $w\in \mathcal{D}_k(I,n)$ iff $w(t)<w(t+1)$. So the number of permutations we get this way is

\[\binom{n}{t}f_k(n-t)\mathbb{P}\left(w(t)<w(t+1)\right).\]

Our next goal is to understand $\mathbb{P}(w(t)<w(t+1))$. First off, we essentially know the distribution of $w(t+1)$, as we know the distribution of the first element (Theorem~\ref{thm:fmnasy}) of the restriction to the last $n-t$ indices, and the potential shift by at most $t$ -- depending on the choice of the $t$-element subset of $[n]$ -- is asymptotically small (namely, $O(1/n)$). We will now show that the distribution of $w(t)$ is also simple. The idea is that as $n\to \infty$ but $t$ stays constant, picking $t$ elements of $[n]$ is essentially equivalent to picking $t$ uniform $[0,1]$ random variables (and multiplying each by $n$, and rounding appropriately). We really only care about the value of the $s$th largest of these. In the case of $t$ uniform $[0,1]$ random variables, the $s$th largest has the following well-known distribution.

\begin{proposition}
Let $U_1,U_2,\ldots,U_t$ be independent uniform $[0,1]$ random variables. The $s$th largest of these is a random variable which we denote $U^t_{(s)}$ and call the \emph{$s$th order statistic}. At $y\in [0,1]$, the probability density function of $U^t_{(s)}$ is 
\[\Phi^t_s(u)=\frac{t!}{(s-1)!(t-s)!}y^{s-1}(1-y)^{t-s}.\]
\end{proposition}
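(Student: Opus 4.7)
The plan is to prove this by a standard order statistics argument; there are two natural routes, and I would present the direct density calculation since it is the cleanest.

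First I would fix $y\in(0,1)$ and an infinitesimal $dy>0$, and compute the probability that $U^t_{(s)}\in[y,y+dy]$ via a three-way partition of the indices $\{1,\dots,t\}$. The event in question holds precisely when exactly one of the $U_i$ lies in $[y,y+dy]$, exactly $s-1$ of them lie in $[0,y)$, and the remaining $t-s$ lie in $(y+dy,1]$. Choosing which index lands in the middle interval ($t$ ways), which $s-1$ of the remaining indices land below ($\binom{t-1}{s-1}$ ways), using independence, and collecting the probabilities $dy$, $y^{s-1}$, and $(1-y)^{t-s}+O(dy)$ respectively, this yields
\[\mathbb{P}\bigl(U^t_{(s)}\in[y,y+dy]\bigr)=t\binom{t-1}{s-1}y^{s-1}(1-y)^{t-s}\,dy+O(dy^2).\]
Dividing by $dy$ and sending $dy\to 0$ gives the probability density function, and simplifying $t\binom{t-1}{s-1}=\frac{t!}{(s-1)!(t-s)!}$ produces the claimed $\Phi^t_s$.

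To be fully rigorous rather than infinitesimal, I would instead compute the cumulative distribution function by observing that $U^t_{(s)}\leq y$ iff at least $s$ of the $U_i$ are at most $y$; since each $U_i\leq y$ independently with probability $y$, the count is $\mathrm{Binomial}(t,y)$, giving
\[\mathbb{P}\bigl(U^t_{(s)}\leq y\bigr)=\sum_{j=s}^{t}\binom{t}{j}y^j(1-y)^{t-j}.\]
Differentiating in $y$ termwise and using the telescoping identity
\[\frac{d}{dy}\Bigl(\binom{t}{j}y^j(1-y)^{t-j}\Bigr)=\binom{t}{j}\bigl(jy^{j-1}(1-y)^{t-j}-(t-j)y^j(1-y)^{t-j-1}\bigr),\]
adjacent terms in the sum from $j=s$ to $j=t$ cancel in pairs (using $j\binom{t}{j}=t\binom{t-1}{j-1}$ and $(t-j)\binom{t}{j}=t\binom{t-1}{j}$), leaving only the $j=s$ contribution $t\binom{t-1}{s-1}y^{s-1}(1-y)^{t-s}$, which again simplifies to $\Phi^t_s(y)$.

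There is essentially no obstacle here: the proposition is a classical fact and the only thing to check carefully is the cancellation in the telescoping sum (or, in the infinitesimal argument, that the lower-order $O(dy^2)$ terms from having two or more $U_i$ in the middle interval genuinely vanish, which follows from independence and the fact that those events have probability $O(dy^2)$). Since the paper uses this merely as a stepping stone toward analyzing the distribution of $w(t)$, I would keep the proof short, present only the density argument, and note that one can rewrite $\Phi^t_s(y)=\frac{y^{s-1}(1-y)^{t-s}}{B(s,t-s+1)}$ to emphasize that this is the $\mathrm{Beta}(s,t-s+1)$ density for later reference.
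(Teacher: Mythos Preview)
Your proof is correct and entirely standard; both the infinitesimal density argument and the CDF-differentiation argument are textbook derivations of the Beta law for uniform order statistics. However, the paper does not actually prove this proposition at all: it is introduced with the phrase ``has the following well-known distribution'' and stated without proof, then immediately used as input to Lemma~\ref{lem:os}. So there is nothing to compare against---you have supplied a proof where the paper simply cites common knowledge. If anything, your write-up is more than the paper requires; a one-line reference to the Beta distribution of uniform order statistics would match the paper's treatment.
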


The next lemma says that in the discrete case, i.e. picking a $t$-element subset of $[n]$, the $s$th largest has distribution close to $\Phi^t_s(u)$ (rescaled appropriately).

\begin{lemma}\label{lem:os}
Let $t\geq 3$ and $1\leq s\leq t$ be fixed, and $n\in \mathbb{Z}^+$. Let $\mathcal{Y}$ be a uniform random $t$-element subset of $[n]$, and let $Y_{(s)}^t$ be the $s$th largest element of $\mathcal{Y}$. Then
\[\mathbb{P}\left(Y_{(s)}^t=\ell\right)=\frac{1}{n}\Phi_s^t\left(\frac{\ell}{n}\right)+O(n^{-1.5}).\]
\end{lemma}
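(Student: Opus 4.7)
The plan is to write $\mathbb{P}(Y_{(s)}^t=\ell)$ exactly as a ratio of binomial coefficients and then expand and compare term by term against $\Phi_s^t(\ell/n)/n$. For $Y_{(s)}^t=\ell$ to occur, the uniform random $t$-subset $\mathcal{Y}$ of $[n]$ must contain $\ell$ together with exactly $s-1$ of the elements $\{1,\ldots,\ell-1\}$ and $t-s$ of the elements $\{\ell+1,\ldots,n\}$ (choosing the sides so as to match the density $\Phi_s^t$), giving
\[
\mathbb{P}(Y_{(s)}^t=\ell)=\frac{\binom{\ell-1}{s-1}\binom{n-\ell}{t-s}}{\binom{n}{t}}=\frac{t!}{(s-1)!(t-s)!}\cdot\frac{P_1(\ell)\,P_2(n-\ell)}{Q(n)},
\]
where $P_1(\ell)=(\ell-1)(\ell-2)\cdots(\ell-s+1)$, $P_2(m)=m(m-1)\cdots(m-t+s+1)$, and $Q(n)=n(n-1)\cdots(n-t+1)$ are monic polynomials of degrees $s-1$, $t-s$, and $t$ respectively. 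The target expression is $\Phi_s^t(\ell/n)/n=\frac{t!}{(s-1)!(t-s)!}\cdot\frac{\ell^{s-1}(n-\ell)^{t-s}}{n^{t}}$, with the same combinatorial prefactor.

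The main step is then a polynomial comparison. Since $Q(n)=n^t(1+O(1/n))$, replacing $1/Q(n)$ by $1/n^t$ introduces only a relative error of $O(1/n)$, contributing $O(n^{-2})$ to the absolute error; hence it suffices to bound $|P_1(\ell)P_2(n-\ell)-\ell^{s-1}(n-\ell)^{t-s}|$ uniformly for $1\le\ell\le n-1$. Here $P_1(\ell)-\ell^{s-1}$ is a polynomial of degree at most $s-2$ in $\ell$ with coefficients depending only on $s$ (and is identically zero for $s=1$), and similarly $P_2(m)-m^{t-s}$ is of degree at most $t-s-1$. The telescoping identity
\[
P_1 P_2-\ell^{s-1}(n-\ell)^{t-s}=(P_1-\ell^{s-1})P_2+\ell^{s-1}\bigl(P_2-(n-\ell)^{t-s}\bigr),
\]
together with the bounds $0\le P_1(\ell)\le \ell^{s-1}$ and $0\le P_2(n-\ell)\le (n-\ell)^{t-s}$, then yields
\[
\bigl|P_1(\ell)P_2(n-\ell)-\ell^{s-1}(n-\ell)^{t-s}\bigr|\le C_{s,t}\bigl(\ell^{s-2}(n-\ell)^{t-s}+\ell^{s-1}(n-\ell)^{t-s-1}\bigr)\le C_{s,t}\,n^{t-2},
\]
using $\ell,n-\ell\le n$ and interpreting the offending term as $0$ in the degenerate cases $s=1$ and $s=t$.

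Dividing by $Q(n)\sim n^t$ converts this into an absolute error of $O(n^{-2})$ uniformly in $\ell$, which is comfortably stronger than the claimed $O(n^{-1.5})$. I do not expect any serious obstacle beyond careful bookkeeping of the $s,t$-dependent constants and verifying that the edge cases $\ell\in\{1,\ldots,s-1\}$ and $\ell\in\{n-t+s+1,\ldots,n\}$, where one of $P_1,P_2$ vanishes while $\Phi_s^t(\ell/n)/n$ is itself polynomially small but possibly nonzero, are subsumed by the same polynomial bound; in these cases both sides are already $O(n^{-2})$ in absolute value and the estimate holds trivially.
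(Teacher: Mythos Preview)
Your proof is correct and, like the paper's, starts from the exact identity
\[
\mathbb{P}(Y_{(s)}^t=\ell)=\frac{\binom{\ell-1}{s-1}\binom{n-\ell}{t-s}}{\binom{n}{t}}
=\frac{t!}{(s-1)!(t-s)!}\cdot\frac{P_1(\ell)\,P_2(n-\ell)}{Q(n)},
\]
so the combinatorial setup is identical. The difference is purely in how the falling factorials are compared to pure powers. The paper splits into cases according to whether $\ell\le\sqrt{n}$, $\ell\ge n-\sqrt{n}$, or $\sqrt{n}<\ell<n-\sqrt{n}$, and in the bulk region writes each ratio $(\ell-i)/(n-j)$ as $(\ell/n)(1+O(n^{-0.5}))$, arriving at the stated $O(n^{-1.5})$ bound. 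You instead bound the numerator difference $P_1P_2-\ell^{s-1}(n-\ell)^{t-s}$ directly via the telescoping identity and the degree drop in $P_1-\ell^{s-1}$ and $P_2-(n-\ell)^{t-s}$, getting a uniform $O(n^{t-2})$ bound and hence $O(n^{-2})$ after dividing by $Q(n)$. Your route avoids the case split entirely and yields a sharper error term; the paper's route is perhaps more in the spirit of the later arguments in Section~5, where the same $\sqrt{n}$ threshold reappears. One cosmetic point: your range ``$1\le\ell\le n-1$'' should be $1\le\ell\le n$ (the case $\ell=n$ is relevant when $s=t$), but your polynomial bound covers it without change.
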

\begin{proof} Simply by expanding the brackets and replacing in $\Phi_s^t$, we begin by noting that it suffices to prove the following.
\[\mathbb{P}\left(Y_{(s)}^t=\ell\right)=\frac{1}{n}\frac{t!}{(s-1)!(t-s)!}\left(\left(\frac{\ell}{n}\right)^{s-1}\left(1-\frac{\ell}{n}\right)^{t-s}+O(n^{-0.5})\right).\]
Let's do some counting. If the $s$th largest element is $\ell$, then there are a total of $\binom{\ell-1}{s-1}$ choices for the bottom $s-1$ elements and $\binom{n-\ell}{t-s}$ choices for the top $t-s$ elements. Hence,
\[\mathbb{P}\left(Y_{(s)}^t=\ell\right)=\frac{\binom{\ell-1}{s-1}\binom{n-\ell}{t-s}}{\binom{n}{t}}\]\[=\frac{1}{n}\frac{t!}{(s-1)!(t-s)!}\frac{(\ell-1)(\ell-2)\cdots(\ell-s+1)(n-\ell)(n-\ell-1)\cdots(n-\ell-t+s+1)}{(n-1)(n-2)\cdots(n-t+1)}.\]

We note the identical prefactor in the lemma and the above expression, and we proceed to compare the last terms of the two products. Namely, it remains to show that
\[\frac{(\ell-1)(\ell-2)\cdots(\ell-s+1)(n-\ell)(n-\ell-1)\cdots(n-\ell-t+s+1)}{(n-1)(n-2)\cdots(n-t+1)}\]\[=\left(\frac{\ell}{n}\right)^{s-1}\left(1-\frac{\ell}{n}\right)^{t-s}+O(n^{-0.5}).\]
If $\ell\leq \sqrt{n}$ and $s\geq 2$, we are automatically done since $\frac{\ell-1}{n-1}$ on the left-hand side and $\frac{\ell}{n}$ on the right-hand side already imply that both terms are $O(n^{-0.5})$. If $\ell\leq \sqrt{n}$ and $s=1$, then $\frac{n-\ell-i}{n-j}=\left(1-\frac{\ell}{n}\right)\left(1+O(n^{-1})\right)$, and the product of finitely many such terms still gives a multiplicative error term of $(1+O(n^{-1}))$, which is better than the desired bound. By symmetry, the case $\ell\geq n-\sqrt{n}$ is also covered now.

As for the case $\sqrt{n}< \ell<n-\sqrt{n}$, we then have $\frac{\ell-i}{n-j}=\frac{\ell}{n}(1+O(n^{-0.5}))$, and similarly for $\frac{n-\ell-i}{n-j}$. The finitely many $(1+O(n^{-0.5}))$ terms still multiply to a $(1+O(n^{-0.5}))$ term. This finishes the case check and the proof of the lemma.
\end{proof}

We note that with the notation from before, i.e. $\tau$ being the fixed relative order for the first $t$ indices, and $v$ being the $k$-descent-avoiding restriction to the last $n-t$ indices, $v(1)\leq w(t)\leq v(1)+t$, so
\[w(t)\leq v(1)\implies w(t)<w(t+1)\implies w(t)\leq v(1)+t,\]
from which
\[\mathbb{P}(w(t)\leq v(1))\leq \mathbb{P}(w(t)<w(t+1))\leq \mathbb{P}(w(t)\leq v(1)+t).\]
This is useful as it lets us deal with the (otherwise) inconvenient detail that we can understand $w(t)$ and $v(1)$, but we wish to compare $w(t)$ and $w(t+1)$, and $w(t+1)\neq v(1)$ (instead, $w(t+1)$ can be anything in the range $v(1),v(1)+1,\ldots, v(1)+t)$. We now finally get to use our machinery for $w(t)$ and $v(1)$, i.e. Lemma~\ref{lem:os} and Theorem~\ref{thm:fmnasy}. We start from
\[\mathbb{P}\left(w(t)\leq v(1)\right)=\sum_{m=1}^{n-t}\frac{f_k(m,n-t)}{f_k(n-t)}\sum_{\ell=1}^{m}\mathbb{P}\left(Y^t_{(s)}=\ell\right).\]
Applying Lemma~\ref{lem:os}, we get
\[\mathbb{P}\left(w(t)\leq v(1)\right)=\sum_{m=1}^{n-t}\frac{f_k(m,n-t)}{f_k(n-t)}\sum_{\ell=1}^{m}\frac{1}{n}\Phi_s^t\left(\frac{\ell}{n}\right)+O(n^{-1.5}).\]
We sum the $O(n^{-1.5})$ terms up into a $O(n^{-0.5})$ and note that the remaining inner sum is a Riemann sum for the integral $\int_0^{\frac{m}{n}}\Phi_s^t(y)$. Since $\Phi_s^t(y)$ is a polynomial, it is continuously differentiable on $[0,1]$, and by compactness of $[0,1]$ its derivative is bounded, so the difference between our integral and our Riemann sum is $n\cdot O(1/n^2)=O(1/n)$. Hence, we get
\[\mathbb{P}\left(w(t)\leq v(1)\right)=O(n^{-0.5})+\sum_{m=1}^{n-t}\frac{f_k(m,n-t)}{f_k(n-t)}\int_0^{\frac{m}{n}}\Phi_s^t(y).\]

We now use Theorem~\ref{thm:fmnasy}, getting
\[\mathbb{P}\left(w(t)\leq v(1)\right)=O(n^{-0.5})+\sum_{m=1}^{n-t}\frac{1}{n-t}\varphi_k\left(\frac{m}{n-t}\right)\left(1+O\left(n^{-0.49}\right)\right)\int_0^{\frac{m}{n}}\Phi_s^t(y).\]
We again pull out a total additive error term of $(n-t)\cdot\frac{1}{n-t}\cdot O\left(n^{-0.49}\right)=O\left(n^{-0.49}\right)$, which we can do since compactness of $[0,1]$ implies that $\varphi_k(\frac{m}{n})\int_0^{\frac{m}{n}}\Phi_s^t(y)$ is bounded. We then again have a Riemann sum, this time for the outer integral in $\int_0^1\varphi_k(x)\int_{0}^{x\frac{n-t}{n}} \Phi_s^t(y)$. Since $\Phi_s^t$ is continuous and bounded (by compactness of $[0,1]$) and $\varphi_k$ is continuously differentiable and hence has bounded derivative, the Riemann summed function $\varphi_k(x)\int_{0}^{x\frac{n-t}{n}} \Phi_s^t(y)$ is differentiable and has bounded derivative on $[0,1]$. So the difference between our Riemann sum and our integral is $O(1/n)$ as before. Hence,

\[\mathbb{P}\left(w(t)\leq v(1)\right)=O(n^{-0.49})+\int_0^1\varphi_k(x)\int_{0}^{x\frac{n-t}{n}} \Phi_s^t(y).\]

Finally, by boundedness of $\varphi_k(x)\Phi_s^t(y)$ in the compact $[0,1]^2$, the integral over a measure $O(1/n)$ subset of $[0,1]^2$ is itself $O(1/n)$, so
\[\mathbb{P}\left(w(t)\leq v(1)\right)=O(n^{-0.49})+\int_0^1\varphi_k(x)\int_{0}^{x}\Phi_s^t(y).\]

By an analogous argument, we can also get
\[\mathbb{P}(w(t)\leq v(1)+t)=O(n^{-0.49})+\int_0^1\varphi_k(x)\int_{0}^{x}\Phi_s^t(y).\]
Combining these two, we get
\[O(n^{-0.49})+\int_0^1\varphi_k(x)\int_{0}^{x}\Phi_s^t(y)\leq \mathbb{P}(w(t)<w(t+1))\leq O(n^{-0.49})+\int_0^1\varphi_k(x)\int_{0}^{x}\Phi_s^t(y).\]
Hence,
\[\mathbb{P}(w(t)<w(t+1))=O(n^{-0.49})+\int_0^1\varphi_k(x)\int_{0}^{x}\Phi_s^t(y).\]
This integral is a positive constant (it is nonzero since $\varphi_k(x)$ is bounded below and $\Phi^t_s(y)$ integrates to $1$), so we can make the error term multiplicative:
\[\mathbb{P}(w(t)<w(t+1))=\left(1+O(n^{-0.49})\right)+\int_0^1\varphi_k(x)\int_{0}^{x}\Phi_s^t(y).\]

Finally coming back to what we promised a long time ago, we sum over all $\tau\in \mathcal{D}_k(I,t)$, and arrive at the following theorem, which is an integral formula for $d_k(I,n)$.

\begin{theorem}\label{thm:precdasy}
For fixed $k\geq 3$, fixed finite $I\subseteq \mathbb{Z}^+$, and asymptotically in $n\in \mathbb{Z}^+$,
\[d_k(I,n)=\left(1+O\left(n^{-0.49}\right)\right)\binom{n}{t}f_k(n-t)\int_0^1\varphi_k(x)\int_{0}^{x}\sum_{\tau\in \mathcal{D}_k(I,t)}\Phi_{\tau(t)}^t(y)\hspace{0.5mm}\mathrm{d}y\hspace{0.5mm}\mathrm{d}x.\]
\end{theorem}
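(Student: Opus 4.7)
The proof naturally organizes around decomposing a permutation $w\in \mathcal{D}_k(I,n)$ into its restriction to the first $t$ positions (whose relative order is some $\tau\in \mathcal{D}_k(I,t)$) and its restriction to the last $n-t$ positions (some $v\in \mathcal{D}_k(\emptyset,n-t)$). My plan is to fix $\tau$, and hence $s:=\tau(t)$, count the number of $w\in \mathcal{D}_k(I,n)$ with this initial relative order, and then sum over the finitely many $\tau$.

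With $\tau$ fixed, every such $w$ is constructed by (i) choosing a $t$-element subset $V\subseteq [n]$ to be the image of $\{1,\ldots,t\}$, which gives $\binom{n}{t}$ choices, (ii) placing the elements of $V$ into positions $1,\ldots,t$ according to the relative order $\tau$, and (iii) choosing some $v\in \mathcal{D}_k(\emptyset,n-t)$ to fill the remaining positions, which gives $f_k(n-t)$ choices. Because $\tau$ already ends with a $k$-descent, the only possible \emph{unprescribed} $k$-descent occurs starting at some index in $\{\max(I)+1,\ldots,t\}$, and a short check shows this happens if and only if $w(t)>w(t+1)$. Thus the count equals $\binom{n}{t}f_k(n-t)\,\mathbb{P}(w(t)<w(t+1))$, where the probability is taken with $V$ and $v$ drawn independently and uniformly.

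To evaluate this probability I would exploit the sandwich $\mathbb{P}(w(t)\leq v(1))\leq \mathbb{P}(w(t)<w(t+1))\leq \mathbb{P}(w(t)\leq v(1)+t)$, which holds because $w(t+1)$ is a shift of $v(1)$ by at most $t$. For the lower bound, expand $\mathbb{P}(w(t)\leq v(1))$ as a double sum over $v(1)=m$ and $w(t)=\ell$. Applying Theorem~\ref{thm:fmnasy} to $f_k(m,n-t)/f_k(n-t)$ and Lemma~\ref{lem:os} to the distribution of $w(t)$, which is the $s$th order statistic of $V$, each inner sum becomes a Riemann sum for $\int_0^{m/n}\Phi_s^t(y)\,dy$ and the outer sum becomes a Riemann sum for $\int_0^1\varphi_k(x)\int_0^{x(n-t)/n}\Phi_s^t(y)\,dy\,dx$. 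Continuity of $\Phi_s^t$ and $\varphi_k$ on the compact $[0,1]$ keeps each Riemann error $O(n^{-1})$; the $O(n^{-1.5})$ slack in Lemma~\ref{lem:os} sums over $n$ values of $\ell$ to $O(n^{-0.5})$; and replacing the inner upper limit $x(n-t)/n$ by $x$ changes the integral by $O(n^{-1})$. The corresponding upper bound is identical up to an $O(n^{-1})$ shift in $\ell$, so both sides of the sandwich equal $\int_0^1\varphi_k(x)\int_0^x\Phi_s^t(y)\,dy\,dx+O(n^{-0.49})$.

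Finally I would sum over $\tau\in \mathcal{D}_k(I,t)$ (a bounded number of terms since $|\mathcal{D}_k(I,t)|\leq t!$ is a constant in $n$) to replace $\Phi_s^t$ by $\sum_\tau \Phi^t_{\tau(t)}$, and observe that the resulting integral is a strictly positive constant -- it is nonzero because $\varphi_k>0$ on $[0,1]$ by Proposition~\ref{prop:decr} and Theorem~\ref{thm:fmnasy}, and $\Phi_s^t$ integrates to $1$ -- so the additive $O(n^{-0.49})$ can be absorbed into a multiplicative factor $(1+O(n^{-0.49}))$. The main obstacle is simply bookkeeping: one must verify that every error source (the $n^{-0.49}$ in Theorem~\ref{thm:fmnasy}, the pile of $n^{-1.5}$ errors from Lemma~\ref{lem:os}, two Riemann approximations, the sandwich shift by $t$, and an upper-limit truncation) stays strictly below the $n^{-0.49}$ target, with the error from Theorem~\ref{thm:fmnasy} being the tight one.
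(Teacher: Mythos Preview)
Your proposal is correct and follows essentially the same approach as the paper: the same decomposition into a fixed $\tau\in\mathcal{D}_k(I,t)$ and a random $v\in\mathcal{D}_k(\emptyset,n-t)$, the same sandwich $\mathbb{P}(w(t)\le v(1))\le\mathbb{P}(w(t)<w(t+1))\le\mathbb{P}(w(t)\le v(1)+t)$, the same use of Lemma~\ref{lem:os} and Theorem~\ref{thm:fmnasy} followed by two Riemann-sum approximations, and the same final positivity argument to make the error multiplicative before summing over $\tau$. The only cosmetic difference is that the paper cites the lower bound $\varphi_k(x)\ge\varphi_k(1)>0$ directly from the analysis inside the proof of Theorem~\ref{thm:fmnasy} rather than via Proposition~\ref{prop:decr}.
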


From Theorem~\ref{thm:precdasy} and Theorem~\ref{thm:nasy}, proving Theorem~\ref{thm:dasy} is easy algebra.

\begin{proof}[Proof of Theorem~\ref{thm:dasy}] 
We start with \[d_k(I,n)=\left(1+O\left(n^{-0.49}\right)\right)\binom{n}{t}f_k(n-t)\int_0^1\varphi_k(x)\int_{0}^{x}\sum_{\tau\in \mathcal{D}_k(I,t)}\Phi_{\tau(t)}^t(y)\hspace{0.5mm}\mathrm{d}y\hspace{0.5mm}\mathrm{d}x.\]
Plugging in $\binom{n}{t}=(1+O(1/n))\frac{n^t}{t!}$ and Theorem~\ref{thm:nasy}, we get
\[d_k(I,n)=\left(1+O\left(n^{-0.49}\right)\right)\left(1+O(1/n)\right)\frac{n^t}{t!}c_k r_k^{n-t}(n-t)!\left(1+O(\gamma_k^n)\right)\int_0^1\varphi_k(x)\int_{0}^{x}\sum_{\tau\in \mathcal{D}_k(I,t)}\Phi_{\tau(t)}^t(y)\hspace{0.5mm}\mathrm{d}y\hspace{0.5mm}\mathrm{d}x.\]
Only keeping the dominating error term and then using $n!=(n-t)!n^t\left(1+O(1/n)\right)$, and then again keeping the dominating error term, we further get
\[d_k(I,n)=\left(1+O\left(n^{-0.49}\right)\right)c_k n!r_k^{n}\frac{1}{t! r_k^t}\int_0^1\varphi_k(x)\int_{0}^{x}\sum_{\tau\in \mathcal{D}_k(I,t)}\Phi_{\tau(t)}^t(y)\hspace{0.5mm}\mathrm{d}y\hspace{0.5mm}\mathrm{d}x.\]
Using Theorem~\ref{thm:nasy} and then only keeping the dominating error term once more, we arrive at
\[d_k(I,n)=\left(1+O\left(n^{-0.49}\right)\right)f_k(n)\frac{1}{t! r_k^t}\int_0^1\varphi_k(x)\int_{0}^{x}\sum_{\tau\in \mathcal{D}_k(I,t)}\Phi_{\tau(t)}^t(y)\hspace{0.5mm}\mathrm{d}y\hspace{0.5mm}\mathrm{d}x.\]

This is what we sought to prove, with
\[c_{I,k}=\frac{1}{t! r_k^t}\int_0^1\varphi_k(x)\int_{0}^{x}\sum_{\tau\in \mathcal{D}_k(I,t)}\Phi_{\tau(t)}^t(y)\hspace{0.5mm}\mathrm{d}y\hspace{0.5mm}\mathrm{d}x.\]

\end{proof}

\subsection{A somewhat simpler formula for $c_{I,k}$}
We can write down something simpler for $c_{I,k}$. Namely, we will prove the following.
\begin{proposition}\label{prop:niceform}
Let $k\geq 3$, let $I\subseteq \mathbb{Z}^+$ be a finite set $t=\max(I)+k-1$, and let $r_k\in \mathbb{R}$ be as given in Theorem~\ref{thm:nasy}. Then
\[c_{I,k}=\frac{1}{t! r_k^t}\int_0^1\varphi_k(x)\int_{0}^{x}\sum_{s=1}^t d_k(r_t(I),t+1-s,t)\Phi_{s}^t(y)\hspace{0.5mm}\mathrm{d}y\hspace{0.5mm}\mathrm{d}x.\]
\end{proposition}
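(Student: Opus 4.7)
The plan is to start from the integral expression for $c_{I,k}$ already obtained in the proof of Theorem~\ref{thm:dasy}, namely
\[c_{I,k}=\frac{1}{t! r_k^t}\int_0^1\varphi_k(x)\int_{0}^{x}\sum_{\tau\in \mathcal{D}_k(I,t)}\Phi_{\tau(t)}^t(y)\hspace{0.5mm}\mathrm{d}y\hspace{0.5mm}\mathrm{d}x,\]
and simply rewrite the inner sum $\sum_{\tau \in \mathcal{D}_k(I,t)} \Phi^t_{\tau(t)}(y)$ by grouping together permutations with the same value of $\tau(t)$. Specifically, for each $s \in [t]$, set $N(I,s) := |\{\tau \in \mathcal{D}_k(I,t) : \tau(t) = s\}|$; then the inner sum equals $\sum_{s=1}^t N(I,s)\, \Phi^t_s(y)$, and the target identity reduces to the combinatorial claim $N(I,s) = d_k(r_t(I), t+1-s, t)$.

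To prove this claim I would use the reverse-complement map $\mathrm{rc}\colon S_t \to S_t$ defined earlier in the paper, $(\mathrm{rc}(\tau))(i) = t+1-\tau(t+1-i)$. By the observation preceding Remark~\ref{rmk:bij}, $D_k(\mathrm{rc}(\tau)) = r_t(D_k(\tau))$, so $\mathrm{rc}$ restricts to a bijection $\mathcal{D}_k(I,t) \to \mathcal{D}_k(r_t(I), t)$. Moreover, $(\mathrm{rc}(\tau))(1) = t+1-\tau(t)$, so the constraint $\tau(t) = s$ translates exactly into the constraint $(\mathrm{rc}(\tau))(1) = t+1-s$. Therefore
\[N(I,s) = |\{\sigma \in S_t : D_k(\sigma) = r_t(I),\ \sigma(1) = t+1-s\}| = d_k(r_t(I), t+1-s, t),\]
by the very definition of the parametrized $k$-descent function.

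Substituting this into the formula for $c_{I,k}$ immediately yields the expression claimed in Proposition~\ref{prop:niceform}. I do not anticipate any real obstacle here: the argument is essentially repackaging the earlier formula via the bijection $\mathrm{rc}$, the only subtle point being to make sure the correspondence between last and first elements introduces the shift $s \mapsto t+1-s$ rather than, say, $s \mapsto t-s$ or $s \mapsto s$. A small sanity check with any explicit $I$ (for instance $I = \emptyset$, where $t = k-1$ and both sides reduce to $\sum_{s} f_k(t+1-s, t)\Phi^t_s(y)/(t!\,r_k^t)$ after the outer integration) can confirm the indexing is correct.
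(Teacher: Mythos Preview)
Your proposal is correct and follows essentially the same approach as the paper: group the sum $\sum_{\tau\in\mathcal{D}_k(I,t)}\Phi^t_{\tau(t)}(y)$ by the value of $\tau(t)$, and use the reverse-complement bijection to identify the number of $\tau$ with $\tau(t)=s$ as $d_k(r_t(I),t+1-s,t)$. The only minor quibble is your sanity check with $I=\emptyset$, since then $t=\max(I)+k-1$ is undefined; but this does not affect the argument itself.
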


This might appear more complicated than the previous expression for $c_{I,k}$, but we have replaced a sum of possibly up to $t!$ terms with a sum of just $t$ terms which can be understood reasonably well (more on this after the proof).

\begin{proof}
Consider
\[\sum_{\tau\in \mathcal{D}_k(I,t)}\Phi_{\tau(t)}^t(y).\]
Let us pick some $s$ and count the number of terms with $\tau(t)=s$. In other words, we are counting the number of permutations in $\mathcal{D}_k(I,t)$ that end with $s$. Under reverse-complementation, $\mathcal{D}_k(I,t)$ bijects with $\mathcal{D}_k(r_t(I),t)$ (see Subsection~\ref{subsec:general} for a reminder on this notation). Under the same map, the subset of $\mathcal{D}_k(I,t)$ of permutations ending with $s$ bijects with $\mathcal{D}_k(r_t(I),t+1-s,t)$. This gives the coefficients in our regrouped sum:
\[\sum_{\tau\in \mathcal{D}_k(I,t)}\Phi_{\tau(t)}^t(y)=\sum_{s=1}^t d_k(r_t(I),t+1-s,t)\Phi_{s}^t(y).\]
Making this replacement in 
\[c_{I,k}=\frac{1}{t! r_k^t}\int_0^1\varphi_k(x)\int_{0}^{x}\sum_{\tau\in \mathcal{D}_k(I,t)}\Phi_{\tau(t)}^t(y)\hspace{0.5mm}\mathrm{d}y\hspace{0.5mm}\mathrm{d}x\]
gives us the desired result.
\end{proof}

We finish this subsection by mentioning that the coefficients $d_k(r_t(I),t+1-s,t)$ can be found using a dynamic programming approach similar to that in Subsection~\ref{subsec:genfast}. This makes $c_{I,k}$ efficiently computable, assuming one can efficiently take $n$ numerical integrals.


\subsection{Finitely many cases of asymptotic down-up-down-up}

Zhu \cite{zhu2019enumerating} made the following conjecture.

\begin{conjecture}[Down-Up-Down-Up Conjecture \cite{zhu2019enumerating}]\label{conj:dudu}

For any $n\in \mathbb{Z}^+$, 
\[d_3(\{1\},n)>d_3(\{2\},n)<d_3(\{3\},n)>d_3(\{4\},n)<\cdots,\]
where the sequence goes up to $\lceil n/2\rceil$.
\end{conjecture}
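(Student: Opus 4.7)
The conjecture asserts a \emph{pointwise} alternating inequality that must hold for every $n\in\mathbb{Z}^+$ and every index $j\le\lceil n/2\rceil$ simultaneously, and so in particular for values of $j$ which grow with $n$. My plan is a hybrid asymptotic–combinatorial strategy; it attacks the full conjecture but (honestly) isolates a specific analytic obstruction.

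First, I would upgrade Theorem~\ref{thm:precdasy} and Proposition~\ref{prop:niceform} to a version that is \emph{uniform in} $I$ so long as $t=\max(I)+k-1$ is not too large compared with $n$. Tracing through the proof of Theorem~\ref{thm:precdasy}, the multiplicative error term $1+O(n^{-0.49})$ comes from (a) replacing two Riemann sums by their integrals and (b) invoking Theorem~\ref{thm:fmnasy}. The derivative bounds that control the Riemann-sum errors depend polynomially on $t$ (through the polynomial $\Phi_s^t$), so by a careful bookkeeping one should be able to obtain
\[
d_3(\{j\},n)=c_{\{j\},3}f_3(n)\bigl(1+O(t^{C}n^{-0.49})\bigr)
\]
for an absolute constant $C$. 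Since we only care about $j\le \lceil n/2\rceil$, i.e.\ $t\le n/2+2$, we can afford polynomial factors in $t$ and still absorb them into an error of the form $n^{-\alpha}$ for some small $\alpha>0$, provided $j$ does not grow too quickly. A slightly more refined argument, using that the relevant integrals involve $\Phi_s^t$ and $\varphi_3$ on $[0,1]$ where $\varphi_3$ is bounded away from $0$, should handle this.

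Second, I would analyze the constants $c_{\{j\},3}$ themselves via Proposition~\ref{prop:niceform}:
\[
c_{\{j\},3}=\frac{1}{t!\,r_3^t}\int_0^1\varphi_3(x)\int_0^x\sum_{s=1}^t d_3(r_t(\{j\}),t+1-s,t)\Phi_s^t(y)\,dy\,dx,\qquad t=j+2.
\]
The inner coefficients $d_3(r_t(\{j\}),t+1-s,t)$ count $3$-descent-patterned permutations of length $t$, and they depend sensitively on the parity of $j$: the set $r_t(\{j\})=\{t+1-k-j\}$ shifts by one under $j\mapsto j+1$, and an increment by $1$ of the descent position permutes which $s$-th order statistic is favored for the last entry. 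I would combine this parity observation with the explicit form $\varphi_3(x)=\tfrac{4\pi}{9}e^{-\pi x/(3\sqrt3)}\sin((x+1)\pi/3)$ and an inductive comparison (in $j$) of the weighted sums of $\Phi_s^t$ to prove the two inequalities $c_{\{2j-1\},3}>c_{\{2j\},3}$ and $c_{\{2j\},3}<c_{\{2j+1\},3}$ together with a lower bound on the \emph{gap} $|c_{\{j\},3}-c_{\{j+1\},3}|$ that is at least polynomially small in $t$. Combined with Step~1 this would yield the conjecture for all $n$ at least $N_0(j)$, with $N_0(j)$ polynomial in $j$.

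Third, to get \emph{every} $n$ rather than "sufficiently large" $n$, I would split the range of $j$. For $j$ bounded (say $j\le n^{1/3}$), the previous two steps give the result for all $n\ge N_0$, and the remaining finitely many $n$ are handled by direct computation via Theorem~\ref{thm:otherrec} and the algorithm in Subsection~\ref{subsec:genfast}. For the regime $j$ comparable to $n/2$, i.e.\ a descent near the center of the permutation, I would look for a direct combinatorial injection $\mathcal{D}_3(\{j+1\},n)\hookrightarrow\mathcal{D}_3(\{j\},n)$ for odd $j$ (and the reverse for even $j$), built by modifying $w$ near its unique $3$-descent (for instance, transposing $w(j)$ and $w(j+1)$ whenever this creates a $3$-descent at position $j$ without creating another). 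The \textbf{main obstacle} is precisely this last step: the asymptotic machinery of the paper is fundamentally insufficient to cover the regime where $j$ is a positive fraction of $n$, because the uniform error bound in Step~1 degrades (the gap $c_{\{j\},3}-c_{\{j+1\},3}$ shrinks while the error from replacing Riemann sums by integrals does not). I expect that Steps~1–3 together suffice only to establish the conjecture for $j=o(n^{\alpha})$ for some $\alpha<1$, and that closing the gap to all $j\le\lceil n/2\rceil$ requires a genuinely combinatorial injection argument near the center of the permutation, which is the hardest part of the conjecture and the reason it remains open.
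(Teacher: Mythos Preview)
The paper does \emph{not} prove this statement: it is explicitly a conjecture, and the paper's only progress on it is the theorem immediately following, which establishes merely the first three inequalities $d_3(\{1\},n)>d_3(\{2\},n)<d_3(\{3\},n)>d_3(\{4\},n)$ for all sufficiently large $n$, by numerically evaluating $c_{\{1\},3},\ldots,c_{\{4\},3}$ from Proposition~\ref{prop:niceform}. The conjecture is then listed among the open problems in the final section. So there is no ``paper's own proof'' to compare against, and your proposal is attempting something strictly more ambitious than anything the paper achieves.

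That said, your Step~2 contains a concrete error. You write that $r_t(\{j\})=\{t+1-k-j\}$ ``shifts by one under $j\mapsto j+1$'' and that this drives a parity-sensitive change in the coefficients $d_3(r_t(\{j\}),t+1-s,t)$. But here $t=\max(I)+k-1=j+2$, so by the definition of the $n$-reverse,
\[
r_t(\{j\})=\{t+2-k-j\}=\{(j+2)+2-3-j\}=\{1\}
\]
for \emph{every} $j$. The reversed descent set is always $\{1\}$; only the ambient length $t=j+2$ changes with $j$. So the alternation of $c_{\{j\},3}$ cannot be explained by a shift in the position of $r_t(\{j\})$, and any parity structure must come from how the coefficients $d_3(\{1\},t+1-s,t)$, the order-statistic densities $\Phi_s^t$, and the prefactor $1/(t!\,r_3^t)$ jointly depend on $t$. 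Your proposed mechanism for the alternating inequality is therefore not the right one, and Step~2 as written does not go through.

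Your overall diagnosis is nonetheless accurate: you correctly identify that the paper's asymptotic machinery degrades when $j$ is a positive fraction of $n$ (the error terms in Theorem~\ref{thm:precdasy} are not uniform in $t$), and that closing the full conjecture would require either a much sharper uniform estimate or a direct combinatorial injection. That is consistent with the paper's own assessment that the conjecture remains open.
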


By bounding $c_{\{1\},3}, c_{\{2\},3}, \ldots$ using numerical integration, we can prove the following partial result towards the Down-Up-Down-Up Conjecture. It is partial in the sense that it only proves the conjectured inequality between $d_3(\{i\},n)$ and $d_3(\{i+1\},n)$ for a finite set of $i$, and also in the sense that our result is only for large enough $n$.

\begin{theorem}
There is $N\in \mathbb{Z}^+$ such that for all $n\geq N$, $d_3(\{1\},n)>d_3(\{2\},n)<d_3(\{3\},n)>d_3(\{4\},n)$.
\end{theorem}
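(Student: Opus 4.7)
The plan is to reduce the theorem to four strict inequalities among the constants $c_{\{1\},3}, c_{\{2\},3}, c_{\{3\},3}, c_{\{4\},3}$ produced by Theorem~\ref{thm:dasy}, and then verify those inequalities by evaluating the integral formula for $c_{I,3}$ to high enough precision (with certified error) that the sign of each pairwise difference is pinned down. Specifically, by Theorem~\ref{thm:dasy},
\[
\frac{d_3(\{i\},n)}{d_3(\{i+1\},n)}=\frac{c_{\{i\},3}}{c_{\{i+1\},3}}\bigl(1+O(n^{-0.49})\bigr),
\]
so whenever $c_{\{i\},3}\neq c_{\{i+1\},3}$ there is a threshold $N_i$ beyond which the inequality between $d_3(\{i\},n)$ and $d_3(\{i+1\},n)$ matches the inequality between $c_{\{i\},3}$ and $c_{\{i+1\},3}$. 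Taking $N=\max(N_1,N_2,N_3)$ then finishes the proof, provided we can show
\[
c_{\{1\},3}>c_{\{2\},3},\qquad c_{\{2\},3}<c_{\{3\},3},\qquad c_{\{3\},3}>c_{\{4\},3}.
\]

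To evaluate (or bound) these four constants, I would use Proposition~\ref{prop:niceform}. For $I=\{i\}$ we have $t=i+2$, and a direct check shows $r_{i+2}(\{i\})=\{1\}$, so
\[
c_{\{i\},3}=\frac{1}{(i+2)!\,r_3^{\,i+2}}\int_0^1\varphi_3(x)\int_0^x\sum_{s=1}^{i+2}d_3(\{1\},i+3-s,i+2)\,\Phi_s^{i+2}(y)\,\mathrm{d}y\,\mathrm{d}x.
\]
The coefficients $d_3(\{1\},i+3-s,i+2)$ for $i\in\{1,2,3,4\}$ are a finite list of small integers that can be computed once and for all, either by direct enumeration or by the dynamic programming described in Subsection~\ref{subsec:genfast}. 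Since $\Phi_s^t(y)$ is a polynomial and $\varphi_3(x)=\frac{4\pi}{9}e^{-\pi x/(3\sqrt{3})}\sin((x+1)\pi/3)$ is an explicit elementary function, each inner integral $\int_0^x\Phi_s^t(y)\,\mathrm{d}y$ is a polynomial in $x$, and each outer integral reduces to a finite linear combination of integrals of the form $\int_0^1 x^j e^{-\pi x/(3\sqrt{3})}\sin((x+1)\pi/3)\,\mathrm{d}x$. These can be evaluated in closed form by repeated integration by parts, giving exact symbolic expressions for all four $c_{\{i\},3}$ (or equivalently, rational combinations of $\pi$, $\sqrt{3}$, $e^{\pm \pi/(3\sqrt{3})}$, etc.).

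With exact expressions in hand, the three desired strict inequalities become comparisons of specific algebraic-transcendental numbers, which can be certified by computing each side to, say, a handful of decimal digits with interval arithmetic. The main obstacle I anticipate is purely one of bookkeeping: the integrands grow in complexity with $i$ (since $t=i+2$ and the sum over $s$ has $i+2$ summands), and one has to be careful to carry out the integration-by-parts reductions and the enumeration of the coefficients $d_3(\{1\},i+3-s,i+2)$ without error. Once the four constants are produced to verified precision, the theorem follows immediately from the reduction in the first paragraph; one then sets $N$ to be the maximum of the finitely many thresholds $N_1,N_2,N_3$ coming from Theorem~\ref{thm:dasy}.
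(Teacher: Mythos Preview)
Your proposal is correct and follows essentially the same approach as the paper: reduce via Theorem~\ref{thm:dasy} to strict inequalities among the constants $c_{\{i\},3}$, then evaluate those constants using the integral formula of Proposition~\ref{prop:niceform}. The paper simply reports the numerical values of the ratios $c_{\{i\},3}/c_{\{i+1\},3}$ obtained from numerical integration software, whereas you spell out in more detail how the computation could be made rigorous; otherwise the arguments coincide.
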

\begin{proof}
Using numerical integration software to evaluate the expression for $c_{I,k}$ in Proposition~\ref{prop:niceform}, we found that 
\[\lim_{n\to \infty} \frac{d(\{1\},n)}{d(\{2\},n)}\approx 1.132101, \hspace{5mm} \lim_{n\to \infty} \frac{d(\{2\},n)}{d(\{3\},n)}\approx 0.826993, \hspace{5mm} \lim_{n\to \infty}\frac{d(\{3\},n)}{d(\{4\},n)}\approx 1.043244.\]
\end{proof}


\section{Asymptotic independence of the first and last element of a $k$-descent-avoiding permutation}\label{sec:joint}
Theorem~\ref{thm:fmnasy} gives the asymptotic distribution of the first element of $k$-descent-avoiding permutations. Namely, this distribution is given by $\varphi_k(x)$. Upon reverse-complementing, we also get that the distribution of the last element is given by $\varphi_k(1-x)$. However, to show equidistribution (Theorem~\ref{thm:equistrong}) in the next section, we will want to make use of the joint distribution of the first and last element. It is clear that the first and last element cannot both be $m$. Other than that, the first and last element turn out to be asymptotically independent. In this section, we state and prove this -- the rigorous statement is Theorem~\ref{thm:joint}. The structure and content of this section closely resemble those of Section~\ref{sec:mnasy}. We start with a definition.

\begin{definition}
For $k,n,m_1,m_2\in \mathbb{Z}^+$ with $1\leq m_1,m_2\leq n$, we let $f_k(m_1,m_2,n)$ be the number of $k$-descent-avoiding permutations $w\in S_n$ such that $w(1)=m_1$ and $w(n)=m_2$.
\end{definition}

\subsection{A proposition on $f_k(m_1,m_2,n)$} We will now state a proposition which is an analog of Proposition~\ref{prop:fmn}, except instead of conditioning on just the value of the first element, we now simultaneously condition on values for the first and last element, $m_1$ and $m_2$. Instead of an equality, we now have to settle for two inequalities, but the lower and upper bound will be relatively close.

\begin{proposition}\label{prop:fm1m2}
For $k\geq 3$, $n,m_1,m_2\in \mathbb{Z}^+$ with $1\leq m_1\leq m_2\leq n$, we have the following inequalities

\[ \sum_{\substack{\ell\\1\leq \ell \leq m_1\\ \ell\equiv 1 \pmod{k}}}\binom{m_1-2}{\ell-1}f_k\left(\min(n-\ell,n+1-m_2),n-\ell\right)\]\[ -\sum_{\substack{\ell\\k\leq \ell \leq m_1\\ \ell\equiv 0 \pmod{k}}}   \binom{m_1-1}{\ell-1}f_k\left(\max(1,n-\ell+1-m_2),n-\ell\right)\]
\[\leq f_k(m_1,m_2,n)\leq\]
\[ \sum_{\substack{\ell\\1\leq \ell \leq m_1\\ \ell\equiv 1 \pmod{k}}}\binom{m_1-1}{\ell-1}f_k\left(\max(1,n-\ell+1-m_2),n-\ell\right)\]\[-\sum_{\substack{\ell\\k\leq \ell \leq m_1\\ \ell\equiv 0 \pmod{k}}}\binom{m_1-2}{\ell-1}f_k\left(\min(n-\ell,n+1-m_2),n-\ell\right)\]
\end{proposition}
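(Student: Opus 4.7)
The plan is to adapt the inclusion-exclusion from the proof of Proposition~\ref{prop:fmn} by also conditioning on the endpoint $w(n)=m_2$, working under the main case $m_1<m_2$ (the case $m_1=m_2$ forces $f_k(m_1,m_2,n)=0$ for $n\ge 2$ and can be handled separately). For each admissible $\ell\in\{1,k,k+1,2k,2k+1,\ldots\}$ with $\ell\le m_1$, I count the permutations $w\in S_n$ with $w(1)=m_1$, $w(1)>w(2)>\cdots>w(\ell)$, $w(n)=m_2$, and with $k$-descent-avoiding restriction to positions $\ell+1,\ldots,n$. The $\ell-1$ values in positions $2,\ldots,\ell$ come from $[m_1-1]$ and are forced into decreasing order, giving $\binom{m_1-1}{\ell-1}$ choices. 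Since $m_1<m_2$, all $\ell$ removed values lie strictly below $m_2$, so $m_2$ has rank $m_2-\ell$ in the remaining $n-\ell$ values; by reverse-complement (Remark~\ref{rmk:bij}), the number of $k$-descent-avoiding arrangements of the remaining set with $m_2$ at the last position is $f_k((n-\ell)+1-(m_2-\ell),n-\ell)=f_k(n+1-m_2,n-\ell)$.

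The very same per-permutation cancellation as in the proof of Proposition~\ref{prop:fmn}---each permutation with initial decreasing run of length $t\ge k$ and no other $k$-descents is counted in exactly the two values of $\ell\in\{t-k+1,\ldots,t\}$ congruent to $0$ or $1\pmod{k}$, which have opposite signs---then yields the exact identity
\[
f_k(m_1,m_2,n)=\sum_{\substack{1\le\ell\le m_1\\\ell\equiv 1\pmod{k}}}\binom{m_1-1}{\ell-1}f_k(n+1-m_2,n-\ell)-\sum_{\substack{k\le\ell\le m_1\\\ell\equiv 0\pmod{k}}}\binom{m_1-1}{\ell-1}f_k(n+1-m_2,n-\ell).
\]
The two stated inequalities then follow by termwise monotonicity relaxations. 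Proposition~\ref{prop:decr} gives that $f_k(\cdot,n-\ell)$ is non-increasing, and since $\ell\le m_1<m_2$ we have $n+1-m_2\le n-\ell$, so $f_k(\max(1,n-\ell+1-m_2),n-\ell)\ge f_k(n+1-m_2,n-\ell)=f_k(\min(n-\ell,n+1-m_2),n-\ell)$, with all arguments clipped into $[1,n-\ell]$. Combined with Pascal's bound $\binom{m_1-2}{\ell-1}\le\binom{m_1-1}{\ell-1}$, inflating the positive contributions in the exact identity (larger binomial, $\max$-clipped first argument) and deflating the negatives (smaller binomial, $\min$-clipped first argument) yields the upper bound; reversing these choices yields the lower bound.

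The main obstacle is verifying that the per-permutation cancellation of Proposition~\ref{prop:fmn} truly survives the added constraint $w(n)=m_2$. Because $m_1<m_2$ forces the initial decreasing block to stop strictly before position $n$ (its length $t$ satisfies $t\le m_1<n$), imposing $w(n)=m_2$ only restricts the tail part of the permutation and is orthogonal to the initial-segment structure used in the cancellation, so the argument applies permutation by permutation without modification. A secondary subtlety is that the rank-based count factorizes uniformly as $\binom{m_1-1}{\ell-1}\cdot f_k(n+1-m_2,n-\ell)$, independent of \emph{which} $\ell-1$ values below $m_1$ were chosen for the initial run---which is precisely where the hypothesis $m_1<m_2$ is used, ensuring every chosen initial value lies below $m_2$ and hence that the rank of $m_2$ in the remaining set is always $m_2-\ell$.
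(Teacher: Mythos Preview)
Your argument is correct (for the main case $m_1<m_2$) and is in fact sharper than the paper's. The paper introduces $h_k(m_1,m_2,\ell,n)$, writes the exact identity $f_k(m_1,m_2,n)=\sum_{\ell\equiv 1}h_k-\sum_{\ell\equiv 0}h_k$, and then sandwiches each $h_k$ between $\binom{m_1-2}{\ell-1}f_k(\min(\cdot),n-\ell)$ and $\binom{m_1-1}{\ell-1}f_k(\max(\cdot),n-\ell)$, the point being that the rank $p$ of $m_2$ among the remaining $n-\ell$ values may depend on which $\ell-1$ values below $m_1$ were chosen, and that $m_2$ might itself need to be excluded from those choices. You observe that under the stated hypothesis $m_1<m_2$, neither complication arises: $m_2\notin[m_1-1]$, so there are exactly $\binom{m_1-1}{\ell-1}$ choices, and every removed value lies below $m_2$, so $p=m_2-\ell$ always. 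This yields an \emph{exact} closed form for $f_k(m_1,m_2,n)$ in terms of the $f_k(\cdot,n-\ell)$, from which the stated inequalities follow by the monotonicity relaxations you describe. The paper's version is written to survive without using $m_1\le m_2$ (its bounds on $p$ and on the binomial hold regardless of the relative order of $m_1,m_2$), which is why it does not collapse to your identity; but since the proposition is only stated for $m_1\le m_2$, your route is both valid and more informative. One small caveat: your dismissal of $m_1=m_2$ as ``handled separately'' is a bit quick, since the stated lower bound need not be $\le 0$ in tiny cases (e.g.\ $m_1=m_2=1$, $n=2$); this edge case is harmless for the downstream application (Theorem~\ref{thm:joint} assumes $m_1\neq m_2$), but strictly speaking it deserves a sentence.
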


\begin{proof}
We let $h_k(m_1,m_2,\ell,n)$ be the number of permutations $w\in S_n$ such that $w(1)=m_1$, $w(n)=m_2$, $w(1)>w(2)>\cdots>w(\ell)$, and the restriction of $w$ to the last $n-\ell$ indices contains no $k$-descents. We use the convention that for $\ell>n$, $h_k(m_1,m_2,\ell,n)=0$. Then by an argument essentially identical to that in the proof of Proposition~\ref{prop:fmn},
\[f_k(m_1,m_2,n)=\sum_{\substack{\ell\\1\leq \ell \leq m_1\\ \ell\equiv 1 \pmod{k}}} h_k\left(m_1,m_2,\ell,n\right)-\sum_{\substack{\ell\\k\leq \ell \leq m_1\\ \ell\equiv 0 \pmod{k}}} h_k\left(m_1,m_2,\ell,n\right)\]

We proceed to bound the terms. We claim that
\[\binom{m_1-2}{\ell-1}f_k\left(\min(n-\ell,n+1-m_2),n-\ell\right)\leq h_k\left(m_1,m_2,\ell,n\right)\]\[\leq \binom{m_1-1}{\ell-1}f_k\left(\max(1,n-\ell+1-m_2),n-\ell\right).\]
We first prove the lower bound. There are at least $\binom{m_1-2}{\ell-1}$ options for the initial $\ell$-element decreasing sequence (we let this be any choice of $\ell-1$ positive integers strictly less than $m_1$ and not equal to $m_2$), after which there are $f_k\left(n-\ell+1-p,n-\ell\right)$ options for the $k$-descent-avoiding restriction to the last $n-\ell$ elements, where $p$ is the value of the last element $m_2$ in the relative ordering of the last $n-\ell$ elements ($p$ depends on the choice of an $\ell$-element decreasing sequence). We have that $\max(1,m_2-\ell)\leq p\leq \min(n-\ell,m_2)$. Using this and the fact that $f_k(m,n)$ is non-increasing in $m$ (for constant $n$), we get 
\[f_k(\min(n-\ell,n+1-m_2),n-\ell)\leq f_k\left(n-\ell+1-p,n-\ell\right)\leq f_k\left(\max(1,n-\ell+1-m_2),n-k\right).\]
Multiplying the lower bound on the number of $\ell$-element initial sequences with the lower bound on the number of options for the relative ordering of the last $n-\ell$ elements gives the desired lower bound on $h_k\left(m_1,m_2,\ell,n\right)$. The proof of the upper bound is similar, except now there are at most $\binom{m_1-1}{\ell-1}$ options for the initial decreasing sequence, and each leaves at most $f_k\left(\max(1,n-\ell+1-m_2),n-k\right)$ options for the relative ordering of the last $n-\ell$ elements.

The statement of the proposition follows from plugging in our bounds on $h_k(m_1,m_2,\ell,n)$ into our expression for $f_k(m_1,m_2,n)$.
\end{proof}


\subsection{The asymptotic distribution of $f_k(m_1,m_2,n)$} We move on to stating the main asymptotic independence result.

\begin{theorem}\label{thm:joint}
Fix $k\geq 3$. For $n,m_1,m_2\in \mathbb{Z}^+$ with $1\leq m_1, m_2\leq n$ and $m_1\neq m_2$,
\[\frac{n^2 f_k(m_1,m_2,n)}{f_k(n)}=\varphi_k\left(\frac{m_1}{n}\right)\varphi_k\left(1-\frac{m_2}{n}\right)\left(1+O_k\left(n^{-0.49}\right)\right).\]
\end{theorem}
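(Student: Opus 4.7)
The plan is to follow the same pattern as the proof of Theorem~\ref{thm:fmnasy}, but with Proposition~\ref{prop:fm1m2} playing the role of Proposition~\ref{prop:fmn} and with Theorem~\ref{thm:fmnasy} itself replacing the use of Theorem~\ref{thm:nasy} on the factors $f_k(m',n-\ell)$. Both the upper and lower bounds in Proposition~\ref{prop:fm1m2} are alternating sums over $\ell\equiv 1\pmod k$ (with plus signs) and $\ell\equiv 0\pmod k$ (with minus signs) of terms shaped like $\binom{m_1-c}{\ell-1}f_k(m',n-\ell)$ with $c\in\{1,2\}$ and $m'\in\{\max(1,n-\ell+1-m_2),\min(n-\ell,n+1-m_2)\}$. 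I will show that both bounds evaluate to the same asymptotic, which factors as $\varphi_k(m_1/n)\,\varphi_k(1-m_2/n)\cdot f_k(n)/n^2$ up to a multiplicative $1+O_k(n^{-0.49})$.

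For a typical term with $\ell\le\log n$ in the bulk regime $m_1,\,n+1-m_2\ge\sqrt n$, I would substitute $f_k(m',n-\ell)=\tfrac{f_k(n-\ell)}{n-\ell}\varphi_k(m'/(n-\ell))(1+O(n^{-0.49}))$ from Theorem~\ref{thm:fmnasy}, then $f_k(n-\ell)=\tfrac{(n-\ell)!}{n!}\,r_k^{-\ell}f_k(n)(1+O(\gamma_k^{n-\ell}))$ from Theorem~\ref{thm:nasy}, and the standard estimates $\binom{m_1-c}{\ell-1}=\tfrac{m_1^{\ell-1}}{(\ell-1)!}(1+O(\ell^2/m_1))$ and $(n-\ell)!/n!=n^{-\ell}(1+O(\ell^2/n))$. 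Since $m'/(n-\ell)=(1-m_2/n)(1+O(\ell/n))$ and $\varphi_k$ is Lipschitz on $[0,1]$, each term collapses to
\[\frac{f_k(n)}{n^2}\,\varphi_k\!\left(1-\tfrac{m_2}{n}\right)\cdot\frac{\bigl(m_1/(nr_k)\bigr)^{\ell-1}}{(\ell-1)!}\cdot\frac{1}{r_k}\bigl(1+O(n^{-0.49})\bigr).\]
The alternating sum over $\ell=1,k+1,2k+1,\ldots$ (positive) and $\ell=k,2k,3k,\ldots$ (negative) then matches term-by-term the series defining $\varphi_k$, so the inner sum is exactly $r_k\,\varphi_k(m_1/n)$, giving the claimed product.

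Three auxiliary tasks remain. First, the tail $\ell>\log n$ is bounded crudely by $(e/(r_k\ell))^\ell$ per term, summing to $O(n^{-0.49})$ exactly as in the proof of Theorem~\ref{thm:fmnasy}. Second, the gap between the two bounds in Proposition~\ref{prop:fm1m2} is harmless: $\binom{m_1-1}{\ell-1}-\binom{m_1-2}{\ell-1}=\binom{m_1-2}{\ell-2}$ is a factor $O(\ell/m_1)$ smaller, and the two choices for $m'$ differ by at most $\ell$, producing a $1+O(\ell/n)$ relative correction through Theorem~\ref{thm:fmnasy} and the Lipschitzness of $\varphi_k$. Third, the boundary cases $m_1<\sqrt n$ or $n+1-m_2<\sqrt n$ are handled by sandwiching $f_k(m_1,m_2,n)$ using a joint analog of Proposition~\ref{prop:decr} (swapping $m_1\leftrightarrow m_1-1$ or $m_2\leftrightarrow m_2+1$ gives an injection that preserves $k$-descent avoidance), combined with continuity of $\varphi_k$, exactly as in the endgame of Theorem~\ref{thm:fmnasy}. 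The main obstacle I expect is the uniform bookkeeping of error terms when $m_1$ and $m_2$ are simultaneously close to the boundary, together with reducing the asymmetric case $m_1>m_2$ not directly covered by Proposition~\ref{prop:fm1m2} to the symmetric case; the latter should follow from the $w\mapsto\mathrm{rc}(w)$ bijection (which swaps first and last element and preserves $k$-descent avoidance), under which the claimed formula $\varphi_k(m_1/n)\,\varphi_k(1-m_2/n)$ is invariant.
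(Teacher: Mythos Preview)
Your proposal is correct and follows essentially the same route as the paper: apply Theorem~\ref{thm:fmnasy} to the $f_k(m',n-\ell)$ factors in Proposition~\ref{prop:fm1m2}, then Theorem~\ref{thm:nasy} to $f_k(n-\ell)/f_k(n)$, reduce each term to $\tfrac{f_k(n)}{n^2}\varphi_k(1-m_2/n)\cdot\tfrac{(m_1/(nr_k))^{\ell-1}}{(\ell-1)!r_k}(1+O(n^{-0.49}))$, and recognize the alternating sum as the series for $\varphi_k(m_1/n)$.

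One small divergence: for the boundary case $m_1<\sqrt n$ the paper does \emph{not} invoke a joint monotonicity analog of Proposition~\ref{prop:decr}; instead it argues directly that every term past $\ell=1$ is $O(n^{-k/2})$ (since $(m_1-i)/(n-i)\le n^{-1/2}$), so the $\ell=1$ term $\tfrac{1}{r_k}\varphi_k(1-m_2/n)$ already gives the answer. Your monotonicity route would work too, but the swap $m_1\leftrightarrow m_1-1$ fails to preserve the last element when $m_2=m_1-1$, so you would need a small workaround there; the paper's direct argument sidesteps this. Also, no separate boundary analysis for $m_2$ is needed in either approach, since the Lipschitz estimate $\varphi_k(m'/(n-\ell))=\varphi_k(1-m_2/n)(1+O(\ell/n))$ is uniform in $m_2$. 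Your reverse-complement reduction for the case $m_1>m_2$ is correct and is indeed how one closes the gap left by the hypothesis $m_1\le m_2$ in Proposition~\ref{prop:fm1m2}.
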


The reason we are calling this an asymptotic independence theorem is that it follows from this and Theorem~\ref{thm:fmnasy} that for $m_1\neq m_2$ and $w\in \mathcal{D}_k(\emptyset,n)$ chosen uniformly at random,
\[\mathbb{P}(w(1)=m_1,w(n)=m_2)\sim \mathbb{P}\left(w(1)=m_1\right)\cdot\mathbb{P}\left(w(n)=m_2\right).\]

The proof of Theorem~\ref{thm:joint} closely resembles the proof of Theorem~\ref{thm:fmnasy}, with Proposition~\ref{prop:fm1m2} in place of Proposition~\ref{prop:fmn} and Theorem~\ref{thm:fmnasy} complementing Theorem~\ref{thm:nasy}.

\begin{proof}[Proof of Theorem~\ref{thm:joint}] It suffices to show that $\frac{n^2 f_k(m_1,m_2,n)}{f_k(n)}$ is both lower and upper bounded by $\varphi_k\left(\frac{m_1}{n}\right)\varphi_k\left(1-\frac{m_2}{n}\right)\left(1+O_k\left(n^{-0.49}\right)\right)$. We start with the lower bound given by Proposition~\ref{prop:fm1m2},

\[\frac{n^2f_k(m_1,m_2,n)}{f_k(n)}\geq \]

\begin{align*}
    &\frac{n^2}{f_k(n)}\sum_{\substack{\ell\\1\leq \ell \leq m_1\\ \ell\equiv 1 \pmod{k}}}\binom{m_1-2}{\ell-1}f_k\left(\min(n-\ell,n+1-m_2),n-\ell\right)\\
    &-\frac{n^2}{f_k(n)}\sum_{\substack{\ell\\k\leq \ell \leq m_1\\ \ell\equiv 0 \pmod{k}}}   \binom{m_1-1}{\ell-1}f_k\left(\max(1,n-\ell+1-m_2),n-\ell\right)=(*)
\end{align*}

Using Theorem~\ref{thm:fmnasy}, we get

\begin{align*}
    (*)=&\frac{n^2}{f_k(n)}\sum_{\substack{\ell\\1\leq \ell \leq m_1\\ \ell\equiv 1 \pmod{k}}}\binom{m_1-2}{\ell-1}\varphi_k\left(\frac{\min(n-\ell,n+1-m_2)}{n-\ell}\right)\frac{f_k(n-\ell)}{n-\ell}\left(1+O\left(n^{-0.49}\right)\right)\\ &-\frac{n^2}{f_k(n)}\sum_{\substack{\ell\\k\leq \ell \leq m_1\\ \ell\equiv 0 \pmod{k}}}   \binom{m_1-1}{\ell-1}\varphi_k\left(\frac{\max(1,n-\ell+1-m_2)}{n-\ell}\right)\frac{f_k(n-\ell)}{n-\ell}\left(1+O\left(n^{-0.49}\right)\right).
\end{align*}

Using Theorem~\ref{thm:nasy}, we then get
\[=\sum_{\substack{\ell\\1\leq \ell \leq m_1\\ \ell\equiv 1 \pmod{k}}}\binom{m_1-2}{\ell-1}\varphi_k\left(\frac{\min(n-\ell,n+1-m_2)}{n-\ell}\right)\frac{n}{(n-1)(n-2)\cdots (n-\ell)r_k^{\ell}}\left(1+O\left(n^{-0.49}\right)\right)\]\[ - \sum_{\substack{\ell\\k\leq \ell \leq m_1\\ \ell\equiv 0 \pmod{k}}}   \binom{m_1-1}{\ell-1}\varphi_k\left(\frac{\max(1,n-\ell+1-m_2)}{n-\ell}\right)\frac{n}{(n-1)(n-2)\cdots (n-\ell)r_k^{\ell}}\left(1+O\left(n^{-0.49}\right)\right).\]
Now, by an argument like in the proof of Theorem~\ref{thm:fmnasy}, the terms with $\ell\geq \log n$ only contribute $O(1/n)$. We first consider the case $m_1\geq \sqrt{n}$, and focus on terms with $\ell\leq \log n$. We repeat some estimates from the proof of Theorem~\ref{thm:fmnasy}:
\[\binom{m_1-1}{\ell-1}=\frac{m_1^{\ell-1}}{(\ell-1)!}\left(1+O\left(\frac{\ell^2}{m_1}\right)\right),\]
\[\binom{m_1-2}{\ell-1}=\frac{m_1^{\ell-1}}{(\ell-1)!}\left(1+O\left(\frac{\ell^2}{m_1}\right)\right),\text{ and}\]
\[\frac{n}{(n-1)(n-2)\cdots(n-\ell)}=n^{-\ell+1}\left(1+O\left(\frac{\ell^2}{n}\right)\right).\]
Plugging these into the current expression for our lower bound, we get
\begin{align*}
    (*)=&O(1/n)+\frac{1}{r_k}\sum_{\substack{\ell\\1\leq \ell \leq \log n\\ \ell\equiv 1 \pmod{k}}}\varphi_k\left(\frac{\min(n-\ell,n+1-m_2)}{n-\ell}\right)\frac{\left(\frac{m_1}{n}\frac{1}{r_k}\right)^{\ell-1}}{(\ell-1)!}\left(1+O\left(n^{-0.49}\right)\right)\\ &- \frac{1}{r_k}\sum_{\substack{\ell\\k\leq \ell \leq \log n\\ \ell\equiv 0 \pmod{k}}}  \varphi_k\left(\frac{\max(1,n-\ell+1-m_2)}{n-\ell}\right)\frac{\left(\frac{m_1}{n}\frac{1}{r_k}\right)^{\ell-1}}{(\ell-1)!}\left(1+O\left(n^{-0.49}\right)\right).
\end{align*}


In $[0,1]$, $\varphi_k$ is bounded below by a constant greater than $0$, and the derivative of $\varphi_k$ is bounded as well, so $\varphi_k\left(\frac{\min(n-\ell,n+1-m_2)}{n-\ell}\right)=\varphi_k\left(1-\frac{m_2}{n}\right)\left(1+O\left(\frac{\ell}{n}\right)\right)$. We plug this in as well, getting
\[(*)=O(1/n)+\frac{1}{r_k}\sum_{\substack{\ell\\1\leq \ell \leq \log n\\ \ell\equiv 1 \pmod{k}}}\varphi_k\left(1-\frac{m_2}{n}\right)\frac{\left(\frac{m_1}{n}\frac{1}{r_k}\right)^{\ell-1}}{(\ell-1)!}\left(1+O\left(n^{-0.49}\right)\right)\]\[ - \frac{1}{r_k}\sum_{\substack{\ell\\k\leq \ell \leq \log n\\ \ell\equiv 0 \pmod{k}}}  \varphi_k\left(1-\frac{m_2}{n}\right)\frac{\left(\frac{m_1}{n}\frac{1}{r_k}\right)^{\ell-1}}{(\ell-1)!}\left(1+O\left(n^{-0.49}\right)\right).\]

This can now be approximated by a corresponding infinite sum, just like in the proof of Theorem~\ref{thm:fmnasy}. Skipping the identical steps, we get that the expression for our lower bound becomes
\[(*)=O\left(n^{-0.49}\right)+\varphi_k\left(\frac{m_1}{n}\right)\varphi_k\left(1-\frac{m_2}{n}\right).\]
Now, we can get this to our desired form by noting that $\varphi_k$ is lower bounded by a positive constant,
\[(*)=\varphi_k\left(\frac{m_1}{n}\right)\varphi_k\left(1-\frac{m_2}{n}\right)\left(1+O\left(n^{-0.49}\right)\right).\]
So far, we are only done proving the desired lower bound for $m_1\geq \sqrt{n}$. We now prove the desired lower bound for $m_1\leq \sqrt{n}$. We come back to the lower bound
\[\sum_{\substack{\ell\\1\leq \ell \leq m_1\\ \ell\equiv 1 \pmod{k}}}\binom{m_1-2}{\ell-1}\varphi_k\left(\frac{\min(n-\ell,n+1-m_2)}{n-\ell}\right)\frac{n}{(n-1)(n-2)\cdots (n-\ell)r_k^{\ell}}\left(1+O\left(n^{-0.49}\right)\right)\]\[ - \sum_{\substack{\ell\\k\leq \ell \leq m_1\\ \ell\equiv 0 \pmod{k}}}   \binom{m_1-1}{\ell-1}\varphi_k\left(\frac{\max(1,n-\ell+1-m_2)}{n-\ell}\right)\frac{n}{(n-1)(n-2)\cdots (n-\ell)r_k^{\ell}}\left(1+O\left(n^{-0.49}\right)\right).\]
Consider just the first sum; it can be rewritten as
\[\sum_{\substack{\ell\\1\leq \ell \leq m_1\\ \ell\equiv 1 \pmod{k}}}\frac{1}{(\ell-1)!}\varphi_k\left(\frac{\min(n-\ell,n+1-m_2)}{n-\ell}\right)\frac{(m_1-2)(m_1-3)\ldots(m_1-\ell)}{(n-2)\cdots (n-\ell)r_k^{\ell}}\left(1+O\left(n^{-0.49}\right)\right).\]
Since $\frac{m_1-i}{n-i}\leq \frac{m_1}{n}\leq n^{-0.5}$, all terms after the first one can be upper bounded by a geometric series with first term $O(n^{-k/2})$ and ratio $O(n^{-k/2})$. The entire second sum can be upper bounded by a similar geometric series. All in all, this gives that the contribution of all terms other than the $\ell=1$ term is $O(1/n)$. Skipping a few steps again, we arrive at a lower bound of
\[\frac{1}{r_k}\varphi_k\left(1-\frac{m_2}{n}\right)+O\left(n^{-0.49}\right).\]
Since $\varphi_k$ has bounded derivative and $\varphi_k(0)=\frac{1}{r_k}$, this is
\[\varphi_k\left(\frac{m_1}{n}\right)\varphi_k\left(1-\frac{m_2}{n}\right)+O\left(n^{-0.49}\right)+O\left(n^{-0.5}\right).\]
As before, the desired lower bound follows using the fact that there is a positive lower bound on $\varphi_k$:
\[\varphi_k\left(\frac{m_1}{n}\right)\varphi_k\left(1-\frac{m_2}{n}\right)+O\left(n^{-0.49}\right)=\varphi_k\left(\frac{m_1}{n}\right)\varphi_k\left(1-\frac{m_2}{n}\right)\left(1+O\left(n^{-0.49}\right)\right).\]

The argument for the upper bound is completely analogous, and will be skipped. This completes the proof.
\end{proof}


\section{Asymptotic equidistribution}\label{sec:equi}
\subsection{Concentration for discrete order statistics: a tale of more than one Chebyshev}
    In this subsection, we prove a concentration result for certain discrete order statistics. Recall that according to Lemma~\ref{lem:os}, if we are choosing a subset of $t$ points from $[n]$, and $t$ stays constant while $n\to \infty$, then the $s$th largest point behaves just like a uniform $[0,1]$ $s$th order statistic. The main takeaway from the next lemma is that if $n\to \infty$ and $t(n)\to \infty$ as well, then the $s$th order statistic is quite close to being constant (that is, it is concentrated).

\begin{proposition}\label{prop:conc}
Let $s,t,n\in \mathbb{Z}^+$ with $1\leq s\leq t\leq n$. Let $\mathcal{Y}$ be a $t$-element subset of $[n]$ chosen uniformly from all $t$-element subsets of $[n]$. Let $Y_s$ be the $s$th largest element of $\mathcal{Y}$. Then we have the following:
\begin{enumerate}[label=(\arabic*)]
    \item $\mathbb{E}[Y_s]=\frac{s}{t+1}(n+1)$.
    \item $\Var(Y_s)\leq \frac{n^2}{t}$.
    \item $\mathbb{P}(|Y_s-\frac{s}{t+1}(n+1)|\leq \frac{n}{t^{1/3}})\geq 1-t^{-1/3}$.
\end{enumerate}
\end{proposition}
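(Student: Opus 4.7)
The plan is to compute the first two moments of $Y_s$ in closed form from its hypergeometric point-mass function, and then deduce (3) as a direct application of Chebyshev's inequality. A short combinatorial count (choose $s-1$ elements of $\mathcal{Y}$ from $\{1,\ldots,\ell-1\}$ and $t-s$ from $\{\ell+1,\ldots,n\}$) gives
\[\mathbb{P}(Y_s=\ell)=\frac{\binom{\ell-1}{s-1}\binom{n-\ell}{t-s}}{\binom{n}{t}}.\]

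For (1), I would combine the rising-factorial identity $\ell\binom{\ell-1}{s-1}=s\binom{\ell}{s}$ with the Chu--Vandermonde summation $\sum_\ell \binom{\ell}{s}\binom{n-\ell}{t-s}=\binom{n+1}{t+1}$ to get $\mathbb{E}[Y_s]=s\binom{n+1}{t+1}/\binom{n}{t}=s(n+1)/(t+1)$ in essentially one line.

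For (2), I would mirror the same step using $\ell(\ell+1)\binom{\ell-1}{s-1}=s(s+1)\binom{\ell+1}{s+1}$ and $\sum_\ell \binom{\ell+1}{s+1}\binom{n-\ell}{t-s}=\binom{n+2}{t+2}$ to obtain
\[\mathbb{E}[Y_s(Y_s+1)]=\frac{s(s+1)(n+1)(n+2)}{(t+1)(t+2)}.\]
Subtracting $\mathbb{E}[Y_s]$ gives $\mathbb{E}[Y_s^2]$, and writing $\Var(Y_s)=\mathbb{E}[Y_s^2]-\mathbb{E}[Y_s]^2$ over the common denominator $(t+1)^2(t+2)$ produces a polynomial in $s,t,n$ that has to be factored. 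The only genuinely fiddly step is this factorization; the useful hint is that $\Var(Y_s)$ must vanish when $n=t$ (since then $\mathcal{Y}=[n]$ deterministically), which pulls out a factor of $(n-t)$ and leads cleanly to
\[\Var(Y_s)=\frac{s(t+1-s)(n+1)(n-t)}{(t+1)^2(t+2)}.\]
The elementary bounds $s(t+1-s)\leq (t+1)^2/4$ and $(n+1)(n-t)\leq n^2$ (valid for $t\geq 1$) then collapse this to $\Var(Y_s)\leq n^2/(4(t+2))\leq n^2/t$.

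Part (3) is then immediate from Chebyshev's inequality:
\[\mathbb{P}\!\left(\left|Y_s-\tfrac{s(n+1)}{t+1}\right|>\tfrac{n}{t^{1/3}}\right)\leq \frac{\Var(Y_s)}{(n/t^{1/3})^2}\leq \frac{n^2/t}{n^2 t^{-2/3}}=t^{-1/3}.\]
The main obstacle in the whole argument is essentially just the polynomial factorization in the variance step. A slightly cleaner alternative would be the gap decomposition $Y_s=G_0+\cdots+G_{s-1}+s$, where $(G_0,\ldots,G_t)$ is uniform on non-negative integer compositions of $n-t$ into $t+1$ parts; exchangeability together with $\Var(\sum_i G_i)=0$ forces $\Cov(G_0,G_1)=-\Var(G_0)/t$ and reduces the whole variance to a clean scalar multiple of $\Var(G_0)$, sidestepping the polynomial algebra.
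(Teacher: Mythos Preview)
Your argument is correct. Interestingly, the paper's primary route is essentially your ``cleaner alternative'': it writes $Y_s=D_1+\cdots+D_s$ with $D_i$ the (positive) gaps summing to $n+1$, uses exchangeability for part~(1), and for part~(2) computes $\Var(D_1)$ by a binomial-sum manipulation (writing $i^2$ as a combination of rising factorials and applying the hockey-stick identity) and then shows $\Cov(D_1,D_2)\le 0$ --- giving two proofs, one via a ``partial Chebyshev sum inequality'' lemma and one via the $\Var(\sum_i D_i)=0$ trick you mention. Your direct approach instead hits $\mathbb{E}[Y_s]$ and $\mathbb{E}[Y_s(Y_s+1)]$ in one shot from the point-mass function using the rising-factorial absorption identities and Chu--Vandermonde, which is shorter and lands immediately on the exact variance $\frac{s(t+1-s)(n+1)(n-t)}{(t+1)^2(t+2)}$ (the paper records this formula only as a remark after the proof). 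The paper's decomposition is more conceptual and makes the role of negative correlation explicit; your computation is more economical and yields the sharper intermediate bound $\Var(Y_s)\le n^2/(4(t+2))$ along the way. Part~(3) is Chebyshev in both.
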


Our proof of the first part of Proposition~\ref{prop:conc} is inspired by the following argument which I learned in MIT's Fall 2017 Putnam seminar (18.A34), taught by Yufei Zhao. Let $Z_1, Z_2,\ldots, Z_t$ be independent random variables, each chosen uniformly from the (continuous) interval $[0,1]$. Defining $Z=\min(Z_1,\ldots, Z_t)$, our goal is to show that $\mathbb{E}[Z]=\frac{1}{t+1}$. We let $Z'_0,Z'_1,\ldots, Z'_t$ be independent random variables, each chosen uniformly on a circle of length $1$. Note that if we cut the circle at $Z_0'$, i.e. making it a segment $[0,1]$ starting at $Z_0'$ and oriented according to some chosen orientation on the circle, the (joint) distribution of $Z_1', \ldots, Z_t'$ on this segment is the same as the distribution of $Z_1,Z_2,\ldots,Z_t$. Under this identification, the distance between $Z_0'$ and the next $Z_i$ (in the direction given by this orientation on the circle) is equal to $Z$. For $0\leq i\leq t$, we define the random variable $D_i$ to be the distance between $Z_i'$ and the next $Z_j'$ (according to the same chosen orientation on the circle). Note that since the setup is symmetric under relabeling variables, $D_i$ and $D_j$ are identically distributed, so in particular, $\mathbb{E}[D_i]=\mathbb{E}[D_j]$. Going around the circle from some point $Z_i$, we note that the segment from each $Z_j$ to the next point is traversed exactly once, so the total distance is $1=D_0+D_1+\cdots+D_t$. Hence, $1=\mathbb{E}[D_0+\cdots+D_t]$. Together with linearity of expectation and our previous observation that expectations of any $D_i$ and $D_j$ are equal, this implies that $1=(t+1)\mathbb{E}[D_0]$, giving that $\mathbb{E}[D_0]=\frac{1}{t+1}$. One can extend this to the expectation of the $s$th largest $Z_i$ being $\frac{s}{t+1}$ by replacing the $D_i$ in this argument with the distance between the $\ell$th and $(\ell+1)$th $Z_j$ coming after $Z_i$ on the circle, and summing over $\ell=0,\ldots,s-1$ at the end.

In our case, we can start with $n+1$ points on a circle, choose a subset of $t+1$ of these, and argue exactly as in the continuous case, getting that the expected size of any gap is $\frac{n+1}{t+1}$. For the sake of variety, we give a different short argument along similar lines.

\begin{proof}[Proof of Proposition~\ref{prop:conc} (1)] With $\mathcal{Y}$ as in the proposition statement, we let $Y_1, \ldots, Y_t$ be the elements of $\mathcal{Y}$ in order, and we define the random variables $D_1=Y_1, D_2=Y_2-Y_1, \ldots, D_{t}=Y_t-Y_{t-1}, D_{t+1}=(n+1)-Y_t$. Note that for any $D_1,\ldots, D_{t+1}$ with all $D_i\geq 1$ and $D_1+\cdots+D_{t+1}=n+1$, there is a unique set $\mathcal{Y}$, namely the one given by $Y_1=D_1, Y_2=D_1+D_2,\ldots, Y_t=D_1+\cdots+D_t$. So we get the same distribution for $\mathcal{Y}$ if we choose it by picking a uniformly random sequence of $(t+1)$ positive integers summing to $n+1$, $D_1,\ldots,D_{t+1}$, and then taking the corresponding $\mathcal{Y}$. But in this latter formulation in terms of $D_1,\ldots, D_{t+1}$, it is clear that any $D_i$ and $D_j$ are identically distributed. So $n+1=\mathbb{E}[D_1+\cdots+D_{t+1}]=(t+1)\mathbb{E}[D_i]$, from where $\mathbb{E}[D_i]=\frac{n+1}{t+1}$. Hence, $\mathbb{E}[Y_s]=\mathbb{E}[D_1+\cdots+D_s]=s\frac{n+1}{t+1}$, which is what we wanted to show.
\end{proof}

For the second part, we use a method for finding certain binomial coefficient sums, along with some other tricks. Here is one of the other tricks, which will be useful for proving negative correlation of $D_i,D_j$.

\begin{lemma}[A partial Chebyshev's sum inequality]\label{lem:cheb}
Suppose we have a weakly increasing sequence $a_1\leq \cdots \leq  a_n$ and two weakly decreasing sequences $b_1 \geq \cdots \geq b_n\geq 0$ and $c_1\geq \cdots\geq c_n$. Then \[\left(\sum_i a_i b_i c_i\right)\left(\sum_i b_i\right)\leq \left(\sum_i a_i b_i\right)\left(\sum_i b_i c_i \right).\]
Or equivalently,
\[\sum_i a_i b_i c_i \leq \sum_i a_i b_i \frac{\sum_i b_i c_i}{\sum_i b_i}.\]
\end{lemma}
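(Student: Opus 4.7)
\bigskip

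\noindent\textbf{Proof plan.} The statement is exactly the weighted Chebyshev sum inequality with nonnegative weights $w_i := b_i$ (here using that $b_i \geq 0$, which is the only place positivity is needed; monotonicity of $b$ is not actually used). So the plan is to reduce to the standard pairwise-swap proof of Chebyshev's sum inequality.

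\smallskip

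\noindent The key observation is that for any two indices $i,j \in \{1,\ldots,n\}$, the sequence $a$ being weakly increasing and $c$ being weakly decreasing implies that $(a_i - a_j)$ and $(c_i - c_j)$ have opposite (weak) signs, so
\[
(a_i - a_j)(c_i - c_j) \leq 0.
\]
Multiplying by the nonnegative quantity $b_i b_j \geq 0$ and summing over all ordered pairs $(i,j)$ yields
\[
\sum_{i,j} b_i b_j (a_i - a_j)(c_i - c_j) \leq 0.
\]

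\smallskip

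\noindent The next step is to expand the left-hand side. Distributing gives four terms:
\[
\sum_{i,j} b_i b_j a_i c_i \;-\; \sum_{i,j} b_i b_j a_i c_j \;-\; \sum_{i,j} b_i b_j a_j c_i \;+\; \sum_{i,j} b_i b_j a_j c_j.
\]
The first and last sums each factor as $\bigl(\sum_i a_i b_i c_i\bigr)\bigl(\sum_j b_j\bigr)$ (by symmetry in $i \leftrightarrow j$ for the last), while the middle two each factor as $\bigl(\sum_i a_i b_i\bigr)\bigl(\sum_j b_j c_j\bigr)$. Thus the displayed inequality collapses to
\[
2\bigl(\tsum_i a_i b_i c_i\bigr)\bigl(\tsum_i b_i\bigr) - 2\bigl(\tsum_i a_i b_i\bigr)\bigl(\tsum_i b_i c_i\bigr) \leq 0,
\]
which is exactly the first displayed form of the conclusion. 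The equivalent second form follows upon dividing by the (nonnegative) quantity $\sum_i b_i$, provided it is nonzero; the degenerate case $\sum_i b_i = 0$ forces $b_i \equiv 0$ (as all $b_i \geq 0$), in which case both sides vanish and there is nothing to prove.

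\smallskip

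\noindent\textbf{Main obstacle.} There is essentially none — this is a one-line swap argument once one notices that $b$'s monotonicity is a red herring and only $b_i \geq 0$ matters. The only small care needed is to handle the division by $\sum_i b_i$ in the second (equivalent) form, which is handled by the degenerate-case remark above.
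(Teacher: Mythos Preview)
Your proof is correct and follows essentially the same approach as the paper: both argue from the pointwise inequality $b_i b_j (a_i - a_j)(c_i - c_j) \leq 0$, expand, and sum (the paper sums over pairs $i \leq j$, you over all ordered pairs, picking up a harmless factor of $2$). Your side remarks---that only $b_i \geq 0$ is used, not the monotonicity of $b$, and the handling of the degenerate case $\sum_i b_i = 0$ for the second form---are correct and go slightly beyond what the paper writes.
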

Let us first briefly explain why we are calling this a partial Chebyshev's sum inequality. For the sequences $a_1\leq \cdots \leq a_n$ and $b_1 c_1 \geq \cdots \geq b_n c_n$, Chebyshev's sum inequality gives the following:
\[\sum_i a_i b_i c_i \leq \sum_i a_i \frac{\sum_i b_i c_i}{n}.\]
In words, the weakly increasing sequence $a_i$ summed against the non-increasing sequence $b_i c_i$ is less or equal to $a_i$ summed against the average of the sequence $b_i c_i$. We can think of the second version of the inequality in Lemma~\ref{lem:cheb} as saying that $a_i$ summed against $b_i c_i$ is less or equal to $a_i$ summed against $b_i \frac{\sum_i b_i c_i}{\sum_i b_i}$, which is the unique sequence $b_i c$ with the same sum as $b_i c_i$. In other words, there is a sort of partial averaging (only $c_i$ is averaged out) of the sequence $b_i c_i$.

With this is mind, the most intuitive proof might be an inductive mass redistribution argument, but we instead give a short algebraic proof in the spirit of algebraic proofs of the rearrangement inequality and Chebyshev's sum inequality.

\begin{proof}[Proof of Lemma~\ref{lem:cheb}]
Note that for any $i,j\in [n]$, we have $0\leq b_i b_j (a_j-a_i)(c_i-c_j)$. Expanding out, we get
\[a_i b_i c_i b_j+a_j b_j c_j b_i\leq a_i b_i b_j c_j +a_j b_j b_i c_i.\]
Summing up over all pairs $1\leq i\leq j\leq n$, we get the desired inequality.
\end{proof}

As another remark, note that the special case $b_i=1$ is Chebyshev's sum inequality. We proceed to give the proof of part (2) of our concentration result.

\begin{proof}[Proof of Proposition~\ref{prop:conc} (2)]
\[\Var(Y_s)=\Var(D_1+\cdots+D_s)=\sum_{1\leq i\leq s}\Var(D_i)+2\sum_{i<j\leq s}\Cov(D_i,D_j).\]
Since the joint distribution of $D_i$ is symmetric in $i$, we can rewrite the above as
\[\Var(Y_s)=s\Var(D_1)+s(s-1)\Cov(D_1,D_2).\]

We first consider $\Var(D_1)=\mathbb{E}[D_1^2]-\mathbb{E}[D_1]^2.$ From the proof of part (1), we have that 
\[\mathbb{E}[D_1]^2=\left(\frac{n+1}{t+1}\right)^2.\]
As for $\mathbb{E}[D_1^2]$, we begin by writing down an explicit expression:
\[\mathbb{E}[D_1^2]=\sum_{i=1}^{n}i^2\mathbb{P}(D_1=i)=\sum_{i=1}^n i^2 \frac{\binom{n-i}{t-1}}{\binom{n}{t}}=\frac{1}{\binom{n}{t}}\sum_{i=1}^n i^2\binom{n-i}{t-1}.\]
We now write $i^2$ in terms of a suitable ``basis'' to replace the above sum with some simpler binomial coefficient sums. Namely, we use the ``basis'' of parameters that are not summed over, as well as terms of the form $(n-i+1)(n-i+2)\cdots (n-i+\ell)$, motivated by the fact that products of such terms and binomial coefficients are nice. We write $i^2=(n-i+1)(n-i+2)-(2n+3)(n-i+1)+(n+1)^2$. Plugging this into the sum, we get
\[\mathbb{E}[D_1^2]=\frac{1}{\binom{n}{t}}\left(t(t+1)\sum_{i=1}^n \binom{n-i+2}{t+1}-(2n+3)t\sum_{i=1}^n\binom{n-i+1}{t}+(n+1)^2\sum_{i=1}^n\binom{n-i}{t-1}\right).\]
We now use the hockey-stick identity $\sum_{i=r}^\ell \binom{i}{r}=\binom{\ell+1}{r+1}$ to find each sum.
\[\mathbb{E}[D_1^2]=\frac{1}{\binom{n}{t}}\left(t(t+1)\binom{n+2}{t+2}-(2n+3)t\binom{n+1}{t+1}+(n+1)^2\binom{n}{t}\right).\]
Expanding the binomial coefficients and canceling terms,
\[\mathbb{E}[D_1^2]=(n+2)(n+1)\frac{t}{t+2}-(2n+3)(n+1)\frac{t}{t+1}+(n+1)^2.\]
With some algebra, we get
\[\mathbb{E}[D_1^2]=\frac{(n+1)(2n-t+2)}{(t+1)(t+2)}.\]

From our expressions for $\mathbb{E}[D_1]^2$ and $\mathbb{E}[D_1^2]$, some more algebra gives 
\[\Var(D_1)=\frac{t(n+1)(n-t)}{(t+1)^2(t+2)}\leq \frac{n^2}{t^2}.\]

We will next show that $D_1,D_2$ are negatively correlated, i.e. that $\Cov(D_1,D_2)\leq 0$. One could do this using the method for finding binomial coefficient sums outlined above, but we instead opt to give two other proofs.

The first proof of $\Cov(D_1,D_2)\leq 0$ uses Lemma~\ref{lem:cheb}. As $\Cov(D_1,D_2)=\mathbb{E}[D_1 D_2]-\mathbb{E}[D_1]\mathbb{E}[D_2]$, it is equivalent to show that $\mathbb{E}[D_1 D_2]\leq \mathbb{E}[D_1]\mathbb{E}[D_2]$. We have
\[\mathbb{E}[D_1 D_2]=\sum_{i=1}^n i\cdot \mathbb{P}(D_1=i)\cdot \mathbb{E}[D_2|D_1=i]\]
and
\[\mathbb{E}[D_1]\mathbb{E}[D_2]=\sum_{i=1}^n i \cdot \mathbb{P}[D_1=i]\cdot \mathbb{E}[D_2].\]
We start by noting that for $i=1,\ldots,n$, the sequence $a_i:=i$ is weakly increasing, the sequence $b_i:=\mathbb{P}(D_1=i)=\binom{n-i}{t-1}$ is weakly decreasing, and the sequence $c_i=\mathbb{E}[D_2|D_1=i]=\frac{n-i+1}{t}$ is also weakly decreasing.

Next, note that

\[\sum_i \mathbb{P}(D_1=i)\cdot\mathbb{E}[D_2|D_1=i]=\mathbb{E}[\mathbb{E}[D_2|D_1]=\mathbb{E}[D_2]=\sum_{i}\mathbb{P}[D_1=i]\cdot\mathbb{E}[D_2]\]

so
\[\mathbb{E}[D_2]=\frac{\sum_i \mathbb{P}(D_1=i)\cdot\mathbb{E}[D_2|D_1=i]}{\sum_{i}\mathbb{P}(D_1=i)}.\]
Translating this into $a_i,b_i,c_i$, we get
\[\mathbb{E}[D_2]=\frac{\sum_i b_i c_i}{\sum_i b_i}.\]
We can now express both $\mathbb{E}[D_1 D_2]$ and $\mathbb{E}[D_1]\mathbb{E}[D_2]$ in terms of $a_i,b_i,c_i$. Namely, the inequality we wish to prove is
\[\sum_i a_i b_i c_i\leq \sum_i a_i b_i \frac{\sum_i b_i c_i}{\sum_i b_i}.\]
But this inequality is precisely given by Lemma~\ref{lem:cheb}. 

The second proof of $\Cov(D_1,D_2)\leq 0$ uses the following trick. Note that $D_1+\cdots+D_{t+1}=n+1$ is a constant, so
\[0=\Var(D_1+\ldots+D_{t+1})=(t+1)\Var(D_1)+t(t+1)\Cov(D_1,D_2),\]
from which
\[\Cov(D_1,D_2)=\frac{-\Var(D_1)}{t}\leq 0.\]

Having shown that $\Cov(D_1,D_2)\leq 0$, we now finally come back to 
\[\Var(Y_s)=s\Var(D_1)+s(s-1)\Cov(D_1,D_2)\leq s\Var(D_1)\leq t\Var(D_1)\leq t\frac{n^2}{t^2}=\frac{n^2}{t}.\]
This is what we wanted to show.
\end{proof}


As a side remark, we note that using the second approach, one can also write down a precise equation for the covariance of $D_1$ and $D_2$, and hence the variance of $Y_s$:
\[\Cov(D_1,D_2)=\frac{-\Var(D_1)}{t}=-\frac{(n+1)(n-t)}{(t+1)^2(t+2)},\]
and hence

\begin{align*}
    \Var(Y_s)&=s\Var(D_1)+s(s-1)\Cov(D_1,D_2)=s\left(1-\frac{s-1}{t}\right)\Var(D_1)\\
    &=\frac{s(t-s+1)(n+1)(n-t)}{(t+1)^2(t+2)}.
\end{align*}

Note that $Y_s$ is minimal at $s=\lfloor (t+1)/2 \rfloor$ and $s=\lceil (t+1)/2\rceil$, which is what one might intuitively expect before making any calculations.

The third part of Proposition~\ref{prop:conc} follows from the previous two parts using a more famous Chebyshev's inequality, which says that a variance bound implies concentration. 

\begin{theorem}[Chebyshev's inequality] Let $X$ be a bounded real-valued random variable. Then for any real number $q>0$,
\[\mathbb{P}\left(|X-\mathbb{E}[X]|\geq q\sqrt{\Var(X)}\right)\leq q^{-2}.\]
\end{theorem}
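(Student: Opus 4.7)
The plan is to deduce this from Markov's inequality applied to the nonnegative random variable $Y := (X - \mathbb{E}[X])^2$. Specifically, I would first observe that the event $|X - \mathbb{E}[X]| \geq q\sqrt{\Var(X)}$ coincides with the event $Y \geq q^2 \Var(X)$, since squaring is monotone on nonnegative reals. So it suffices to show $\mathbb{P}(Y \geq q^2 \Var(X)) \leq q^{-2}$.

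Next, I would establish Markov's inequality: for any nonnegative random variable $Z$ with finite expectation and any $a > 0$, $\mathbb{P}(Z \geq a) \leq \mathbb{E}[Z]/a$. The slick one-line argument is that $Z \geq a \cdot \mathbbm{1}_{Z \geq a}$ pointwise, so taking expectations and using linearity and monotonicity of expectation gives $\mathbb{E}[Z] \geq a \cdot \mathbb{P}(Z \geq a)$. Dividing by $a$ (which is positive) finishes the proof of Markov. The boundedness hypothesis on $X$ ensures that all the expectations in sight are finite, so there are no integrability subtleties.

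Finally, I would apply Markov to $Z = Y = (X - \mathbb{E}[X])^2$ with $a = q^2 \Var(X)$. Since $\mathbb{E}[Y] = \Var(X)$ by the very definition of variance, this yields
\[\mathbb{P}(Y \geq q^2 \Var(X)) \leq \frac{\Var(X)}{q^2 \Var(X)} = \frac{1}{q^2},\]
which by the first paragraph is exactly the desired bound. The only edge case is when $\Var(X) = 0$, in which case $X$ equals $\mathbb{E}[X]$ almost surely and the left-hand side is $0$ (or the threshold event is defined in the natural degenerate way), so the inequality holds trivially.

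There is no real obstacle here: the proof is two lines once Markov's inequality is in hand, and the main conceptual content is the reduction $|X - \mathbb{E}[X]| \geq q\sqrt{\Var(X)} \iff (X - \mathbb{E}[X])^2 \geq q^2 \Var(X)$. In the context of this paper, Chebyshev's inequality is being invoked as a black-box tool (it is standard and usually cited without proof), so likely the authors will simply state it rather than include a proof, but the above sketch is what I would write if a proof were required.
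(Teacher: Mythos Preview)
Your proof is correct and is the standard argument via Markov's inequality. As you anticipated, the paper does not actually prove Chebyshev's inequality; it merely states it as a well-known tool and immediately applies it, so there is no paper proof to compare against.
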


\begin{proof}[Proof of Proposition~\ref{prop:conc} (3)] This is just Chebyshev's inequality with $q=t^{1/6}$, followed by the fact that $\sqrt{\Var(X)}\leq \frac{n}{\sqrt{t}}$ by part (2).
\end{proof}

\subsection{The equidistribution theorem} In this subsection, we prove the following equidistribution theorem.

\begin{theorem}\label{thm:equistrong}
Fix $k\geq 3$ and $a\in \mathbb{Z}^+$. There is a real constant $C_{k,a}>0$ such that for $n\in \mathbb{Z}^+$ and $I=\{i_1,i_2,\ldots, i_a\}\subseteq [n]$ with $i_1\geq \sqrt{n}$, $i_{j+1}-i_j\geq \sqrt{n}$ for all $j$ with $1\leq j\leq a-1$, and $n-i_a\geq \sqrt{n}$, we have
\[d_k(I,n)=C_{k,a} f_k(n)\left(1+O_{k,a}\left(n^{-1/6}\right)\right).\]
\end{theorem}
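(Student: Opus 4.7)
The plan is to decompose $d_k(I,n)$ by splitting each $w \in \mathcal{D}_k(I,n)$ at the prescribed $k$-descents. Define the $a+1$ \emph{segments} $V_0, V_1, \ldots, V_a$ as the position ranges between consecutive prescribed $k$-descents (with $V_0$ before the first $k$-descent and $V_a$ after the last), of sizes $n_0 = i_1 - 1$, $n_j = i_{j+1} - i_j - k$ for $1 \leq j \leq a-1$, and $n_a = n - i_a - k + 1$, all of which are $\geq \sqrt{n} - k$ by the sparsity hypothesis. Let the \emph{$k$-descent blocks} $U_1, \ldots, U_a$ of size $k$ occupy the $k$-descent positions. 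A case analysis on where a potential unprescribed $k$-descent could lie (using $\sqrt{n} > 2k-2$ for large $n$ to rule out $k$-descents straddling two distinct $U_j$'s) shows that $w \in \mathcal{D}_k(I,n)$ iff it decomposes as a partition of $[n]$ into value sets $V_0, U_1, V_1, \ldots, U_a, V_a$ of these sizes, a $k$-descent-avoiding permutation on each $V_j$, and the forced decreasing arrangement of each $U_j$, satisfying the \emph{boundary conditions} $L_j < \max(U_j)$ and $\min(U_j) < F_j$ for each $j = 1, \ldots, a$, where $L_j$ (resp.\ $F_j$) denotes the last (resp.\ first) value of $V_{j-1}$ (resp.\ $V_j$). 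This gives
\[
d_k(I, n) = \binom{n}{n_0, k, n_1, k, \ldots, k, n_a} \left(\prod_{j=0}^a f_k(n_j)\right) P_{\mathrm{bdry}}(I, n),
\]
where $P_{\mathrm{bdry}}(I,n)$ is the probability (under uniform choice of partition and of the $k$-descent-avoiding permutation on each segment) that all boundary conditions hold. Applying Theorem~\ref{thm:nasy} to each $f_k(n_j)$ factor and using $\sum_j n_j = n - ak$ collapses the prefactor to $\tfrac{c_k^a}{(k!)^a r_k^{ak}} f_k(n)\bigl(1 + O_{k,a}(\gamma_k^{\sqrt{n}/2})\bigr)$, so the entire $I$-dependence is now isolated in $P_{\mathrm{bdry}}$.

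\textbf{Step 2: Limit of $P_{\mathrm{bdry}}$.} I would then show $P_{\mathrm{bdry}}(I, n) = p_k^a\bigl(1 + O(n^{-1/6})\bigr)$ for an explicit $p_k \in (0,1)$ depending only on $k$. The joint distribution of the $4a$ normalized boundary variables $\{L_j/n, F_j/n, \min U_j/n, \max U_j/n\}_{j=1}^a$ converges to a product distribution across the $j$-indexed quadruples. Within each quadruple: since the segment $V_{j-1}$ has size $\geq \sqrt{n}$, Theorem~\ref{thm:joint} gives that the ranks of $F_{j-1}$ and $L_j$ inside $V_{j-1}$ are asymptotically independent with marginal densities $\varphi_k$ and $\varphi_k(1-\cdot)$; then Proposition~\ref{prop:conc}(3) converts the relative rank inside $V_{j-1}$ into an absolute value in $[n]$ with both concentration window and exceptional probability bounded by $O(n^{-1/6})$ (inherited from $n_{j-1}^{-1/3} \leq n^{-1/6}$); for the fixed-size $U_j$, Lemma~\ref{lem:os} directly gives that $(\min U_j/n, \max U_j/n)$ converges to the joint (min, max) of $k$ i.i.d.\ uniform $[0,1]$ variables. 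Across different $j$'s, the boundary variables come from disjoint blocks of a single random partition of $[n]$, so they are coupled, but the induced correlations are $O(1/n)$ per pair of blocks and can be controlled by coupling to an i.i.d.\ uniform $[0,1]$ model in which the $j$-indexed quadruples are literally independent. In the limit,
\[
p_k = \mathbb{P}\bigl(L' < M,\; m < F'\bigr),
\]
where $L'$ has density $\varphi_k(1-\cdot)$, $F'$ has density $\varphi_k(\cdot)$, $(m,M)$ is the joint (min, max) of $k$ i.i.d.\ uniform $[0,1]$ variables, and $L', F', (m,M)$ are mutually independent. Combining with Step 1 proves Theorem~\ref{thm:equistrong} with $C_{k,a} = c_k^a p_k^a / \bigl((k!)^a r_k^{ak}\bigr)$.

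\textbf{Main obstacle.} The hardest part will be executing Step 2 with the right error tracking: the $O(n^{-1/6})$ in Theorem~\ref{thm:equistrong} matches exactly the rate in Proposition~\ref{prop:conc}(3) applied to subsets of the minimum allowed size $\sqrt{n}$, so there is no slack at any stage. Two challenges stand out. First, one must compose the rank-to-value conversion from Proposition~\ref{prop:conc} with the rank density $\varphi_k$ supplied by Theorem~\ref{thm:fmnasy}/\ref{thm:joint}, verifying that the concentration errors combine additively rather than multiplicatively. Second, decorrelating the boundary events across the $a$ different $k$-descent locations --- without which the constant $C_{k,a}$ would depend on $I$ --- requires a finite-$n$ coupling between the random-partition model and the i.i.d.\ uniform $[0,1]$ model; executing this coupling uniformly over all sparse $I$ is the most delicate step, and is precisely where the sparsity hypothesis $i_{j+1} - i_j \geq \sqrt{n}$ is essential.
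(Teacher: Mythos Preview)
Your proposal is correct and follows essentially the same route as the paper: the same block decomposition and multinomial formula, the same collapse of the prefactor via Theorem~\ref{thm:nasy}, and the same analysis of the boundary probability using Theorem~\ref{thm:joint} for the within-block first/last distribution and Proposition~\ref{prop:conc}(3) (with $t\ge\sqrt{n}$ giving the $n^{-1/6}$ rate) for the rank-to-value conversion. The paper writes the limiting boundary probability as a single integral over $[0,1]^{2(a+1)}$ (which indeed factors as your $p_k^a$, though this is not made explicit) and handles the cross-$j$ decorrelation by inserting the $k$-descent blocks sequentially and passing from a Riemann sum to the integral, rather than by an explicit coupling to i.i.d.\ uniforms; these are minor implementation differences within the same overall strategy.
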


Calling this an equidistribution theorem is motivated by the fact that the constant $C_{k,a}$ only depends on $k$ and $a$ (and not on $I$, as long as $|I|=a$). In particular, we have the following immediate corollary.
\begin{theorem}\label{thm:equi}
Fix $k\geq 3$ and $a\in \mathbb{Z}^+$. Let $n\in \mathbb{Z}^+$, $I_1,I_2\subseteq [n]$ with $|I_1|=|I_2|=a$, and no two elements of $I_1$ being closer to each other or $1$ or $n$ than $\sqrt{n}$, and similarly for $I_2$. Then 
\[\frac{d_k(I_1,n)}{d_k(I_2,n)}=1+O_{k,a}\left(n^{-1/6}\right).\]
\end{theorem}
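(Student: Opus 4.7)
Theorem~\ref{thm:equi} is an immediate corollary of Theorem~\ref{thm:equistrong}: applying the latter to $I_1$ and to $I_2$ separately yields $d_k(I_j,n) = C_{k,a} f_k(n)\bigl(1+O_{k,a}(n^{-1/6})\bigr)$ with the \emph{same} constant $C_{k,a}$ (depending only on $k$ and $a$), and dividing gives $d_k(I_1,n)/d_k(I_2,n) = 1+O_{k,a}(n^{-1/6})$. The substantive content is therefore Theorem~\ref{thm:equistrong}, and the remainder of this proposal describes my strategy for it.

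The plan is a block-decomposition argument generalizing the proof of Theorem~\ref{thm:dasy} in Section~\ref{sec:dasy}. Given $I=\{i_1<\cdots<i_a\}$ satisfying the sparsity condition, partition $[n]$ into the $a$ \emph{descent blocks} $A_j=[i_j,i_j+k-1]$ (each forced to be decreasing) and the $a+1$ \emph{free blocks} $F_0=[1,i_1-1]$, $F_j=[i_j+k,i_{j+1}-1]$ for $1\leq j\leq a-1$, and $F_a=[i_a+k,n]$; write $m_j=|F_j|$, and note each $m_j\geq\sqrt{n}-k$. A short case analysis of the possible extra $k$-descents (which must overlap a prescribed descent on exactly one side, since sparsity prevents overlapping two at once) shows that the only additional constraints for $w\in\mathcal{D}_k(I,n)$, beyond each $F_j$ being $k$-descent-avoiding, are the $2a$ boundary inequalities $w(i_j-1)<w(i_j)$ and $w(i_j+k-1)<w(i_j+k)$ for $j=1,\ldots,a$. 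This yields the factorization
\[
d_k(I,n)\;=\;\binom{n}{m_0,\,k,\,m_1,\,k,\,\ldots,\,m_{a-1},\,k,\,m_a}\,\prod_{j=0}^{a}f_k(m_j)\,\mathbb{P}(\mathcal{B}),
\]
where $\mathbb{P}(\mathcal{B})$ is the probability of the boundary event under the uniform random choice of a set-partition of $[n]$ into blocks of the prescribed sizes together with uniform random $k$-descent-avoiding orderings within each $F_j$. Substituting $f_k(N)\sim c_k N!\,r_k^N$ from Theorem~\ref{thm:nasy}, the prefactor in front of $\mathbb{P}(\mathcal{B})$ collapses to $\bigl(c_k/(k!\,r_k^k)\bigr)^{a} f_k(n)$ up to multiplicative error $1+O(\gamma_k^n)$---entirely free of any $m_j$-dependence.

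The crux is then to show $\mathbb{P}(\mathcal{B})=\Gamma_{k,a}\bigl(1+O(n^{-1/6})\bigr)$ for a positive constant $\Gamma_{k,a}$ depending only on $k$ and $a$. Each boundary compares a value drawn from a descent block (the max or min of $V^A_j$, which is marginally a uniform random $k$-subset of $[n]$ and whose scaled max and min have classical joint density $k(k-1)(y-x)^{k-2}$ on $0\leq x<y\leq 1$) against a value drawn from an adjacent free block (the first or last element of a uniform random $k$-descent-avoiding ordering of $V^F_j$). Within $V^F_j$, the rank of the first element has asymptotic density $\varphi_k$ by Theorem~\ref{thm:fmnasy}, the rank of the last has density $\varphi_k(1-\cdot)$, and by Theorem~\ref{thm:joint} these are asymptotically independent. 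Proposition~\ref{prop:conc}, applied with $t=m_j\sim\sqrt{n}$, then shows that the actual value of $V^F_j$ at a given rank equals the expected fraction of $n$ with probability $1-O(n^{-1/6})$---this is exactly the binding constraint responsible for the $n^{-1/6}$ exponent in the statement. Taking expectations, $\mathbb{P}(\mathcal{B})$ converges to an explicit multi-dimensional integral of $\varphi_k$ against the order-statistic densities; this limit depends only on $k$ and $a$, defining $\Gamma_{k,a}$ and hence $C_{k,a}$.

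The principal technical obstacle is the absence of literal independence: the $a+1$ free-block value sets and $a$ descent-block value sets are constrained to partition $[n]$, and boundary pair $j$ and boundary pair $j+1$ share $V^F_j$. Under sparsity each block occupies an $O(1/\sqrt{n})$ or $O(1/n)$ fraction of the universe, so the induced dependencies decay, but one must verify that the aggregate error from Theorem~\ref{thm:fmnasy}, Theorem~\ref{thm:joint}, and Proposition~\ref{prop:conc} across all $a+1$ blocks still fits into $O_{k,a}(n^{-1/6})$. This amounts to careful bookkeeping in the spirit of the estimates in Section~\ref{sec:dasy} and Section~\ref{sec:joint}, with no new machinery needed beyond what the paper has already developed.
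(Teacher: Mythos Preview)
Your proposal is correct and follows essentially the same approach as the paper. The paper likewise presents Theorem~\ref{thm:equi} as an immediate corollary of Theorem~\ref{thm:equistrong}, and your outlined strategy for the latter---block decomposition into $a$ descent blocks and $a{+}1$ free blocks, the factorization isolating $\mathbb{P}(\mathcal{B})$, collapsing the multinomial prefactor via Theorem~\ref{thm:nasy}, and then combining Theorem~\ref{thm:joint} with Proposition~\ref{prop:conc} to show $\mathbb{P}(\mathcal{B})$ converges to an integral depending only on $k$ and $a$---matches the paper's argument in structure and in every essential ingredient. The only cosmetic difference is that the paper packages the descent-block contribution into an explicit function $\theta_k(x,y)=1-x^k-(1-y)^k+\mathbbm{1}_{x>y}(x-y)^k$ rather than working with the joint density of the extremes, but this is the same computation.
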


The proof of Theorem~\ref{thm:equistrong} uses a similar random permutation framework as Section~\ref{sec:dasy}.
\begin{proof}[Proof of Theorem~\ref{thm:equistrong}]
Consider some $w\in \mathcal{D}_k(I,n)$. We define $v_1$ to be the restriction of $w$ to the first $t_1:=i_1-1$ indices. Then $v_1\in \mathcal{D}_k(\emptyset,t_1)$. The next $k$ indices will be a $k$-descent. We define $v_2$ to be the restriction of $w$ to the indices after that and before the next $k$-descent at $i_2$, i.e. the $v_2$ is the restriction of $w$ to the $t_2:=i_2-i_1-k$ indices $i_1+k,i_1+k+1,\ldots,i_2-1$. In general, for any $j$ with $2\leq j\leq a$, we define $v_j$ to be the restriction of $w$ to the indices between the $(j-1)$th and $j$th $k$-descent -- these are the $t_j:=i_j-i_{j-1}-k$ indices $i_{j-1}+k,i_{j-1}+k+1,\ldots, i_j-1$. We let $v_{a+1}$ be the restriction of $w$ to the indices after the $a$th (and final) $k$-descent -- these are the $t_{a+1}:=n-i_a-k+1$ indices $i_a+k,i_a+k+1,\ldots, n$. Note that for all $i\in [a+1]$, we have $v_i\in \mathcal{D}_k(\emptyset,t_i)$. 

We now switch gears to constructing a random permutation $w\in S_n$. We determine a permutation $w$ by randomly choosing various relative orderings and values. Specifically, the random process is the following.
\begin{enumerate}[label=\arabic*.]
    \item Deterministically fix the relative orderings of the prescribed $k$-descents. That is, for any $j\in [a]$, we fix the relative ordering of the elements $i_j,i_j+1,\ldots,i_j+k-1$ to be $k,k-1,\ldots,1$.
    \item For $t_1,t_2,\ldots,t_{a+1}$, independently pick a uniformly random $v_i\in \mathcal{D}(\emptyset,t_i)$. These will be the relative orderings of each of the $a+1$ blocks formed of indices not involved in a $k$-descent. That is, $v_1$ will be the restriction of $w$ to the indices $1,\ldots, i_1-1$, $v_2$ will be the restriction of $w$ to the indices $i_1+k, i_1+k+1,\ldots,i_2-1$, and so on, until $v_{a+1}$ being the restriction of $w$ to the indices $i_a+k, i_{a}+k+1,\ldots, n$.
    \item Pick a uniformly random partition of $[n-a k]$ into parts of sizes $t_1,t_2,\ldots, t_{a+1}$. These will be the sets of values of each of our $a+1$ blocks in the relative ordering of the union of their indices. (At this point, we have determined the restriction of $w$ to the union of the $a+1$ $k$-descent-avoiding blocks.
    \item (1) Pick a $k$-element subset of $[n-(a-1)k]$. These will be the values of the first $k$-descent in the relative ordering on the union of the $[n-a k]$ indices from the previous part and the $k$ indices $i_1,i_1+1,\ldots,i_1+k-1$. Then (2) pick a $k$-element subset of $[n-(a-2)k]$ to be the values of the second prescribed $k$-descent in the relative ordering where the indices $i_2,i_2+1,\ldots,i_2+k-1$ are unioned in as well. Continue so up until and including ($a$): pick a $k$-element subset of $[n]$ to be the values of the last prescribed $k$-descent at $i_a, i_a+1,\ldots i_a+k-1$ in the relative ordering of all $[n]$ indices. Having finished this, we have determined the permutation $w$.  
\end{enumerate}
Figure~\ref{fig:blocks} shows the $k$-descents, $k$-descent-avoiding blocks, and relative orderings in an example $w$.
\begin{figure}
    \centering
    \begin{tikzpicture}[scale=0.25]
        \node [align=left] at (-5,6.6) {\tiny{block $1$ of length $t_1=8$} \\ \tiny{and relative ordering $v_1=64278513$}};
        
        \node [align=left] at (8.8,8.4) {\tiny{block $2$ of length $t_2=14$} \\ \tiny{and relative ordering $v_2$}};
        
        \node [align=left] at (24,6.6) {\tiny{block $3$ of length $t_3=8$} \\ \tiny{and relative ordering $v_3=62785341$}};
        
        \node [align=left] at (35,9) {\tiny{block $4$ of length $t_4=5$} \\ \tiny{and relative ordering $v_4=42531$}};
        
		\node [draw=none] (0) at (-5.25, -0.25) {};
		\node [draw=none] (1) at (-4, -1.25) {};
		\node [draw=none] (2) at (-3, 3) {};
		\node [draw=none] (3) at (-2.25, 3.75) {};
		\node [draw=none] (4) at (-1.5, 1.25) {};
		\node [draw=none] (5) at (-1, -1.5) {};
		\node [draw=none] (6) at (0, -0.75) {};
		\node [draw=none] (7) at (0.75, 0.5) {};
		\node [draw=none] (8) at (1.5, -0.75) {};
		\node [draw=none] (9) at (2.25, -1.25) {};
		\node [draw=none] (10) at (3.25, -1.75) {};
		\node [draw=none] (11) at (4.25, -0.75) {};
		\node [draw=none] (12) at (4.75, 2.25) {};
		\node [draw=none] (13) at (5.75, 3.75) {};
		\node [draw=none] (14) at (7, 4.75) {};
		\node [draw=none] (15) at (6.75, -0.5) {};
		\node [draw=none] (16) at (5.5, 1.75) {};
		\node [draw=none] (17) at (7.75, -1.25) {};
		\node [draw=none] (18) at (8.75, 0.5) {};
		\node [draw=none] (19) at (9.5, 1.25) {};
		\node [draw=none] (20) at (10.5, -0.75) {};
		\node [draw=none] (21) at (11, 3.25) {};
		\node [draw=none] (22) at (12, -0.75) {};
		\node [draw=none] (23) at (13, -2.5) {};
		\node [draw=none] (24) at (13.5, 1.75) {};
		\node [draw=none] (25) at (14, -1.5) {};
		\node [draw=none] (26) at (14.75, 0) {};
		\node [draw=none] (27) at (15.5, -0.75) {};
		\node [draw=none] (28) at (16.25, -1.5) {};
		\node [draw=none] (29) at (17, -3.25) {};
		\node [draw=none] (30) at (18.5, 2.75) {};
		\node [circle,fill,scale=0.2] (31) at (19.75, -1.25) {};
		\node [draw=none] (32) at (20.5, 3.5) {};
		\node [draw=none] (33) at (21.75, 4) {};
		\node [draw=none] (34) at (22.75, 2) {};
		\node [draw=none] (35) at (23.5, 1) {};
		\node [draw=none] (36) at (24.25, 2) {};
		\node [draw=none] (37) at (25, -1.75) {};
		\node [draw=none] (38) at (26, 0) {};
		\node [draw=none] (39) at (27, -1.25) {};
		\node [draw=none] (40) at (27.75, -2.25) {};
		\node [draw=none] (41) at (28.5, -3.25) {};
		\node [draw=none] (42) at (29.25, 2.5) {};
		\node [draw=none] (43) at (30, 1.25) {};
		\node [draw=none] (44) at (30.75, 4) {};
		\node [draw=none] (45) at (31.5, 2.5) {};
		\node [draw=none] (46) at (34.25, -1.5) {};
		\node [draw=none] (48) at (-6.25, 1.75) {};
		\node [circle,fill,scale=0.2] (49) at (-6.25, 1.75) {};
		\node [circle,fill,scale=0.2] (50) at (-5.25, -0.25) {};
		\node [circle,fill,scale=0.2] (51) at (-4, -1.25) {};
		\node [circle,fill,scale=0.2] (52) at (-3, 3) {};
		\node [circle,fill,scale=0.2] (53) at (-2.25, 3.75) {};
		\node [circle,fill,scale=0.2] (54) at (-1.5, 1.25) {};
		\node [circle,fill,scale=0.2] (55) at (-1, -1.5) {};
		\node [circle,fill,scale=0.2] (56) at (0, -0.75) {};
		\node [circle,fill,scale=0.2] (57) at (0.75, 0.5) {};
		\node [circle,fill,scale=0.2] (58) at (1.5, -0.75) {};
		\node [circle,fill,scale=0.2] (59) at (2.25, -1.25) {};
		\node [circle,fill,scale=0.2] (60) at (3.25, -1.75) {};
		\node [circle,fill,scale=0.2] (61) at (4.25, -0.75) {};
		\node [circle,fill,scale=0.2] (62) at (4.75, 2.25) {};
		\node [circle,fill,scale=0.2] (63) at (5.5, 1.75) {};
		\node [circle,fill,scale=0.2] (64) at (5.75, 3.75) {};
		\node [circle,fill,scale=0.2] (65) at (6.75, -0.5) {};
		\node [circle,fill,scale=0.2] (66) at (7, 4.75) {};
		\node [circle,fill,scale=0.2] (67) at (7.75, -1.25) {};
		\node [circle,fill,scale=0.2] (68) at (8.75, 0.5) {};
		\node [circle,fill,scale=0.2] (69) at (9.5, 1.25) {};
		\node [circle,fill,scale=0.2] (70) at (10.5, -0.75) {};
		\node [circle,fill,scale=0.2] (71) at (11, 3.25) {};
		\node [circle,fill,scale=0.2] (72) at (12, -0.75) {};
		\node [circle,fill,scale=0.2] (73) at (13, -2.5) {};
		\node [circle,fill,scale=0.2] (74) at (13.5, 1.75) {};
		\node [circle,fill,scale=0.2] (75) at (14, -1.5) {};
		\node [circle,fill,scale=0.2] (76) at (14.75, 0) {};
		\node [circle,fill,scale=0.2] (77) at (15.5, -0.75) {};
		\node [circle,fill,scale=0.2] (78) at (16.25, -1.5) {};
		\node [circle,fill,scale=0.2] (79) at (17, -3.25) {};
		\node [circle,fill,scale=0.2] (80) at (18.5, 2.75) {};
		\node [circle,fill,scale=0.2] (81) at (20.5, 3.5) {};
		\node [circle,fill,scale=0.2] (82) at (21.75, 4) {};
		\node [circle,fill,scale=0.2] (83) at (22.75, 2) {};
		\node [circle,fill,scale=0.2] (84) at (23.5, 1) {};
		\node [circle,fill,scale=0.2] (85) at (24.25, 2) {};
		\node [circle,fill,scale=0.2] (86) at (25, -1.75) {};
		\node [circle,fill,scale=0.2] (87) at (26, 0) {};
		\node [circle,fill,scale=0.2] (88) at (27, -1.25) {};
		\node [circle,fill,scale=0.2] (89) at (27.75, -2.25) {};
		\node [circle,fill,scale=0.2] (90) at (28.5, -3.25) {};
		\node [circle,fill,scale=0.2] (91) at (29.25, 2.5) {};
		\node [circle,fill,scale=0.2] (92) at (30, 1.25) {};
		\node [circle,fill,scale=0.2] (93) at (30.75, 4) {};
		\node [circle,fill,scale=0.2] (94) at (31.5, 2.5) {};
		\node [circle,fill,scale=0.2] (95) at (34.25, -1.5) {};
		\node [circle,fill,scale=0.2] (96) at (19.75, -1.25) {};
		\node [draw=none] (97) at (-7, 4.75) {};
		\node [draw=none] (98) at (-7.25, -1.75) {};
		\node [draw=none] (99) at (0.5, -2) {};
		\node [draw=none] (100) at (0, 4.75) {};
		\node [draw=none] (101) at (4, -2.5) {};
		\node [draw=none] (102) at (14.25, -3) {};
		\node [draw=none] (103) at (14.25, 6) {};
		\node [draw=none] (104) at (3.75, 5.75) {};
		\node [draw=none] (105) at (17.75, -2) {};
		\node [draw=none] (106) at (17.25, 4.75) {};
		\node [draw=none] (107) at (25.5, -2) {};
		\node [draw=none] (108) at (25.25, 4.5) {};
		\node [draw=none] (109) at (28.25, 4) {};
		\node [draw=none] (110) at (29.25, -1.25) {};
		\node [draw=none] (111) at (34.5, -2.5) {};
		\node [draw=none] (112) at (34.5, 4.75) {};
		\draw (48.center) to (0.center);
		\draw (0.center) to (1.center);
		\draw (1.center) to (2.center);
		\draw (2.center) to (3.center);
		\draw (3.center) to (4.center);
		\draw (4.center) to (5.center);
		\draw (5.center) to (6.center);
		\draw (6.center) to (7.center);
		\draw (7.center) to (8.center);
		\draw (8.center) to (9.center);
		\draw (9.center) to (10.center);
		\draw (10.center) to (11.center);
		\draw (11.center) to (12.center);
		\draw (12.center) to (16.center);
		\draw (16.center) to (13.center);
		\draw (13.center) to (15.center);
		\draw (15.center) to (14.center);
		\draw (14.center) to (17.center);
		\draw (17.center) to (18.center);
		\draw (18.center) to (19.center);
		\draw (19.center) to (20.center);
		\draw (20.center) to (21.center);
		\draw (21.center) to (22.center);
		\draw (22.center) to (23.center);
		\draw (23.center) to (24.center);
		\draw (24.center) to (25.center);
		\draw (25.center) to (26.center);
		\draw (26.center) to (27.center);
		\draw (27.center) to (28.center);
		\draw (28.center) to (29.center);
		\draw (29.center) to (30.center);
		\draw (30.center) to (31);
		\draw (31) to (32.center);
		\draw (32.center) to (33.center);
		\draw (33.center) to (34.center);
		\draw (34.center) to (35.center);
		\draw (35.center) to (36.center);
		\draw (36.center) to (37.center);
		\draw (37.center) to (38.center);
		\draw (38.center) to (39.center);
		\draw (39.center) to (40.center);
		\draw (40.center) to (41.center);
		\draw (41.center) to (42.center);
		\draw (42.center) to (43.center);
		\draw (43.center) to (44.center);
		\draw (44.center) to (45.center);
		\draw (45.center) to (46.center);
		\draw [red] (97.center) to (100.center);
		\draw [red] (100.center) to (99.center);
		\draw [red] (99.center) to (98.center);
		\draw [red] (98.center) to (97.center);
		\draw [red] (104.center) to (101.center);
		\draw [red] (101.center) to (102.center);
		\draw [red] (102.center) to (103.center);
		\draw [red] (103.center) to (104.center);
		\draw [red] (106.center) to (105.center);
		\draw [red] (105.center) to (107.center);
		\draw [red] (107.center) to (108.center);
		\draw [red] (108.center) to (106.center);
		\draw [red] (109.center) to (110.center);
		\draw [red] (110.center) to (111.center);
		\draw [red] (111.center) to (112.center);
		\draw [red] (112.center) to (109.center);

\end{tikzpicture}

    \caption{$k=4$, $n=47$, and $w\in S_{47}$ is a permutation with $k$-descents starting at $i_1=9$, $i_2=27$, $i_3=39$. The number of $k$-descents is $a=3$, and the $3+1=4$ blocks avoiding $k$-descents are shown in red.}
    \label{fig:blocks}
\end{figure}
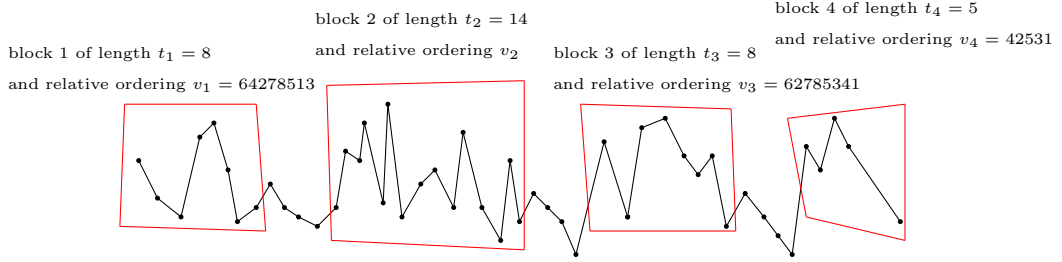

By what we observed in the first paragraph of the proof, each permutation $w\in \mathcal{D}_k(I,n)$ can be generated by this process (in a unique way). However, not every permutation generated by this is in $\mathcal{D}_k(I,n)$. In fact, a $w$ generated like this will be in $\mathcal{D}_k(I,n)$ if and only if each of the pre-determined $k$-descents is preceded and followed by an ascent. That is, for such $w$, \[w\in \mathcal{D}_k(I,n)\iff \forall j\in [a], w(i_j-1)<w(i_j),w(i_j+k-1)<w(i_j+k).\]
Let us call this event $\mathcal{E}_{I,n}$. Then, by counting the total number of permutations that can be constructed with the described procedure, we get
\[d_k(I,n)=\left(\prod_{j=1}^{a+1}f_k(t_j)\right)\frac{n!}{t_1!t_2!\cdots t_{\ell+1}!\left(k!\right)^a}\mathbb{P}(\mathcal{E}_{I,n}).\]

We claim that it now suffices to show that \[\mathbb{P}(\mathcal{E}_{I,n})=C_{k,a}\left(1+O\left(n^{-1/6}\right)\right),\] where $C_{k,a}$ is a constant only depending on $a$ and $k$ (and not on $I$). Indeed, if this is given, then using this and Theorem~\ref{thm:nasy} on the previous expression for $d_k(I,n)$ we get:

\begin{align*}
    \frac{d_k(I,n)}{f_k(n)}&=\frac{1}{f_k(n)}\left(\prod_{j=1}^{a+1}f_k(t_j)\right)\frac{n!}{t_1!t_2!\cdots t_{a+1}!\left(k!\right)^a}\mathbb{P}(\mathcal{E}_{I,n})\\
    &=\frac{c_k^a}{k!r_k^{ak}}C_{k,a}\left(1+O\left(n^{-1/6}\right)\right)
\end{align*}

We proceed to show that $\mathbb{P}(\mathcal{E}_{I,n})=C_{k,a}\left(1+O\left(n^{-1/6}\right)\right)$. Let us consider the state after completing step 2 of the above procedure. We claim that the joint distribution of the first and last elements in each relative ordering is asymptotically known. Namely, Theorem~\ref{thm:joint} together with independence of different blocks gives that for any sequence of integers \[m_{1,1},m_{2,1},m_{1,2},m_{2,2},m_{1,3},m_{2,3},\ldots, m_{1,a+1},m_{2,a+1}\]
such that for all $j\in [a+1]$,
\[m_{1,j}\neq m_{2,j} \hspace{3mm}\text{and}\hspace{3mm}1\leq m_{1,j},m_{2,j}\leq t_j,\]
we have
\[\mathbb{P}\left(v_1(1)=m_{1,1}, v_1(t_1)=m_{2,1},\ldots, v_{r+1}(1)=m_{1,a+1},v_{a+1}(t_{a+1})=m_{2,a+1}\right)=\]
\[=\frac{\varphi_k\left(\frac{m_{1,1}}{t_1}\right)\varphi_k\left(1-\frac{m_{2,1}}{t_1}\right)\cdots\varphi_k\left(\frac{m_{1,a+1}}{t_{a+1}}\right)\varphi_k\left(1-\frac{m_{2,a+1}}{t_{a+1}}\right)}{t_1^2\cdots t_{a+1}^2}\left(1+O\left(n^{-0.49}\right)\right).\]

We now fix $m_{1,j},m_{2,j}$ for all $j$ and consider step 3. We call the values assigned to the beginning and end of each block in step 3 respectively $\ell_{1,1},\ell_{2,1}, \ldots, \ell_{1,a+1},\ell_{2,a+1}$. By our concentration result Proposition~\ref{prop:conc} (3), the probability that $\frac{\ell_{1,1}}{n}$ is within $n^{-1/6}$ of $\frac{m_{1,1}}{t_1}$ is at least $1-n^{-1/6}$. By union-bounding, the probability that the analogous statement holds for all $\ell_{1,j}$ and $\ell_{2,j}$ simultaneously is at least $1-O_{k,a}\left(n^{-1/6}\right)$. The other case has probability at most $O\left(n^{-1/6}\right)$, which will be included in the error term later.

We now claim that there is a continuous function $\theta_k(x,y)\colon [0,1]^2\to [0,1]$, continuously differentiable on $[0,1]^2$, such that the following holds. We fix any sequence $\ell_{1,1},\ell_{2,1}, \ldots, \ell_{1,a+1},\ell_{2,a+1}$ to be the values of first and last elements of blocks assigned in step 3.  Then the probability that we get the desired ordering in step 4.(1), i.e. there is an ascent before and after the $k$-descent at $i_1$, is $\theta_k\left(\frac{\ell_{2,1}}{n},\frac{\ell_{1,2}}{n}\right)+O\left(n^{-0.5}\right)$.

The proof of this claim boils down to estimating binomial coefficients, and is similar to the proof of Lemma~\ref{lem:os}. Note that we are choosing $k$ elements from $[n-(a-1)k]$, for which there are a total of $\binom{n-(a-1)k}{k}$ options. We wish to count the number of options for which the largest of these $k$ elements is greater than the $\ell_{2,1}$th of the other $n-ak$ elements, which happens iff the largest is at least $\ell_{2,1}+k$; and simultaneously the smallest of these $k$ elements is at most the value of the $\ell_{1,2}$th of the other $n-ak$ elements, which happens iff the smallest is less than $\ell_{1,2}$. Motivated by this, let us now count the number of ways to pick a $k$-element subset of $[n]$ such that the largest element is greater than $\ell_2$ and the smallest element is at most $\ell_1$. The total number of choices of $k$ elements out of $[n]$ is $\binom{n}{k}$, out of which $\binom{\ell_2}{k}$ have largest element at most $\ell_2$, and $\binom{n-\ell_1}{k}$ have smallest element greater than $\ell_1$, with either $\binom{\ell_2-\ell_1}{k}$ or $0$ choices having both, depending on whether $\ell_2>\ell_1$. Hence, the number of ways to pick a $k$-element subset for which the largest element is greater than $\ell_2$ and the smallest element is at most $\ell_1$ is
$\binom{n}{k}-\binom{\ell_2}{k}-\binom{\ell_1}{k}+\mathbbm{1}_{\ell_2>\ell_1}\binom{\ell_2-\ell_1}{k}$. So the probability of such a choice is
\[1-\frac{\binom{\ell_2}{k}}{\binom{n}{k}}-\frac{\binom{\ell_1}{k}}{\binom{n}{k}}+\mathbbm{1}_{\ell_2>\ell_1}\frac{\binom{\ell_2-\ell_1}{k}}{\binom{n}{k}}\]
\[=1-\left(\frac{\ell_2}{n}\right)^k-\left(\frac{\ell_1}{n}\right)^k+\mathbbm{1}_{\ell_2>\ell_1}\left(\frac{\ell_2-\ell_1}{n}\right)^k+O\left(n^{-0.5}\right),\]
where the binomial coefficients are estimated like in the proof of Lemma~\ref{lem:os} by splitting into cases according to whether we are choosing at most $\sqrt{n}$ or more than $\sqrt{n}$ elements. Now we note that the last expression is $\theta_k\left(\frac{\ell_2}{n}, \frac{\ell_1}{n}\right)+O\left(n^{-0.5}\right)$, where
\[\theta_k(x,y)\colon [0,1]^2\to [0,1], \hspace{5mm} (x,y)\mapsto 1-x^k-y^k+\mathbbm{1}_{x>y}(x-y)^k.\]

Note that $\theta_k$ is continuously differentiable on $[0,1]^2$, so there is a bound on its derivative (over all points in $[0,1]^2$ and all directions). When calculating this probability, the total number of elements was $n$ instead of $n-ak$ as in the initial setup, $\ell_{2,1}+k$ was replaced with $\ell_2+1$, and $\ell_{1,2}$ was replaced with $\ell_1-1$. However, all of these are $O_{k,a}\left(\frac{1}{n}\right)$ changes to an input of $\theta_k$, so since $\theta_k$ has bounded derivative, these contribute a $O\left(\frac{1}{n}\right)$ change to the value of $\theta_k$. Hence, the probability that we get the desired ordering in step 4.(1), i.e., there is an ascent before and after the $k$-descent at $i_1$, is $\theta_k\left(\frac{\ell_{2,1}}{n},\frac{\ell_{1,2}}{n}\right)+O\left(n^{-0.5}\right)$. The existence of such $\theta_k$ is what we wanted to show. We now further claim that conditioned on any choice in 4.(1), the probability that we get the desired ascents in step 4.(2), i.e., that there is an ascent before and after the $k$-descent at $i_2$, is also $\theta_k\left(\frac{\ell_{2,2}}{n},\frac{\ell_{1,3}}{n}\right)+O_{k,a}\left(n^{-\beta}\right)$, and so on (conditioning on any previous choices) until 4.($a$). The proof of this further claim is almost exactly the same, with the only additional observations being that the total number of elements is at most $ak$ away from $n$ at any step, and that an index which has value $\ell_{u,j}$ after step 3. of the process will have value at least $\ell_{u,j}$ and at most $\ell_{u,j}+ak$ when we get to deciding the values of the $k$-descent adjacent to it in step 4. But there is a uniform $O_{k,a}\left(1/n\right)$ bounding these changes to inputs, so again since $\theta_k$ has bounded derivative, (conditional on any sequence of choices in steps 4.(1), $\ldots$, 4.(j-1),) the probability of getting the desired orderings in step 4.(j) is $\theta_k\left(\frac{\ell_{2,j}}{n},\frac{\ell_{1,j+1}}{n}\right)+O_{k,a}\left(n^{-0.5}\right)$. The probability that we get the desired ordering in all steps $4.(1)$ up to $4.(a)$ is then
\[\theta_k\left(\frac{\ell_{2,1}}{n},\frac{\ell_{1,2}}{n}\right)\cdot\theta_k\left(\frac{\ell_{2,2}}{n},\frac{\ell_{1,3}}{n}\right)\cdot \ldots \cdot \theta_k\left(\frac{\ell_{2,a}}{n},\frac{\ell_{1,a+1}}{n}\right)+O_{k,a}\left(n^{-0.5}\right).\]

As noted before, there is a probability of $1-O_{k,a}\left(n^{-1/6}\right)$ that in step 3, all $\frac{\ell_{u,j}}{n}$ are at most $n^{-1/6}$ away from $\frac{m_{u,j}}{t_j}$. Hence, with probability $1-O_{k,a}$, after choosing $m_{u,j}$, the probability that we get the desired ordering in all steps 4.(1) up to 4.(a) is
\[\theta_k\left(\frac{m_{2,1}}{t_1},\frac{m_{1,2}}{t_2}\right)\cdot\theta_k\left(\frac{m_{2,2}}{t_2},\frac{m_{1,3}}{t_3}\right)\cdot \ldots \cdot \theta_k\left(\frac{m_{2,a}}{t_a},\frac{m_{1,a+1}}{t_{a+1}}\right)+O_{k,a}\left(n^{-1/6}\right),\]

where we have again used the fact that $\theta_k$ has bounded derivative (together with the fact that any input is changed by at most $n^{-1/6}$. To find the overall probability, it remains to sum over sequences \[m_{1,1},m_{2,1},m_{1,2},m_{2,2},m_{1,3},m_{2,3},\ldots, m_{1,a+1},m_{2,a+1}\]
such that for all $j\in [a+1]$,
\[m_{1,j}\neq m_{2,j} \hspace{3mm}\text{and}\hspace{3mm}1\leq m_{1,j},m_{2,j}\leq t_j.\]

Namely, $\mathbb{P}\left(\mathcal{E}_{I,n}\right)$ is the sum over all such sequences of the probability that we get this sequence in step 2 times the probability of getting the right ordering in step $4$ conditional on having this sequence in step 2. We have already found both of these probabilities. Writing it out, we get 
\[\mathbb{P}\left(\mathcal{E}_{I,n}\right)=\]\[\sum_{\substack{(m_{u,j})_{(u,j)\in [2]\times [a]}\\ \forall j, m_{1,j}\neq m_{2,j}\\ \forall j, 1\leq m_{1,j},m_{2,j}\leq t_j}}\frac{\varphi_k\left(\frac{m_{1,1}}{t_1}\right)\varphi_k\left(1-\frac{m_{2,1}}{t_1}\right)\ldots\varphi_k\left(\frac{m_{1,a+1}}{t_{a+1}}\right)\varphi_k\left(1-\frac{m_{2,a+1}}{t_{a+1}}\right)}{t_1^2\cdots t_{a+1}^2}\left(1+O\left(n^{-0.49}\right)\right)\]
\[\times \left(O_{k,a}\left(n^{-1/6}\right)+\left(1-O_{k,a}\left(n^{-1/6}\right)\right)\left(\theta_k\left(\frac{m_{2,1}}{t_1},\frac{m_{1,2}}{t_2}\right)\cdot \ldots \cdot \theta_k\left(\frac{m_{2,a}}{t_a},\frac{m_{1,a+1}}{t_{a+1}}\right)+O_{k,a}\left(n^{-1/6}\right)\right)\right),\]

The first $O_{k,a}\left(n^{-1/6}\right)$ term on the second line of this equation is the contribution of the choices in step $3$ where the gap between some $\frac{\ell_{u,j}}{n}$ and $\frac{m_{u,j}}{t_j}$ is more than $n^{-1/6}$. The $\left(1-O_{k,a}\left(n^{-1/6}\right)\right)$ term is the probability that all gaps turn out to be $\left\lvert\frac{\ell_{u,j}}{n}-\frac{m_{u,j}}{t_j}\right\rvert\leq n^{-1/6}$ in step $3$.

We can now collect all error terms into one additive error term of $O\left(n^{-1/6}\right)$ at the front, getting
\[\mathbb{P}\left(\mathcal{E}_{I,n}\right)=O_{k,a}\left(n^{-1/6}\right)\]\[+\sum_{\substack{(m_{u,j})_{(u,j)\in [2]\times [a]}\\ \forall j, m_{1,j}\neq m_{2,j}\\ \forall j, 1\leq m_{1,j},m_{2,j}\leq t_j}}\frac{\varphi_k\left(\frac{m_{1,1}}{t_1}\right)\varphi_k\left(1-\frac{m_{2,1}}{t_1}\right)\cdots\varphi_k\left(\frac{m_{1,a+1}}{t_{a+1}}\right)\varphi_k\left(1-\frac{m_{2,a+1}}{t_{a+1}}\right)}{t_1^2\cdots t_{a+1}^2}\]
\[\times\theta_k\left(\frac{m_{2,1}}{t_1},\frac{m_{1,2}}{t_2}\right)\cdot \ldots \cdot \theta_k\left(\frac{m_{2,a}}{t_a},\frac{m_{1,a+1}}{t_{a+1}}\right).\]

Now consider the following integral:
\[\int_{[0,1]^{2(a+1)}}\varphi_k(x_1)\varphi_k(1-y_1)\theta_k(y_1,x_2)\varphi_k(x_2)\varphi_k(1-y_2)\theta_k(y_2,x_3)\cdots\varphi_k(1-y_{a+1}).\]
Note that the sum appearing in the expression for $\mathbb{P}\left(\mathcal{E}_{I,n}\right)$ is a Riemann sum for this integral with cells of shape $\frac{1}{t_1}\times \frac{1}{t_1}\times \frac{1}{t_2}\times \frac{1}{t_2}\times\cdots \times \frac{1}{t_{a+1}}\times\frac{1}{t_{a+1}}$ minus the terms coming from cells corresponding to $m_{1,j}=m_{2,j}$. The domain $[0,1]^{2(a+1)}$ is compact so the integrand is bounded. Since all $t_j\geq \sqrt{n}$, the missing cells have total area at most $a\frac{1}{\sqrt{n}}=O_{k,a}\left(n^{-0.5}\right)$. The last two sentences together imply that the contribution of missing terms is at most $O_{k,a}\left(n^{-0.5}\right)$. As for the difference between the Riemann sum and the integral, since the derivative of the integrand (at any point and in any direction) is bounded, the error from each cell is at most its volume times $O_{k,a}\left(n^{-0.5}\right)$, for a total error of $O_{k,a}\left(n^{-0.5}\right)$. Hence, the difference between the sum in the expression for $\mathbb{P}\left(\mathcal{E}_{I,n}\right)$ and the integral is at most $O_{k,a}\left(n^{-0.5}\right)+O_{k,a}\left(n^{-0.5}\right)=O_{k,a}\left(n^{-0.5}\right)$. Plugging this integral into the expression for $\mathbb{P}\left(\mathcal{E}_{I,n}\right)$, we arrive at

\[\mathbb{P}(\mathcal{E}_{I,n})=O_{k,a}\left(n^{-1/6}\right)+\int_{[0,1]^{2(a+1)}}\varphi_k(x_1)\varphi_k(1-y_1)\theta_k(y_1,x_2)\varphi_k(x_2)\varphi_k(1-y_2)\theta_k(y_2,x_3)\cdots\varphi_k(1-y_{a+1}).\]

Crucially, this last integral only depends on $k$ and $a$ (and not $I$ or $n$). We can make the error term multiplicative by noting that this constant $C_{k,a}$ is greater than $0$. This was what remained to be proven.
\end{proof}

\section{Open problems}
One open problem is whether Conjecture~\ref{conj:dudu} can be proven for any $i$ (but asymptotically in $n$) with our approach. This would probably require a better understanding of the constants $c_{I,k}$. Another direction would be to prove versions of Theorem~\ref{thm:fmnasy}, Theorem~\ref{thm:dasy}, Theorem~\ref{thm:joint}, Theorem~\ref{thm:equistrong} with smaller error bounds. For Theorem~\ref{thm:equistrong}, one can also investigate the range of gap sizes between descents (in place of $\sqrt{n}$) for which an analog of the result holds, as well as try to find a good dependence of the error term on the gap size. Two more open problem are whether the discussion in Subsection~\ref{subsec:heuristic} can be made rigorous, and whether there is a way to simplify or explicitly understand the expression for $T_k(x,y)$ in Proposition~\ref{prop:genf}.

It also remains open whether this method can be generalized to other consecutive patterns in place of $k, k-1, \ldots, 1$. We expect that with an approach like the one in this paper, the main difficulty is in obtaining a nice expression for the analog of $\varphi_k(x)$, as we doubt that something as nice as Proposition~\ref{prop:fmn} will be available. However, we think that analogs of Theorem~\ref{thm:dasy} and Theorem~\ref{thm:equi} should still hold for other consecutive patterns. Before stating these as conjectures, we define some notation for other consecutive patterns. For $\pi\in S_k$ and $w\in S_n$, we let $P_\pi(w)$ be the set of starting indices of consecutive patterns $\pi$ in $w$. 

\begin{definition}
For $n\in \mathbb{Z}^+$ and $I\subseteq \mathbb{Z}^+$ a finite set, we let
\[\mathcal{P}_{\pi}(I,n)=\{w\in S_n\colon P_\pi(w)=I\}\hspace{5mm}\text{and}\hspace{5mm} p_\pi(I,n)=|\mathcal{P}_\pi(I,n)|.\]
We call $\mathcal{P}_\pi(I,n)$ the consecutive-$\pi$-function. Furthermore, we let
\[p_\pi(n):=p_{\pi}(\emptyset,n).\]
\end{definition}

To connect this up with familiar notation, note that $D_k(w)=P_{k,\ldots,1}(w)$, $\mathcal{D}_k(I,n)=\mathcal{P}_{k,\ldots,1}(I,n)$, and $f_k(n)=p_{k,\ldots,1}(n)$. We conjecture the following analogue of Theorem~\ref{thm:dasy}.

\begin{conjecture}
For any $k\geq 3$, any $\pi\in S_k$, and any finite $I\subseteq \mathbb{Z}^+$, there are constants $c_{\pi,I},\alpha\in \mathbb{R}$ with $\alpha>0$ such that 
\[p_{\pi}(I,n)=c_{\pi,I}p_\pi(n)\left(1+O\left(n^{-\alpha}\right)\right).\]
\end{conjecture}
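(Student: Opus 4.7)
The plan is to generalize the proof of Theorem~\ref{thm:dasy} to an arbitrary pattern $\pi\in S_k$, using three ingredients in order: (i) the asymptotic $p_\pi(n)=c_\pi r_\pi^n n!\bigl(1+O(\gamma_\pi^n)\bigr)$ with $0\leq \gamma_\pi<1$, which is a standard output of the cluster method of Goulden--Jackson and Elizalde--Noy; (ii) a generalization of Theorem~\ref{thm:fmnasy} giving the asymptotic joint distribution of the first $k-1$ entries of a uniformly random $\pi$-avoider; and (iii) the same prefix/suffix decomposition together with boundary correction used in Section~\ref{sec:dasy}.

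The combinatorial skeleton goes through unchanged. Writing $t=\max(I)+k-1$, each $w\in\mathcal{P}_\pi(I,n)$ decomposes uniquely as a prefix $\tau\in\mathcal{P}_\pi(I,t)$, a $\pi$-avoiding suffix $v\in \mathcal{P}_\pi(\emptyset,n-t)$, and a choice of $t$ values in $[n]$ for the prefix. The only way such a triple fails to produce an element of $\mathcal{P}_\pi(I,n)$ is that an unprescribed occurrence of $\pi$ straddles the boundary, starting at one of the positions $\max(I)+1,\ldots,\max(I)+k-1$. Whether this happens is determined by the relative order of the last $k-1$ entries of $\tau$, the first $k-1$ entries of $v$, and the concrete values placed at those $2k-2$ positions. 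By the concentration of discrete order statistics (Proposition~\ref{prop:conc}) and the conjectured analog of Theorem~\ref{thm:fmnasy}, this boundary probability should converge with polynomial rate to an explicit integral of a would-be limit density $\varphi_\pi$ against the order-statistic densities $\Phi^t_s$, just as in the derivation leading to Theorem~\ref{thm:precdasy}. Summing over $\tau\in\mathcal{P}_\pi(I,t)$ and dividing by $p_\pi(n)$ via (i) would then yield a constant $c_{\pi,I}$ with the desired error $O(n^{-\alpha})$.

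The heart of the matter, and the main obstacle, is establishing step (ii). For the reverse pattern $\pi=k,k-1,\ldots,1$, Proposition~\ref{prop:fmn} provided a clean inclusion--exclusion expansion of $f_k(m,n)$ that, combined with (i), handed us $\varphi_k$ on a platter; no such identity should be expected for arbitrary $\pi$. The natural substitute is an analytic argument: let $p_\pi(\sigma,n)$ count $\pi$-avoiders whose first $k-1$ entries have relative order $\sigma\in S_{k-1}$, and let $B_\pi^\sigma(x)$ be the corresponding exponential generating function. One would seek to show that every $B_\pi^\sigma$ has the same dominant singularity $1/r_\pi$ as $B_\pi$ and that this singularity is simple, yielding \[p_\pi(\sigma,n)\sim c_{\pi,\sigma}r_\pi^n n!,\] which, after refining from order types to concrete values at the first $k-1$ positions, would produce the desired $\varphi_\pi$.

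An alternative, more probabilistic route is to define a transfer operator on local left-end configurations of $\pi$-avoiders and establish a spectral gap; this would give exponentially fast convergence of the first-few-entries distribution, much stronger than the polynomial rate the conjecture asks for. In either approach the technical difficulty is the same: the cluster-method generating functions for $\pi$-avoiders branched by initial configuration are substantially more intricate than the single-variable case, and controlling their dominant singularities uniformly across initial configurations is the crux. Once this is achieved, the remaining decomposition, boundary-correction, and Riemann-sum arguments should be essentially cosmetic modifications of the corresponding steps in Section~\ref{sec:dasy}, so the bulk of any future proof of the conjecture will live inside step (ii).
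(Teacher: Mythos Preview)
The statement you are addressing is a \emph{conjecture} in the paper, not a theorem; it appears in the open problems section and the paper offers no proof. So there is no ``paper's own proof'' to compare against. What the paper does say is that the main anticipated difficulty in extending the method to general $\pi$ is obtaining an analog of $\varphi_k$, since nothing as clean as Proposition~\ref{prop:fmn} is expected to be available for arbitrary patterns.

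Your proposal is not a proof either, and you are candid about this: you correctly identify step~(ii) --- the asymptotic distribution of the initial entries of a random $\pi$-avoider --- as the crux, and you explicitly flag that the inclusion--exclusion identity underlying Proposition~\ref{prop:fmn} has no obvious replacement. This matches the paper's own assessment of where the obstruction lies. Your suggested workarounds (singularity analysis of the refined generating functions $B_\pi^\sigma$, or a transfer-operator spectral-gap argument) are reasonable directions, but as you acknowledge, neither is carried out, and establishing uniform control of the dominant singularity across initial configurations is genuinely nontrivial and currently open. One further point: your decomposition requires that an unprescribed copy of $\pi$ straddling the boundary be detectable from the last $k-1$ entries of $\tau$ together with the first $k-1$ of $v$; for the pattern $k,k-1,\ldots,1$ the paper exploits the much simpler fact that a single comparison $w(t)<w(t+1)$ suffices, whereas for general $\pi$ the boundary event depends on the full relative order of up to $2k-2$ values, so the analog of the integral in Theorem~\ref{thm:precdasy} would be higher-dimensional and the order-statistic estimates correspondingly more involved. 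In short: your outline is a faithful roadmap that agrees with the paper's diagnosis of the difficulty, but it is a proposal for future work, not a proof.
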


We conjecture the following analogue of Theorem~\ref{thm:equistrong}.

\begin{conjecture}
Fix $k\geq 3$, $\pi\in S_k$, and $a\in \mathbb{Z}^+$. There is a real constant $C_{\pi,a}>0$ such that for $n\in \mathbb{Z}^+$ and $I=\{i_1,i_2,\ldots, i_a\}\subseteq [n]$ with $i_1\geq \sqrt{n}$, $i_{j+1}-i_j\geq \sqrt{n}$ for all $j$ with $1\leq j\leq a-1$, and $n-i_a\geq \sqrt{n}$, we have
\[p_\pi(I,n)=C_{k,a} p_\pi(n)\left(1+O_{\pi,a}\left(n^{-\alpha}\right)\right).\]
\end{conjecture}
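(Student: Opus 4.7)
The plan is to emulate the proof of Theorem~\ref{thm:equistrong} verbatim, replacing the three ingredients specialized to the pattern $k,k-1,\ldots,1$ by conjectural analogues for general $\pi\in S_k$. The three ingredients needed are: (I) an exponential asymptotic $p_\pi(n)=c_\pi\,n!\,r_\pi^{n}\bigl(1+O(\gamma_\pi^{n})\bigr)$ with $r_\pi>0$ and $0\le\gamma_\pi<1$, playing the role of Theorem~\ref{thm:nasy} (such statements are standard in the consecutive pattern avoidance literature and follow from transfer-operator techniques of the type used in \cite{kitaev}); (II) a joint-distribution analogue of Theorem~\ref{thm:joint}, namely a strictly positive, continuously differentiable density $\psi_\pi\colon[0,1]^2\to(0,\infty)$ such that, for $w$ uniformly random in $\mathcal{P}_\pi(\emptyset,n)$ and $m_1\ne m_2$ in $[n]$,
\[\mathbb{P}(w(1)=m_1,\ w(n)=m_2)=\tfrac{1}{n^2}\psi_\pi\!\left(\tfrac{m_1}{n},\,1-\tfrac{m_2}{n}\right)\bigl(1+O(n^{-\beta_\pi})\bigr)\]
for some $\beta_\pi>0$; and (III) the discrete order statistics concentration result Proposition~\ref{prop:conc}, which is pattern-independent and can be used as stated.

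Given (I)--(III), I would reuse the construction in the proof of Theorem~\ref{thm:equistrong}. Any $w\in\mathcal{P}_\pi(I,n)$ decomposes into $a$ \emph{pattern blocks} of length $k$ occupying positions $i_j,\ldots,i_j+k-1$ (whose relative order is forced to be $\pi$) and $a+1$ \emph{avoiding blocks} of lengths $t_1,\ldots,t_{a+1}$, each at least $\sqrt{n}-k$ by the sparseness hypothesis. I then generate a uniformly random candidate permutation in three stages: (a) pick $v_j\in\mathcal{P}_\pi(\emptyset,t_j)$ independently for each avoiding block; (b) partition $[n-ak]$ uniformly among the avoiding blocks to set their value sets; (c) successively choose $k$-element value sets for each pattern block among the remaining values. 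The candidate lies in $\mathcal{P}_\pi(I,n)$ if and only if no extra $\pi$-occurrence is created at any of the $2a$ block boundaries. Each boundary event depends only on at most $2(k-1)$ positions, so by the same binomial-coefficient estimates as in the proof of Lemma~\ref{lem:os}, its conditional probability (given the values assigned near the boundary) is a continuously differentiable function $\theta_\pi$ on $[0,1]^2$, computed up to additive error $O(n^{-1/2})$. Combining this with (II) applied to the first-and-last pair of each avoiding block, (III) to pass between block-local fractions $m_{u,j}/t_j$ and global fractions $\ell_{u,j}/n$ at cost $O_{k,a}(n^{-1/6})$ exactly as in Theorem~\ref{thm:equistrong}, and Riemann-summation against the resulting bounded, Lipschitz integrand, I would obtain
\[\mathbb{P}(\mathcal{E}_{\pi,I,n})=\int_{[0,1]^{2(a+1)}}\psi_\pi(x_1,y_1)\,\theta_\pi(y_1,x_2)\,\psi_\pi(x_2,y_2)\cdots\theta_\pi(y_a,x_{a+1})\,\psi_\pi(x_{a+1},y_{a+1})\,dx\,dy+O_{\pi,a}(n^{-\alpha}),\]
where $\mathcal{E}_{\pi,I,n}$ is the event that the candidate lies in $\mathcal{P}_\pi(I,n)$. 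This integral depends only on $k$, $\pi$, and $a$; it is strictly positive since $\psi_\pi$ is bounded below. Multiplying by the block-factorial count $\bigl(\prod_{j}p_\pi(t_j)\bigr)\cdot n!/\bigl(t_1!\cdots t_{a+1}!(k!)^a\bigr)$ and using (I) collapses all $t_j$-dependence to the factor $r_\pi^{-ak}/(k!)^a$ modulo $O(n^{-\alpha})$, yielding the conjectured statement with an explicit constant $C_{k,a}$ depending on $\pi$ and $a$ alone.

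The main obstacle is establishing (II). For $\pi=k,k-1,\ldots,1$ the explicit density $\varphi_k$, and hence the factorised joint density $\psi_k(x,y)=\varphi_k(x)\varphi_k(1-y)$, emerged from the signed inclusion--exclusion identity of Proposition~\ref{prop:fmn}, which is peculiar to monotone patterns; for general $\pi$ no such identity is known and the joint density may well be non-factorised. One route would be singularity analysis of a bivariate cluster-method generating function (in the spirit of Goulden--Jackson) marking both length and the value of the first element, extracting the joint density from a dominant singularity; this requires a uniqueness-of-dominant-singularity statement analogous to Warlimont's \cite{warlimont} for the univariate case, which we do not know in general. A second route would be a spectral argument identifying the limiting distribution of the first element as the Perron eigenvector of a suitable insert-an-element transfer operator on a Banach space of densities, with the polynomial error rate coming from a spectral gap. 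Either approach appears substantially harder than the combinatorial input used here for monotone patterns and is, to our knowledge, the only essential obstruction; once (II) is in place, the framework above delivers the conjecture in full.
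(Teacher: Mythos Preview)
The statement you are addressing is a \emph{conjecture} in the paper, not a theorem; the paper offers no proof, and explicitly lists it as an open problem. So there is no ``paper's proof'' to compare against. Your proposal is not a proof either, and you are candid about this: it is a conditional argument whose validity rests on ingredient (II), which you correctly identify as the essential missing piece. On that level your outline matches exactly the strategy the paper itself suggests when it writes that ``the main difficulty is in obtaining a nice expression for the analog of $\varphi_k(x)$.''

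That said, there is a genuine technical oversimplification in your boundary analysis that would need to be repaired even granting a version of (II). For the monotone pattern $k,k-1,\ldots,1$, the event ``no extra occurrence is created at the boundary'' reduces to the two single comparisons $w(i_j-1)<w(i_j)$ and $w(i_j+k-1)<w(i_j+k)$: a single ascent on either side kills every straddling window, because the pattern block is itself a maximal descent. This is why the paper's $\theta_k$ lives on $[0,1]^2$ and why only the \emph{last} element of the preceding block and the \emph{first} element of the following block matter. For a general $\pi\in S_k$ this collapse does not happen: a window starting at $i_j-\ell$ involves the last $\ell$ values of the preceding avoiding block together with the first $k-\ell$ values of the pattern block, and whether its relative order equals $\pi$ depends on all of these, not just on one boundary value. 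You acknowledge that the boundary event depends on up to $2(k-1)$ positions, but then assert that $\theta_\pi$ is a function on $[0,1]^2$; these two claims are inconsistent. The correct formulation would require (II$'$): an asymptotic joint distribution for the first $k-1$ \emph{and} last $k-1$ elements of a uniformly random $\pi$-avoider, and $\theta_\pi$ would then be a function on $[0,1]^{2(k-1)}$ (or one would condition on the full boundary profile and integrate accordingly). With that correction the rest of your scheme---the block decomposition, the use of Proposition~\ref{prop:conc} to pass from local to global fractions, and the Riemann-sum-to-integral step---carries over as you describe, and ingredient (I) is indeed available from \cite{kitaev} for arbitrary consecutive patterns.
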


The following random process also seems interesting. We construct a random permutation that avoids $k$-descents by inserting last elements one at a time (with value between two previous values, chosen uniformly among options that avoid $k$-descents). In this language, Theorem~\ref{thm:fmnasy} says that the distribution of the last element is given by $\varphi_k$, and Theorem~\ref{thm:joint} says that the correlation between the value of the first and last element is asymptotically small. One can ask about other properties of this random process, such as whether any two points are asymptotically uncorrelated, and also investigate the formation of other patterns.

\section{Acknowledgments}
This research was carried out under the MIT Math Department 2020 UROP+ summer research program. My mentor was Pakawut Jiradilok, whom I would like to thank for excellent mentorship and very many great ideas, as well as for his patience and understanding. I would also like to thank Wijit Yangjit for a helpful conversation on asymptotics of power series coefficients; Slava Gerovitch, David Jerison, and Ankur Moitra for organizing the UROP+ program; and the Meryl and Stewart Robertson UROP Fund for funding this project.

\bibliographystyle{plain}
\bibliography{ref}

\end{document}